\numberwithin{equation}{section}
\newtheorem{dfn}{Definition}[section]
\newtheorem{thm}[dfn]{Theorem}
\newtheorem{lem}[dfn]{Lemma}
\newtheorem{cor}[dfn]{Corollary}
\newtheorem{rem}[dfn]{Remark}
\newtheorem{prop}[dfn]{Proposition}
\title{Non triviality of the percolation threshold and Gumbel fluctuations for   Branching Interlacements}
\author{Bruno Schapira\thanks{Aix-Marseille Universit\'e, CNRS, I2M, UMR 7373, 13453 Marseille; and Universit\'e Claude Bernard Lyon 1, Institut Camille Jordan, UMR 5208. schapira@math.univ-lyon1.fr.}}
\begin{document}

\date{}
\maketitle

\begin{abstract}
We consider the model of Branching Interlacements, introduced by Zhu, which is a natural analogue of Sznitman's Random Interlacements model, where the random walk trajectories are replaced by ranges of some suitable tree-indexed random walks. We first prove a basic decorrelation inequality for events depending on the state of the field on distinct boxes. We then show that in all relevant dimensions, the vacant set  undergoes a nontrivial phase transition regarding the existence of an infinite connected component. Finally we obtain the Gumbel fluctuations for the cover level of finite sets, which is analogous to Belius' result in the setting of Random Interlacements. 
\newline
\newline
\emph{Keywords and phrases.} Branching Interlacements; branching capacity; tree-indexed random walk; percolation; phase transition; Gumbel fluctuations. 
\newline
MSC 2020 \emph{subject classifications.} 60F05, 60K35.
\end{abstract}

\section{Introduction} 
Since its introduction by Sznitman in his seminal paper~\cite{Sz10}, Random Interlacements have proven to be not only an interesting object of study in its own right, see e.g.~\cite{DC1,DC2,DC3,GPF,GPF2,Li20,LiS, NSz,PTex,Pre,Sz23,Sz23bis} for some of the most recent and spectacular progresses on this topic, but also a fundamental tool in the understanding of other models, among which random walks, especially on a torus, play of course a prominent role, see e.g.~\cite{Bel13, Bou,CTex,Li17,Pre, Rod19, Sz17,TexW}, but also loop soup percolation and the Gaussian free field~\cite{DPR, Lupu, Sz12bis, Sz16}.

In this paper we shall be interested in another model called Branching Interlacements, recently introduced by Zhu~\cite{Zhu18} (see also~\cite{ARZ}), which is defined similarly as Random Interlacements, 
except that random walks trajectories are now replaced by ranges of critical Branching random walks conditioned in a certain sense to have an infinite genealogical tree. The latter turns out to be the so called {\it infinite invariant tree} (more precisely its doubly infinite version), previously introduced by Le Gall and Lin~\cite{LGL16}, which is a rooted plane labelled random tree enjoying a wonderful property of invariance by shift on the labels and rerooting. While initially it was introduced with the purpose of studying the range of critical branching random walks, it has later found to be at the heart of a whole potential theory for branching random walks developed by Zhu~\cite{Zhu16,Zhu16b,Zhu19,Zhu21}, also later continued in~\cite{ASS23}.

As in the usual setting of Random Interlacements, the model of Branching Interlacements depends on an intensity parameter $u>0$, that governs the occupation density of the field, denoted $\mathcal I^u$, whose law is characterized by the relation
\begin{equation}\label{def.Iu1}
\mathbb P(\mathcal I^u\cap K =\emptyset) = \exp\big(-u\cdot \textrm{BCap}(K)\big), 
\end{equation}
for any finite set $K\subset \mathbb Z^d$, with $d\ge 5$, where $\textrm{BCap}(K)$, the {\it branching capacity} of $K$, is a functional, introduced in~\cite{Zhu16}, which is the natural analogue of the Newtonian capacity in the setting of critical branching random walks, see Section~\ref{sec.treeRW} for details. The fact that we take the dimension larger than or equal to five here has to do with the fact that it corresponds exactly to the situation where our tree-indexed random walks are transient, as shown in~\cite{BC,LGL16,LSS}, and is thus the natural counterpart of the restriction $d\ge 3$ in the setting of Random Interlacements.

Now, while it can be seen that the set $\mathcal I^u$ always forms a connected set, irrespective of the value of the intensity parameter, see Corollary~\ref{prop.connect} below, by far a more interesting and richer phenomenology governs the percolative behavior of its  complement, the so-called {\it vacant set} $\mathcal V^u = \mathbb Z^d\setminus \mathcal I^u$. In particular in the usual setting of Random Interlacements, some of the first efforts in the study of this model were put in  showing that it undergoes a nontrivial phase transition regarding the existence of an infinite connected component. Indeed, this has been first obtained in sufficiently high dimension in~\cite{Sz12}, before it could be extended to all relevant dimensions, in that case three and more, in~\cite{SSz}. One of the main goal of this paper will be to prove an analogous statement for Branching Interlacements, and thereby confirm R\'ath's prediction from~\cite{R}, about ten years ago, that his simple proof of the nontriviality of the phase transition could be useful in this context as well.

We will come back to this question in more details in a moment, but let us for now investigate some more basic features of our model. Indeed, one very important aspect to keep in mind concerning the sets $\mathcal I^u$ and $\mathcal V^u$, especially when compared to Bernoulli percolation, is that they are strongly correlated fields, in particular correlations decay at a polynomial speed. To state this result in more details, we first need to specify our hypotheses  concerning the jump and offspring distributions of our branching random walks: so we assume in the whole paper that the offspring distribution  is a probability measure $\mu$ on the set of integers $\mathbb N$, with mean one, and a positive and finite variance, and for simplicity we take the jump distribution to be the uniform measure on the neighbors of the origin, see  Section~\ref{sec.treeRW} for more precise definitions. 
We also let $\|\cdot \|$ be the Euclidean norm.  
\begin{prop}\label{prop.cov} 
There exists a constant $c>0$, such that for any $u>0$,  
\begin{equation}
 {\rm Cov} (\mathbf 1\{x\in \mathcal V^u\}, \mathbf 1\{y\in \mathcal V^u\}) \sim \frac{cu}{\|x-y\|^{d-4}}\cdot e^{-2u\cdot \textrm{BCap}(\{0\})}, \qquad \text{as }\|x-y\|\to \infty. 
 \end{equation}
\end{prop}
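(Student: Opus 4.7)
The plan is to exploit the defining relation \eqref{def.Iu1} directly, which reduces the computation of the covariance to an asymptotic expansion of $\textrm{BCap}(\{x,y\})$ as $\|x-y\|\to\infty$. Indeed, writing $\gamma:=\textrm{BCap}(\{0\})$ (and using translation invariance of the branching capacity of a singleton), we have
\begin{equation*}
\textrm{Cov}(\mathbf 1\{x\in\mathcal V^u\},\mathbf 1\{y\in\mathcal V^u\})=e^{-u\,\textrm{BCap}(\{x,y\})}-e^{-2u\gamma}.
\end{equation*}
So everything boils down to showing that $\textrm{BCap}(\{x,y\})=2\gamma-2\gamma^{2}\,g_B(x-y)+o(\|x-y\|^{-(d-4)})$, where $g_B$ denotes the branching Green's function, which is known (see Zhu~\cite{Zhu16,Zhu16b}, or the Le~Gall--Lin estimates) to satisfy $g_B(z)\sim c_0\|z\|^{-(d-4)}$ as $\|z\|\to\infty$ in dimension $d\ge 5$.

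To obtain the expansion of $\textrm{BCap}(\{x,y\})$, I would invoke the potential-theoretic description of branching capacity: the branching equilibrium measure $e_{\{x,y\}}$ is characterized by $g_B e_{\{x,y\}}\equiv 1$ on $\{x,y\}$, and $\textrm{BCap}(\{x,y\})=e_{\{x,y\}}(x)+e_{\{x,y\}}(y)$. By symmetry $e_{\{x,y\}}(x)=e_{\{x,y\}}(y)=:e$, and using $g_B(x,x)=g_B(y,y)=\gamma^{-1}$ the two equations $g_B(x,x)e+g_B(x,y)e=1$ collapse into $e(\gamma^{-1}+g_B(x-y))=1$, yielding the closed form
\begin{equation*}
\textrm{BCap}(\{x,y\})=\frac{2\gamma}{1+\gamma\,g_B(x-y)}.
\end{equation*}
Plugging the asymptotics of $g_B$ into this expression and Taylor expanding gives exactly the desired two-term expansion.

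Finally, combining the two ingredients, I would write
\begin{equation*}
e^{-u\,\textrm{BCap}(\{x,y\})}-e^{-2u\gamma}=e^{-2u\gamma}\bigl(e^{2u\gamma^{2}g_B(x-y)+o(g_B(x-y))}-1\bigr)=2u\gamma^{2}g_B(x-y)e^{-2u\gamma}(1+o(1)),
\end{equation*}
which, together with $g_B(x-y)\sim c_0\|x-y\|^{-(d-4)}$, produces the claimed equivalence with $c=2\gamma^{2}c_0$.

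The main obstacle is not analytic but rather a matter of having the correct tools on the shelf: one needs (i) the representation of $\textrm{BCap}$ via the branching Green's function and its equilibrium measure (with $g_B(0,0)=\gamma^{-1}$), and (ii) the precise decay $g_B(z)\sim c_0\|z\|^{-(d-4)}$. Both should be accessible from Zhu's potential theory and the Le~Gall--Lin asymptotics cited earlier; provided these are invoked in the form above, the rest is a direct Taylor expansion, and in particular no difficult coupling or decorrelation argument is needed here (the pointwise identity \eqref{def.Iu1} is already exact, not merely asymptotic).
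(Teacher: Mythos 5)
Your reduction to an asymptotic expansion of $\textrm{BCap}(\{x,y\})$ via the exact identity~\eqref{def.Iu1} is the same starting point as the paper, and your final constant $c=2\gamma^{2}c_0$ is the one the paper obtains (there $\gamma=\textrm{BCap}(\{0\})$ and $c_0=a_d$ from~\eqref{asympG}). The problem is the mechanism you propose for getting that expansion: the relation $\sum_{y\in K}G(x-y)\,e_K(y)\equiv 1$ on $K$ is a genuinely Markovian identity. In the classical random walk setting it comes from the last-exit decomposition, which relies on the strong Markov property and the reversibility of the walk. Tree-indexed random walks have neither, and this identity simply does not hold for branching capacity. What \emph{is} known, and what the paper quotes from~\cite{ASS23} at the start of Section~\ref{sec:prel}, is only the two-sided bound $c\le \sum_{y\in K}G(y-x)\,e_K(y)\le C$ with constants bounded away from $0$ and $\infty$; this is not an equality and in particular does not pin down $e_{\{x,y\}}$. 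The related claim $G(0)=\textrm{BCap}(\{0\})^{-1}$ (your $g_B(0,0)=\gamma^{-1}$), which you use to collapse the $2\times 2$ linear system, has the same Markovian origin and is not justified in the branching world. As a consequence your closed-form $\textrm{BCap}(\{x,y\})=2\gamma/(1+\gamma\,g_B(x-y))$ is not a theorem, and the fact that its first-order Taylor expansion happens to produce the right answer is not a proof.

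The paper works around precisely this obstacle. It writes $\textrm{BCap}(\{0,x\})=2e_{\{0,x\}}(0)$ by symmetry and $e_{\{0,x\}}(0)=\textrm{BCap}(\{0\})-\mathbb P(\mathcal T^0_-\cap\{0\}=\emptyset,\ \mathcal T^0_-\cap\{x\}\neq\emptyset)$, then estimates this joint probability directly by stopping the spine of the infinite invariant tree at an intermediate radius $r=\|x\|^{2/3}$, decoupling the part of the tree before the stopping time (responsible for avoiding $0$) from the part after (responsible for hitting $x$), and controlling two error terms via Proposition~\ref{Zhu.hit} and a many-to-two computation. The main term then produces $\textrm{BCap}(\{0\})^2\cdot G(x)$, which is exactly the coefficient your heuristic predicts, but the decoupling step is where the real work lies and cannot be replaced by an appeal to a nonexistent equilibrium identity. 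If you want to salvage your approach you would need to prove, rather than assume, a two-term expansion of $e_{\{0,x\}}(0)$, and doing so leads you straight to the kind of spine decomposition the paper carries out in Lemma~\ref{lem.Bcap0x}.
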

This result was stated without proof in~\cite{Zhu18}; so we shall provide one here for completeness. However, while interesting in itself, Proposition~\ref{prop.cov} is often not sufficient in practice to describe the fine properties of the model. A much more efficient result is provided by our next theorem. Given a finite and nonempty set $K\subset \mathbb Z^d$, we let ${\rm diam}(K) =1+ \max\{\|x-y\| : x,y\in K\}$, and given two sets $K_1, K_2\subset \mathbb Z^d$, we let ${\rm dist}(K_1,K_2) = \inf\{ \| x-y\| : x\in K_1, y\in K_2\}$ be the distance between $K_1$ and $K_2$. 

\begin{thm}\label{thm.cov}
There exist positive constants $c$ and $C$, such that for any $u\ge 0$, any finite and nonempty $K_1,K_2\subset \mathbb Z^d$,  
and any events $E$ and $F$ depending only on $\mathcal I^u \cap K_1$ and $\mathcal I^u \cap K_2$ respectively,  
\begin{align*}
| {\rm Cov} (E,F) | \le Cu\cdot    \left\{   \frac{{\rm BCap}(K_1) \cdot {\rm BCap}(K_2)}{{\rm dist}(K_1,K_2)^{d-4}} 
+   {\rm BCap}(K_2) \cdot e^{-c\frac{{\rm dist}(K_1,K_2)}{{\rm diam}(K_1)} }
\right\}. 
\end{align*}
\end{thm}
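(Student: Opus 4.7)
The plan is to follow the standard Poissonian decoupling scheme, adapted to the branching setting. The field $\mathcal I^u$ arises from a Poisson point process of doubly-infinite tree-indexed trajectories (considered modulo a time shift) with intensity $u\cdot \nu$, where $\nu$ is a sigma-finite trajectory measure whose restriction to trajectories whose range meets a finite set $K$ has total mass $\mathrm{BCap}(K)$. I would split this process into three independent sub-processes according to which of $K_1, K_2$ the trajectory's range meets: only $K_1$, only $K_2$, or both. Since $E$ is measurable with respect to the first and third sub-processes and $F$ with respect to the second and third, conditioning on the third being empty makes $E$ and $F$ independent, and a standard coupling with the product measure gives
\begin{equation*}
|\mathrm{Cov}(E, F)| \,\leq\, 2\,\mathbb P(\text{some trajectory meets both } K_1 \text{ and } K_2) \,\leq\, 2u\,\Lambda,
\end{equation*}
where $\Lambda := \nu(\{\omega : \omega \cap K_1 \neq \emptyset,\; \omega \cap K_2 \neq \emptyset\})$. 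The whole task therefore reduces to bounding $\Lambda$ by the bracketed expression in the statement.

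For the polynomial term I would disintegrate $\nu$, restricted to trajectories hitting $K_1$, with respect to the branching equilibrium measure $e^{\mathrm{Br}}_{K_1}$ (of total mass $\mathrm{BCap}(K_1)$): pointing each such trajectory at a canonical visit to $K_1$ realizes $\nu$-almost every such trajectory as a tree-indexed walk rooted at some $x \in K_1$ sampled according to $e^{\mathrm{Br}}_{K_1}$. Dropping the auxiliary ``avoid $K_1$ in the past'' side condition, one gets the crude bound
\begin{equation*}
\Lambda \,\leq\, \sum_{x \in K_1} e^{\mathrm{Br}}_{K_1}(x) \cdot \mathbb P^{\mathrm{Br}}_x\bigl(\text{range hits } K_2\bigr).
\end{equation*}
The branching Green function estimate $G^{\mathrm{Br}}(x,y) \leq C\|x-y\|^{4-d}$, combined with a sweeping identity of the form $\mathbb P^{\mathrm{Br}}_x(\text{hits } K_2) \leq \sum_{y \in K_2} G^{\mathrm{Br}}(x, y)\, e^{\mathrm{Br}}_{K_2}(y)$, then yields a uniform estimate $\mathbb P^{\mathrm{Br}}_x(\text{hits } K_2) \leq C\,\mathrm{BCap}(K_2)/\mathrm{dist}(K_1, K_2)^{d-4}$ for $x \in K_1$, and summing against $e^{\mathrm{Br}}_{K_1}$ produces the first term.

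The exponential term is what I expect to be the main obstacle. It should be obtained by a complementary estimate in which trajectories are parametrized by their entry into $K_2$, producing the prefactor $\mathrm{BCap}(K_2)$, and the probability that the resulting tree-indexed walk's range reaches the small set $K_1$ of diameter $r := \mathrm{diam}(K_1)$ at distance $D := \mathrm{dist}(K_1, K_2)$ is bounded by a chaining argument: any genealogical path from the entry point in $K_2$ to a point of $K_1$ must successively cross of order $D/r$ disjoint annular shells of thickness $r$ concentric around $K_1$, and Gaussian tail bounds for the jump distribution give an exponentially small factor per crossing. A union bound over the branching structure---which at the relevant mesoscopic scale grows only sub-exponentially---combines these into the claimed factor $e^{-cD/r}$. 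Carrying out this union bound over the infinite tree while preserving the exponential decay in $D/r$, and in particular handling the contribution of the branches that wander far before returning, is the main technical hurdle of the proof.
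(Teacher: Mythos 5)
Your first step is correct and matches the paper: decompose the Poisson process into the pieces according to which of $K_1,K_2$ a trajectory meets (in the paper these are $\mu_{1,1},\mu_{1,2},\mu_{2,1},\mu_{2,2}$), observe that only trajectories meeting both sets create dependence, and reduce to bounding the $\nu$-mass of trajectories hitting both. After that, however, both of your subsequent steps have genuine gaps.

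For the polynomial term, the step ``dropping the auxiliary `avoid $K_1$ in the past' side condition, one gets the crude bound $\Lambda\le\sum_{x\in K_1}e_{K_1}(x)\cdot\mathbb P_x(\text{range hits }K_2)$'' is not valid. The quantity you need to bound is $\sum_{x\in K_1}\mathbb P(\mathcal T^x_-\cap K_1=\emptyset,\ \mathcal T^x_+\cap K_2\neq\emptyset)$. These two events are \emph{not} independent: the past and future subtrees are attached to a common spine, and the conditioning that the past avoids $K_1$ tilts the law of the spine, hence of the future as well. If you literally drop the side condition you get $\sum_x\mathbb P(\text{hits }K_2)\lesssim |K_1|\cdot\mathrm{BCap}(K_2)/\mathrm{dist}^{d-4}$, with $|K_1|$ in place of $\mathrm{BCap}(K_1)$, which is much weaker and does not give the theorem. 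The core of the paper's proof (Lemma~\ref{lem.AB}) is precisely to establish the approximate factorization $\mathbb P(\mathcal T^x_-\cap K_1=\emptyset,\ \mathcal T^x_+\cap K_2\neq\emptyset)\approx e_{K_1}(x)\cdot C\,\mathrm{BCap}(K_2)/\mathrm{dist}^{d-4}$ by surgery on the spine: run it until it exits a ball $B(0,2DR)$ around $K_1$, split the visit to $K_2$ into the part of $\mathcal T^x_+$ rooted after that exit (conditionally independent of the past, giving the main term) and the part rooted before (the error). This decorrelation is the real content of the theorem and your proposal simply assumes it.

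For the exponential term, the proposed mechanism is wrong. The probability that a tree-indexed walk started near $K_2$ ever hits the distant small set $K_1$ is of polynomial order $\mathrm{BCap}(K_1)/\mathrm{dist}^{d-4}$, not exponentially small, so no chaining argument over annular shells with ``Gaussian tails per crossing'' can produce $e^{-c\,\mathrm{dist}/\mathrm{diam}}$ as a bound for a hitting probability. The exponential factor in the theorem is not a complementary hitting estimate at all---it is the quantitative \emph{error} in the decorrelation above: it bounds the contribution of realizations in which the spine lingers near $K_1$ for an abnormally long time before escaping (so that the past-avoidance event and the future-hitting event remain entangled). In the paper this is controlled by conditioning on the spine's position, reversing time, and combining Kolmogorov's survival estimate for critical Bienaym\'e--Galton--Watson trees with the exponential tail of the dwell time of a simple random walk in a ball, yielding a bound of order $e^{-c\,\mathrm{dist}^2/\mathrm{diam}^2}$, which is then weakened to the stated form. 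The branching-structure ingredient (Kolmogorov's estimate) is essential and absent from your sketch.

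Finally, note that even once Lemma~\ref{lem.AB} is in hand, it is asymmetric in $K_1,K_2$ (it requires $\mathrm{dist}(K_1,K_2)\gtrsim\mathrm{diam}(K_1)$), so the bound on the second piece $\mathbb Q(\mu_{2,1}\neq 0)$ requires an additional dyadic decomposition of $K_2$ into pieces satisfying the diameter/distance hypothesis; your sketch does not address this step.
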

In the usual setting of Random Interlacements, an analogous inequality was proved in~\cite{Sz10} without the second term in the upper bound. However, in our applications, this error term will appear to be completely innocuous. Note that it is not symmetric in $K_1$ and $K_2$, but we are free to reverse their roles and take the smallest of the two possible choices. 
We will now investigate some applications of this result in two directions. On one hand it will allow us to answer the most basic questions regarding the percolative properties of the vacant set, as already mentioned above, and furthermore, it will also enable us to derive the Gumbel fluctuations for the cover level of finite sets.

To describe these results in more details, one needs an additional tool, regarding translation invariance and ergodicity of the fields. We define the map $t_x$ on $\mathbb Z^d$ as the translation by $x\in \mathbb Z^d$. A first application of Theorem~\ref{thm.cov} is the following result.  
\begin{prop}\label{cor.ergodic}
The law of the random set $\mathcal I^u$ is invariant by all the translations $t_x$, for $x\in \mathbb Z^d$, and these maps are ergodic. 
\end{prop}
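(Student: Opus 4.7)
The plan is to handle the two claims separately: translation invariance is essentially immediate from the defining identity \eqref{def.Iu1}, while ergodicity will be derived from the decorrelation estimate of Theorem~\ref{thm.cov} via mixing.

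For translation invariance, the key observation is that the law of a random subset of $\mathbb Z^d$ is determined by its hitting probabilities on finite sets, since all finite-dimensional marginals can be recovered from these probabilities by inclusion–exclusion. Because both the offspring distribution and the (uniform) jump distribution are translation invariant, so is the branching capacity, hence for every $x \in \mathbb Z^d$ and every finite $K \subset \mathbb Z^d$,
\[
\mathbb P(t_x(\mathcal I^u) \cap K = \emptyset)
\;=\; \mathbb P(\mathcal I^u \cap t_{-x}K = \emptyset)
\;=\; e^{-u\cdot {\rm BCap}(t_{-x}K)}
\;=\; e^{-u\cdot {\rm BCap}(K)}
\;=\; \mathbb P(\mathcal I^u \cap K = \emptyset),
\]
so $t_x(\mathcal I^u) \stackrel{d}{=} \mathcal I^u$.

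For ergodicity, the plan is to prove the stronger mixing property of the $\mathbb Z^d$-action: for any two events $A, B$ in the $\sigma$-algebra generated by $\mathcal I^u$,
\[
\lim_{\|x\|\to\infty} \mathbb P(A \cap \tau_x B) \;=\; \mathbb P(A)\,\mathbb P(B),
\]
where $\tau_x$ denotes the shift by $x$ acting on configurations. This implies ergodicity, since any shift-invariant event then satisfies $\mathbb P(A) = \mathbb P(A)^2$, forcing $\mathbb P(A) \in \{0,1\}$. By a standard $L^1$-approximation (cylinder events form an algebra generating the whole $\sigma$-algebra, and translation invariance of the law transports the approximation of $B$ to an approximation of $\tau_x B$), it suffices to check mixing when $A$ depends only on $\mathcal I^u \cap K_1$ and $B$ only on $\mathcal I^u \cap K_2$, for some finite $K_1, K_2$. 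In that case $\tau_x B$ depends on $\mathcal I^u \cap (K_2 + x)$, and applying Theorem~\ref{thm.cov} with $K_2$ replaced by $K_2 + x$, together with translation invariance of ${\rm BCap}$, yields
\[
|{\rm Cov}(A, \tau_x B)|
\;\le\; Cu \left\{\frac{{\rm BCap}(K_1)\,{\rm BCap}(K_2)}{{\rm dist}(K_1, K_2+x)^{d-4}}
+ {\rm BCap}(K_2)\,\exp\!\left(-c\,\frac{{\rm dist}(K_1,K_2+x)}{{\rm diam}(K_1)}\right)\right\}.
\]
Since $d\ge 5$ and ${\rm dist}(K_1, K_2+x) \to \infty$ as $\|x\|\to\infty$, both terms vanish, which proves mixing on cylinders.

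I do not anticipate any real obstacle: the only mildly delicate step is the reduction from arbitrary events to cylinders, but this is routine once one combines the generating property of cylinder events with the translation invariance already established in the first part. The decorrelation estimate of Theorem~\ref{thm.cov} does all the work and is, in fact, comfortably stronger than what ergodicity requires.
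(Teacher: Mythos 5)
Your proof is correct and follows essentially the same route as the paper, which simply defers to Sznitman's proof of the corresponding statement in the Random Interlacements setting: translation invariance comes from translation invariance of the branching capacity (you argue directly through the hitting probabilities \eqref{def.Iu1}, while Sznitman works at the level of the intensity measure $\nu$ of the Poisson point process, but these are two faces of the same fact), and ergodicity is obtained by proving mixing, which reduces to the decorrelation of local events afforded by Theorem~\ref{thm.cov}. The reduction to cylinder events and the deduction of ergodicity from mixing are exactly as you describe.
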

The proof is identical to the proof of the corresponding result for Random Interlacements, see~\cite[Theorem 2.1]{Sz10}, and will not be reproduced here.  
This result has some important applications. 
As in~\cite{Sz10}, we denote by $\widetilde{\mathcal I}^u$ the subgraph of $\mathbb Z^d$, whose vertex set is $\mathcal I^u$, and whose edge set is the set of edges crossed by one of the branching random walks generating $\mathcal I^u$, see Section~\ref{sec.BI}. Similarly we let $\widetilde{\mathcal V}^u$ be the graph with vertex set $\mathcal V^u$, and edges between any pair of neighboring vertices. 

\begin{cor}\label{prop.connect}
One has for any $u>0$,
\begin{enumerate}
    \item $p(u)=\mathbb P(\widetilde{\mathcal V}^u \text{ admits an infinite connected component})\in \{0,1\}$. 
\item The graph $\widetilde{\mathcal I}^u$ is almost surely connected. 
\item Letting $N(u)$ be the number of connected components of $\widetilde{\mathcal V}^u$, almost surely $N(u)\in \{0,1\}$. 
\end{enumerate}
\end{cor}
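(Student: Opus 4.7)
I would treat the three assertions in turn, following the strategy of Sznitman~\cite{Sz10} from the Random Interlacements setting. Part~(1) is immediate: the event that $\widetilde{\mathcal V}^u$ admits an infinite connected component is invariant under all translations $t_x$, so the conclusion follows at once from the ergodicity statement in Proposition~\ref{cor.ergodic}.

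For Part~(2), my starting point is that each branching random walk trajectory in the Poisson configuration defining $\mathcal I^u$ produces a connected and a.s.~infinite subgraph of $\widetilde{\mathcal I}^u$: the range is connected because the steps are nearest-neighbor, and infinite because tree-indexed walks in $d \ge 5$ are transient. Hence every connected component of $\widetilde{\mathcal I}^u$ is an infinite union of whole trajectories, and if $K$ denotes the number of components, Proposition~\ref{cor.ergodic} forces $K$ to be a.s.~a constant in $\{1, 2, \ldots, \infty\}$. To prove $K = 1$, I would consider the \emph{trajectory graph} whose vertices are the trajectories in the Poisson configuration and whose edges join pairs of trajectories with intersecting ranges, and show that it is a.s.~connected. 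Concretely, for any two finite boxes $B_1, B_2$ the intensity of trajectories meeting both is strictly positive (by inclusion--exclusion on branching capacities, or equivalently positivity of the branching Green function), and an exhaustion argument over an increasing sequence of boxes then links any two trajectories by a finite chain of pairwise intersecting ones.

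Part~(3) calls for a Burton--Keane style argument. Assume for contradiction that $\mathbb P(N(u) \ge 2) > 0$; since the event $\{N(u) \ge 2\}$ is translation invariant, ergodicity upgrades this to probability one. Call a vertex $x \in \mathcal V^u$ a \emph{trifurcation} if its removal splits its vacant cluster locally into at least three infinite pieces. By translation invariance, the expected number of trifurcations in the box $\Lambda_L$ of side $L$ is of order $L^d p$, where $p = \mathbb P(0 \text{ is a trifurcation})$, while the classical Burton--Keane combinatorial lemma (counting trifurcations against boundary flags) caps this number by $O(L^{d-1})$. This forces $p = 0$, and the sought contradiction is obtained by establishing instead that $p > 0$ under our standing assumption $N(u) \ge 2$ a.s.

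The main obstacle is precisely this last point: proving $p > 0$ amounts to an insertion-tolerance, or local finite-energy, property for the Branching Interlacements field. I would proceed by a surgical modification inside a fixed box $B$: granted that two infinite vacant components come close to $B$ (a positive probability event), one should delete an appropriate finite collection of branching random walk pieces touching $B$ so as to expose a trifurcation, and then control the Radon--Nikodym cost of this modification against the underlying Poisson law on trajectories. Theorem~\ref{thm.cov} should be instrumental in ensuring that only few pieces interact nontrivially with $B$, so that the modification is both well defined and quantifiable. This step is more delicate than in the Random Interlacements case, because whole subtrees of the branching process may enter $B$ in intricate ways, and controlling their local geometry requires genuine input from the branching potential theory developed by Zhu.
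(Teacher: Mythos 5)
Your proposal follows exactly the route the paper has in mind: parts (1)--(2) adapt Sznitman~\cite{Sz10} (ergodicity gives the $0$--$1$ law, positivity of the intensity of trajectories meeting two boxes plus an exhaustion argument gives connectedness of $\widetilde{\mathcal I}^u$), and part (3) adapts Teixeira~\cite{Tex} via a Burton--Keane style counting of trifurcations combined with a local deletion argument on the Poisson configuration; the paper says precisely that these proofs carry over and omits the details. One small correction: Theorem~\ref{thm.cov} enters this corollary only indirectly, through the proof of ergodicity in Proposition~\ref{cor.ergodic}, not in the local-modification step of part (3), where what is really used is that $\nu(W_B^*)=\mathrm{BCap}(B)<\infty$ so that only finitely many trajectories touch a fixed box.
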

Note that finer connectivity properties of the graph $\widetilde{\mathcal I}^u$ were proved in~\cite{PZ}, when $\mu$ is a Geometric distribution. 
In the setting of random interlacements, an analogue of the two first items of  Corollary~\ref{prop.connect} were proved in the seminal paper~\cite{Sz10},
while the third one is due to Teixeira~\cite{Tex}. As for Proposition~\ref{cor.ergodic}, the proof in our setting can be easily adapted from theirs, and as such will be omitted. 

From Corollary~\ref{prop.connect}, and basic monotonicity properties, we know that as the parameter $u$ increases, the vacant set undergoes a phase transition, going from a regime with an (almost surely unique) infinite connected component, to a regime with almost surely no infinite connected component. The transition occurs at the value 
$$u_*=\inf\{u:p(u)=0\} \in [0,\infty]. $$
Our next result addresses the question of knowing whether or not this value is nontrivial, i.e. different from $0$ and infinity\footnote{The answer to this question has been also announced in~\cite{Zhu18}, but to our knowledge the proof never appeared so far.}. Classically, we say that $\mu$ has a finite exponential moment, if there exists $c>0$, such that $\sum_{k\ge 0} e^{ck}\mu(k)<\infty$. 
\begin{thm}\label{thm:pt} Assume that $\mu$ has a finite exponential moment. Then for any $d\ge 5$, 
$$0<u_*<\infty. $$   
\end{thm}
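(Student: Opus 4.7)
The theorem asserts two separate inequalities, which we treat independently. In both cases the strategy is to work at a fixed large scale $L$, to renormalize into a comparison with Bernoulli site percolation on $L\mathbb Z^d$, and to justify this comparison via the decorrelation estimate of Theorem~\ref{thm.cov}. Monotonicity of $u\mapsto \mathcal I^u$ (inherited from the Poissonian construction) guarantees that $u_*$ is a single transition value, so it suffices to exhibit one small value of $u$ at which percolation holds and one large value at which it does not.

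For the upper bound $u_*<\infty$, I would set up a Peierls-type contour argument. Partition $\mathbb Z^d$ into boxes of side $L$ and declare a box $B$ \emph{bad} if $B\cap\mathcal V^u\neq\emptyset$. A union bound together with~\eqref{def.Iu1} gives $\mathbb P(B\text{ bad})\le L^d\exp(-u\,\bca(\{0\}))$, which tends to $0$ as $u\to\infty$. An infinite vacant cluster through $0$ forces, for each $n$, a $\ast$-connected chain of $n$ bad boxes starting at the origin's box. Iterating Theorem~\ref{thm.cov} along a subchain of well-spaced boxes (keeping one box out of every $k$, with $k$ chosen so that the exponential error term $e^{-c\,\mathrm{dist}/\mathrm{diam}}$ and the polynomial term $\bca(K_1)\bca(K_2)/\mathrm{dist}^{d-4}$ are both dominated by the product of the single-box probabilities) yields a bound of the form $(C\,p_L(u))^n$ on the probability that such a chain is all bad. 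Summing this against the $C'^{\,n}$ possible $\ast$-paths of length $n$, Borel--Cantelli closes the argument once $L$ and then $u$ are taken large enough; by Corollary~\ref{prop.connect}(i) this forces $p(u)=0$, hence $u_*<\infty$.

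For the lower bound $u_*>0$, I would follow R\'ath's approach from~\cite{R}. Fix a large scale $L$ and declare a box $B$ \emph{good} if $\widetilde{\mathcal V}^u\cap B$ contains a path connecting two opposite faces of $B$. The plan is then: (a) show that $\mathbb P(B\text{ good})\to 1$ as $u\to 0$ (for $L$ fixed); (b) use Theorem~\ref{thm.cov} to decorrelate the good-events on well-separated boxes; and (c) invoke the Liggett--Schonmann--Stacey domination theorem (or a direct iterative block construction) to stochastically dominate the good-box configuration by a supercritical Bernoulli site percolation on $L\mathbb Z^d$, whose infinite cluster of good boxes can then be chained into an infinite vacant cluster by ensuring neighbouring crossings overlap. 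The core input is (a): in dimension $d\ge 5$ one has $\bca(B)\asymp L^{d-4}$, which is the branching counterpart of the usual capacity estimate, so the expected number of trajectories in the Poissonian construction of $\mathcal I^u$ that visit $B$ is $u\cdot\bca(B)\asymp u L^{d-4}$. Taking $u\le \varepsilon L^{-(d-4)}$ makes this small, and with high probability $B$ is then entirely unvisited and trivially good.

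\textbf{Main obstacle.} The delicate point is the quantitative form of step~(a) that is needed to make step~(c) go through: one must check that even when a handful of branching trajectories do enter $B$, a vacant face-to-face crossing still survives. This requires controlling the geometry of the ranges of infinite invariant tree-indexed walks of~\cite{LGL16, Zhu16} inside $B$, and it is here that the finite exponential moment assumption on $\mu$ is used, in order to rule out anomalously dense subtrees that could block all crossings. Once this quantitative version of (a) is in hand, steps (b) and (c) are relatively soft and parallel the corresponding arguments in~\cite{R} and~\cite{SSz}, with Theorem~\ref{thm.cov} playing the role of the standard Random Interlacements decorrelation inequality.
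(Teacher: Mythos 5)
Your high-level plan (renormalization plus decorrelation via Theorem~\ref{thm.cov}) is the right starting point, but both halves of the argument, as you have sketched them, contain gaps that the paper must and does work harder to close.

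For the upper bound $u_*<\infty$, the paper does \emph{not} iterate Theorem~\ref{thm.cov} along a chain of same-scale boxes. It instead uses a genuinely multi-scale reduction: Lemma~\ref{lem:*path} shows that a vacant crossing of the annulus $S(0,L_n-1)\to S(0,2L_n)$ forces all $2^n$ leaves of some proper embedding $\mathcal M\in\Lambda_{n,0}$ to lie in $\mathcal V^u$, and then one uses the exact identity $\mathbb P(\mathcal X_{\mathcal M}\subset\mathcal V^u)=\exp(-u\,\bca(\mathcal X_{\mathcal M}))$ together with a direct lower bound $\bca(\mathcal X_{\mathcal M})\ge c\cdot 2^n$ (from \eqref{lowerBCap} and the hierarchical separation in Lemma~\ref{lem:Lambdadist}). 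No decorrelation is needed at all. Your Peierls scheme, which tries to peel off one bad box at a time using Theorem~\ref{thm.cov}, runs into the following problem: for boxes of side $L$, the covariance error is of order $u\,\bca(K_1)\bca(K_2)/\mathrm{dist}^{d-4}\asymp u\,L^{d-4}/k^{d-4}$ for two boxes at distance $kL$, which grows linearly in $u$ while the individual bad-box probability $\mathbb P(B\cap\mathcal V^u\neq\emptyset)$ decays exponentially in $u$; the error term therefore dominates the quantity you are trying to estimate, and the iteration does not close. This is precisely why Sznitman, Sidoravicius--Sznitman, and R\'ath use the hierarchical construction rather than a single-scale contour argument.

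For the lower bound $u_*>0$, your step (c) is problematic on its own terms: Liggett--Schonmann--Stacey domination requires summable correlations, and the polynomial covariance decay of Theorem~\ref{thm.cov} (rate $d-4$, against capacities of order $L^{d-4}$) is not strong enough for this to apply at a fixed scale; R\'ath's proof, which you cite, is again multi-scale and does not go through LSS. But the more serious gap is that you have not identified the actual technical crux that the paper's proof is organized around. The paper works in a two-dimensional subspace $F$ and, via planar duality, reduces $u_*>0$ to showing that $\mathbb P(\mathcal N_{\mathcal M}(\mathcal I^u)\ge 2^n)$ decays faster than $\mathcal C_d^{-2^n}$, where $\mathcal N_{\mathcal M}$ counts the number of frames $S_F(\mathcal M(m),L_0-1)$ hit by the interlacement set. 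By the Poissonian structure \eqref{rep.IuK} and Campbell's formula, this reduces to bounding an exponential moment of the number of frames hit by a \emph{single} random walk indexed by the infinite invariant tree (Proposition~\ref{prop.main}). For ordinary random walk this exponential moment is easy, since the number of distinct well-spaced sets visited is dominated by a geometric random variable via the strong Markov property; tree-indexed random walks do not have the Markov property, and proving the estimate is the main contribution of the paper. This is done by a delicate induction using Proposition~\ref{Zhu.hit2} to decompose a branching random walk conditioned to hit a set along its path of first visit, together with the auxiliary simple-random-walk estimates in Lemmas~\ref{lem:main2} and~\ref{lem:SRW}, and this is also where the finite exponential moment assumption on $\mu$ enters. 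Your ``main obstacle'' paragraph gestures in a loosely related direction (controlling how densely a branching trajectory can fill a box), but the correct reduction is to a large-deviations estimate for $\mathcal N_{\mathcal M}(\mathcal T^z)$, not to a local geometric statement about surviving crossings.
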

Most of this paper will be dedicated to proving this result. The main issue concerns the lower bound (especially in low dimension); indeed the upper bound can be shown by a straightforward adaptation of the arguments from~\cite{Sz12} or~\cite{R}. While trying to adapt also the proof of~\cite{R} for the lower bound, one is left with showing that, given some sparse set of two-dimensional spheres, the number of those visited by a single random walk indexed by the infinite invariant tree, has sufficiently high exponential moments. In the usual setting of Random Interlacements, this could be deduced relatively easily from the Markov property of random walks, but the problem here is that our tree indexed random walks do not enjoy this fundamental property. To overcome this issue we use some intricate  induction argument, based on the fact that the law of a critical branching random walk  conditioned on hitting a given set can be described precisely in terms of a path of first visit and a set of so-called adjoint trees attached both to the right and to the left of all its vertices, see  Proposition~\ref{Zhu.hit2} and the remark following it. The difficulty then is two-fold. First we need to decorrelate the trees on the right and on the left of this path, and once this is done, we need to deal with a problem purely about simple random walks, namely one needs to control some complicated functional of its full trajectory, conditionally on its final position. As before we manage to decorrelate both by performing some delicate surgery on the trajectory of the walk.

We now come to our second main application of  Theorem~\ref{thm.cov}, which concerns the cover level of finite sets. Given a finite $K\subset \mathbb Z^d$, we let 
$$M(K) = \inf\{u \ge 0 : K\subset \mathcal I^u\}. $$
The next theorem is a complete analogue of  Belius' result in the setting of Random Interlacements~\cite{Bel12}. 

\begin{thm}\label{thm.cover}
There exist constants $c,c'$, such that for any finite nonempty $K\subset \mathbb Z^d$, 
$$\sup_{z\in \mathbb R} \Big|\mathbb P\Big(\rm{BCap}(\{0\})\cdot M(K) -\log |K|\le z\Big) -  \exp(-e^{-z})\Big| \le c|K|^{-c'}. $$ 
\end{thm}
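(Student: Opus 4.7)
Setting $\gamma := \text{BCap}(\{0\})$ and $N_u := |K\cap \mathcal V^u|$, the event $\{M(K)\le u\}$ coincides with $\{N_u=0\}$, and the marginal formula~\eqref{def.Iu1} gives $\mathbb E[N_u] = |K|\, e^{-u\gamma}$. With the reparametrization $u_z := (\log|K|+z)/\gamma$ one has $\mathbb E[N_{u_z}]=e^{-z}$, and the theorem reduces to showing
\[
\sup_{z\in\mathbb R}\bigl|\mathbb P(N_{u_z}=0) - \exp(-e^{-z})\bigr| \;=\; O(|K|^{-c'}).
\]
Outside the window $|z|\le C\log|K|$ both quantities are automatically within $|K|^{-c'}$ of the same degenerate value, so it suffices to bound the total variation distance between $\mathcal L(N_{u_z})$ and $\text{Poisson}(e^{-z})$ inside the window.

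I would establish the Poisson approximation by factorial moments. Applying~\eqref{def.Iu1} again,
\[
\mathbb E\bigl[(N_{u_z})_k\bigr] \;=\; \sum_{\substack{x_1,\dots,x_k\in K\\ \text{distinct}}}\exp\bigl(-u_z\cdot \text{BCap}(\{x_1,\dots,x_k\})\bigr),
\]
and matching this against the Poisson target $e^{-kz}$ reduces the problem to a sharp near-additivity estimate for branching capacity of the form
\[
k\gamma - C\sum_{i<j}\frac{1}{1+\|x_i-x_j\|^{d-4}} \;\le\; \text{BCap}(\{x_1,\dots,x_k\}) \;\le\; k\gamma,
\]
together with a uniform strict gap $\text{BCap}(\{x,y\})\ge \gamma+\delta$ for $x\ne y$. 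The upper bound is subadditivity, and the lower bound is the branching analogue of classical Newtonian capacity asymptotics, with exponent $d-4$ in place of $d-2$ reflecting the slower decay of the branching Green function and matching the power appearing in Proposition~\ref{prop.cov}. I would derive it from Zhu's variational/probabilistic characterization of $\text{BCap}$ in~\cite{Zhu16}, testing against the sum of pointwise equilibrium measures and controlling the off-diagonal energy through branching Green function estimates.

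Given such a capacity estimate, I would split the factorial-moment sum according to whether all pairs $x_i,x_j$ of the configuration are at distance at least $r:=|K|^{\alpha}$ for a small $\alpha>0$. Using the worst-case bound $\sum_{y\in K,\,y\ne x}\|x-y\|^{-(d-4)}\le C|K|^{4/d}$ and $4/d\le 4/5$, the well-separated contribution evaluates to $(e^{-z})^k\bigl(1+O(|K|^{-c'})\bigr)$; the clustered contribution is combinatorially of order $k^2|K|^{k-1}r^d$ and is crushed by the extra capacity gap $\delta$, since this gap contributes an additional factor $|K|^{-\delta/\gamma}$ that dominates $r^d = |K|^{\alpha d}$ once $\alpha$ is chosen small enough. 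The truncated Bonferroni identity at $k_0:=\lceil C\log|K|\rceil$, compared against the Taylor expansion of $\exp(-e^{-z})$, then yields the claimed uniform bound. The principal obstacle is the capacity near-additivity itself: unlike in the random-interlacements setting of~\cite{Bel12} where classical potential theory for simple random walk delivers the analogous statement, here the tree-indexed structure underlying the definition of $\text{BCap}$ makes even the two-point case $\text{BCap}(\{x,y\}) = 2\gamma - O(\|x-y\|^{-(d-4)})$ a nontrivial computation, and propagating it to $k$-point configurations with explicit polynomial error (rather than merely qualitative $o(1)$ asymptotics) is where the technical work will concentrate.
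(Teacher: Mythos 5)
Your proposal replaces the paper's argument --- which never uses factorial moments --- by a Poisson-approximation route via truncated Bonferroni, and the two methods are genuinely different. The capacity inputs you identify are sound: subadditivity gives the upper bound, the lower bound
$\mathrm{BCap}(\{x_1,\dots,x_k\}) \ge k\gamma - C\sum_{i<j}\|x_i-x_j\|^{4-d}$ follows easily from
$e_K(x_i) \ge \gamma - \sum_{j\ne i}\mathbb P(\mathcal T^{x_i}_-\ni x_j)$ together with Corollary~\ref{cor.hit}, the two-point asymptotic is Lemma~\ref{lem.Bcap0x}, and the gap is Lemma~\ref{strict.bcap}. So the ``principal obstacle'' you flag is in fact the easy part. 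However, the paper does not need the $k$-point near-additivity at all: it instead conditions on the \emph{random} set $K_\varepsilon$ of late points, shows $K_\varepsilon$ is of size $\approx|K|^\varepsilon$ and automatically well separated with high probability (Lemmas~\ref{lem.sumKeps},~\ref{lem.goodKeps}), and then compares $\mathbb P(M(K_\varepsilon)\le u)$ to $\mathbb P(U_0\le u)^{|K_\varepsilon|}$ directly by an induction on $|K_\varepsilon|$ powered by Theorem~\ref{thm.cov} (Lemma~\ref{lem.Gumbel}). This reduction to a small well-separated set is exactly what avoids all the combinatorics you are wrestling with.

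There is a concrete gap in your scheme: the truncation $k_0=\lceil C\log|K|\rceil$ cannot cover the required window. You need the bound for all $z\ge -\log|K|$ roughly, i.e.\ for Poisson means $\lambda=e^{-z}$ as large as a small positive power of $|K|$. For $z\sim-(\varepsilon/2)\log|K|$ one has $\lambda\sim|K|^{\varepsilon/2}\gg\log|K|$, and the truncated alternating sum $\sum_{k\le k_0}(-1)^k\lambda^k/k!$ is then nowhere near $e^{-\lambda}$ --- the Bonferroni tail $\lambda^{k_0+1}/(k_0+1)!$ is astronomically large rather than small. Chebyshev (or a second-moment argument) on the complementary range only gives $\mathbb P(N_{u_z}=0)\lesssim 1/\lambda$, which is not $O(|K|^{-c'})$ for $\lambda$ of constant order, so there is a mid-range of $z$ that neither your Bonferroni nor a crude moment bound controls. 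Pushing $k_0$ up to order $\lambda$ makes the clustered-configuration combinatorics intractable. Two smaller issues: (i) the bound $\sum_{y\in K}\|x-y\|^{-(d-4)}\lesssim|K|^{4/d}$ is a statement about the worst-case geometry of $K$ and is not the relevant quantity for your well-separated tuples, where the correction is $k^2/r^{d-4}$; (ii) your clustered bound silently assumes a single clustered pair and $\mathrm{BCap}\ge(k-1)\gamma+\delta$ minus manageable corrections, but verifying this for configurations with several near-coincident points (and controlling the interactions with the far points) needs more care than ``monotonicity plus the two-point gap.'' The conditioning on $K_\varepsilon$ used in the paper removes clustered configurations with high probability and shrinks the effective set to size $|K|^\varepsilon$ so that the elementary approximation $|(1-v/n)^n-e^{-v}|\lesssim 1/n$ suffices; it is both shorter and closes all these loose ends.
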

We note that a similar result is expected to hold as well for the cover time of a torus $(\mathbb Z/N\mathbb Z)^d$ by a critical branching random walk, as is the case for usual random walks~\cite{Bel13}. However, at the moment, only the concentration has been obtained by Zhu~\cite{Zhu18}, while the study of fluctuations still appears as a challenging problem.

The paper is organized as follows. In the next section, we give all relevant definitions, recall some important results of Zhu on hitting probabilities, and derive an original and basic monotonicity result for the Branching capacity, Lemma~\ref{strict.bcap}. 
Next, we prove Proposition~\ref{prop.cov} in Section~\ref{sec:prop.cov} and Theorem~\ref{thm.cov} in Section~\ref{sec:thm.cov}. The proof of our main result, Theorem~\ref{thm:pt}, is given in Section~\ref{sec:thm:pt}. Finally we prove Theorem~\ref{thm.cover} in Section~\ref{sec:thm.cover}. 

\section{Preliminaries}
\subsection{Some notation}
Given two nonnegative functions $f$ and $h$, we write $f\lesssim h$ if there exists a constant $C>0$, such that $f(x)\le Ch(x)$, for all $x$,
and $f\asymp h$ if both $f\lesssim h$ and $h\lesssim f$. We write $f\sim h$, if $h$ is positive and $f(x)/h(x)\to 1$, as $x\to \infty$. 

Given $r>0$, and $x\in \mathbb Z^d$, we let $B(x,r)= \{y\in \mathbb Z^d : \|y-x\|\le r\}$, and for $U\subset \mathbb Z^d$, we let 
$\partial U = \{y\in U : \exists z \in \mathbb Z^d\setminus U \text{ with } \|z-y\|=1\}$ denote its 
inner boundary.

We let $\mathbb P_x$ denote the law of a simple random walk starting from $x$, and sometimes drop $x$ from the notation when it is the origin. Given a finite set $K\subset \mathbb Z^d$, we let $H_K$ be the first hitting time of $K$ by a simple random walk, and $H_K^+$ its first return time to $K$. 
Then we consider 
$$\widetilde e_K(x) = \mathbb P_x(H_K^+ = \infty),$$ 
and recall that the Newtonian capacity of $K$ is by definition $\textrm{Cap}(K) = \sum_{x\in K} \widetilde e_K(x)$, see e.g.~\cite{LL} for details. In particular we shall use that $\textrm{Cap}(B(0,r))\lesssim R^{d-2}$.

Denote by $\mathcal T_c$ a  $\mu$-Bienaym\'e-Galton-Watson tree, and for $n\ge 0$, denote by $Z_n$ its number of vertices at generation $n$. We recall Kolmogorov's estimate, see~\cite[Theorem 1 p.19]{AN}, 
\begin{equation}\label{Kolmogorov}
\mathbb P(Z_n\neq 0) \sim \frac{2}{\sigma^2n}, \quad \text{as }n\to \infty,
\end{equation}
where $\sigma^2 = \sum_{k\ge 1}k(k-1)\mu(k)$ denotes the variance of $\mu$. 

\subsection{The infinite invariant tree}
The $\mu$-infinite invariant tree $\mathcal T$ (or infinite invariant tree for short) is a plane labelled tree defined as follows. First the number of offspring of the root, denoted  $\varnothing$, is distributed according to $\tilde \mu(i)= \mu(i-1)$, for all $i\ge 1$. Its first child is a special vertex, while the others are normal. Normal vertices have a number of offspring sampled independently according to $\mu$ and only give birth to normal vertices. Special vertices have a number of offspring sampled according to $\mu_{\textrm{sb}}(i) = i\mu(i)$, for $i\ge 0$. One of its children chosen uniformly at random is declared special, while others are normal. The set of special vertices, together with the root, form an infinite line of vertices called the spine. We label vertices on the right of the spine according to depth-first search order, starting from the root, which has label $0$. On the other hand we label vertices on the left of the spine, including those on the spine,  according to depth-first search from infinity. The subtree formed by vertices with negative labels is called the past tree, and denoted $\mathcal T_-$, while vertices with nonnegative labels is a forest, denoted $\mathcal T_+$. The forest $\mathcal T_+$ enjoys a fundamental property of invariance in law by shift of the labels and rerooting, which was first identified by Le Gall and Lin in~\cite{LGL16} and then extended to the full tree $\mathcal T$  independently  in~\cite{BW22} and~\cite{Zhu18}. More precisely the map which to each vertex with label $n$ assigns the new label $n+1$, and reroot the tree at the vertex with new label $0$ (so formerly the one which had label $-1$), leaves the tree $\mathcal T$ invariant in law, and the same holds for its inverse map, see Figure~\ref{fig.tree}.

\begin{figure}[ht!]\label{fig.tree}
	\begin{center}
		\includegraphics[width=0.31\textwidth]{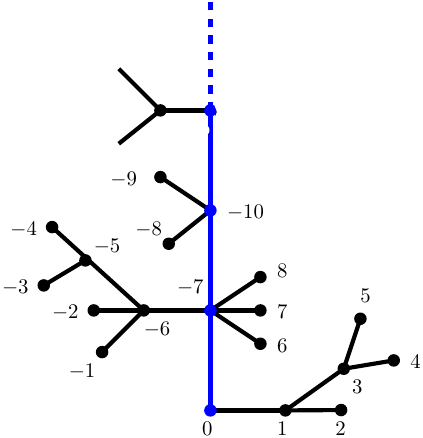}\hspace{0.02\textwidth}
		\includegraphics[width=0.31\textwidth]{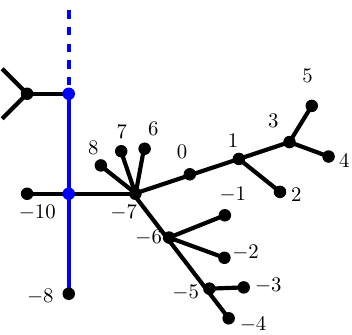}\hspace{0.02\textwidth}
	\includegraphics[width=0.31\textwidth]{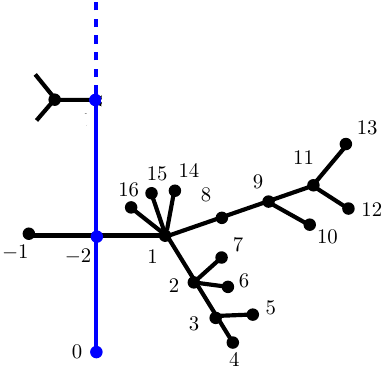}
	\end{center}
	\caption{An infinite tree rooted at $0$, seen from $-8$ and relabelled.}
\end{figure}
An important property of the tree $\mathcal T$ is that the number of offspring in the past of any special vertex has law $\widetilde \mu$ given by $\widetilde \mu(i) = \sum_{k\ge i+1} \mu(k)$, for $i\ge 0$, and similarly for its number of offspring in the future (but note that both numbers are not independent in general, except in the very special case when $\mu$ is distributed as a Geometric random variable). An {\it adjoint tree} is a tree where the root has offspring distribution $\widetilde \mu$, and all other vertices have offspring distribution $\mu$. In particular the trees attached to the special vertices of $\mathcal T$, either in the past or in the future, are adjoint trees. We shall denote a $\mu$-Bienaym\'e-Galton-Watson tree by $\mathcal T_c$ and an adjoint tree by $\widetilde {\mathcal T}_c$. 

\subsection{Tree-indexed random walk, Green's function, and Branching capacity}\label{sec.treeRW}
We define the random walk $(S_u^x)_{u\in \mathcal T}$ indexed by $\mathcal T$, starting from some $x\in \mathbb Z^d$, as follows. First assign independent and identically distributed random variables
to all the edges of the tree, whose  common law is taken for simplicity to be the uniform distribution on the neighbors of the origin, in other words the jump distribution of the usual simple random walk. It is however likely that most of our results could be proved under much less restrictive hypotheses, in particular being centered, irreducible, and with a finite $d$-th moment as in~\cite{Zhu16} might be sufficient, but we believe that taking simple random walk jumps will ease the reading of some arguments. Then let $S_\varnothing^x =x$ and for each vertex $u$ different from the root define $S_u^x$ as the sum of the random variables assigned to the edges on the unique geodesic going from $u$ to the root. When $x$ is the origin, we shall sometimes drop it from the notation. We denote by 
$$\mathcal T^x = \{S_u^x:u\in \mathcal T\}, \qquad \mathcal T^x_-= \{S_u^x : u \in \mathcal T_-\}, \qquad \mathcal T_+^x=\{S_u^x : u\in \mathcal T_+\},$$
respectively the range of the walk indexed by $\mathcal T$, starting from $x$, and the restriction of its range to the vertices on $\mathcal T_-$ and $\mathcal T_+$.

We define similarly the random walk indexed by a critical tree $\mathcal T_c$ (also called branching random walk), or an adjoint tree $\widetilde{\mathcal T}_c$, and denote their ranges respectively by $\mathcal T^x_c$ and $\widetilde{\mathcal T}_c^x$, when it starts from $x$.

We then let for $z\in \mathbb Z^d$, 
$$G(z) = \mathbb E\Big[ \sum_{u\in\mathcal T_-} \mathbf 1\{S_u = z\}\Big],$$
where we recall $(S_u)_{u\in \mathcal T}$ is the simple random walk indexed by $\mathcal T$ starting from the origin. We also recall that as $\|z\|\to \infty$, one has 
\begin{equation}\label{asympG}
G(z)\sim a_d \cdot \|z\|^{4-d},
\end{equation}
for some constant $a_d>0$, see e.g.~\cite{ASS23}, where $f\sim h$ means that the ratio of the two functions $f$ and $h$ converges to one. Letting now $g$ denote the Green's function of a usual simple random walk $(S_n)_{n\ge 0}$, i.e. $g(z) = \mathbb E[\sum_{n\ge 0} \mathbf 1\{S_n=z\}]$, we recall that one has $g(z)\sim a'_d \cdot \|z\|^{2-d}$, for some other constant $a'_d>0$, see~\cite{LL}. Moreover, it readily follows from the definition of $\mathcal T_-$ that 
\begin{equation}\label{g*g}
G(z) \sim \frac{\sigma^2}{2}\cdot g*g(z),  
\end{equation}
where $g*g(z) = \sum_{y\in \mathbb Z^d}g(z-y)g(y)$.

Now given a finite set $K\subset \mathbb Z^d$, and $x\in K$, we let 
$$e_K(x) = \mathbb P(\mathcal T^x_- \cap K = \emptyset),$$
and define $\textrm{BCap}(K) = \sum_{x\in K} e_K(x)$.

\subsection{Branching interlacements point process}\label{sec.BI}
We define here Branching interlacements using the representation as a Poisson point process on the space of doubly infinite trajectories  modulo time-shift. Let 
$$W = \{w : \mathbb Z \to \mathbb Z^d : \|w(n)\|\to +\infty, \textrm{ as }n\to \pm \infty\}. $$  
This space is naturally endowed with a sigma-algebra $\mathcal W$, generated by the projections $\pi_n:W\to \mathbb Z$, defined by $\pi_n(w)= w(n)$, for $n\in \mathbb Z$. 
We define the shift operators $\theta_k$ on $W$, $k\in \mathbb Z$,  by 
$$\theta_k(w)(n) = w(n+k), \quad \textrm{for all } n\in  \mathbb Z.$$ 
We next define the equivalence relation $\sim$ on $W$ by 
$$w\sim w' \quad \Longleftrightarrow \quad \exists k\in \mathbb Z : w' = \theta_k(w), $$
and let $W^* = W/_\sim$, the quotient space, as well as $\pi_*: W \to W^*$, the canonical projection. Then $W^*$ is naturally endowed with the pushforward sigma-algebra $\mathcal W^*$, induced by this projection. 
For $K\subset \mathbb Z^d$, nonempty and finite, we define $W_K$ the subset of $W$, defined by 
$$W_K = \{w\in W  : \pi_n(w) \in K, \textrm{ for some }n\in \mathbb Z\},$$
the space of trajectories intersecting $K$.
We also define the first entrance time of $w\in W_K$, by 
$$H_K(w) = \inf\{n\in \mathbb Z : w(n) \in K\}. $$ 
Let $W_K^0 = \{w\in W_K : H_K(w) = 0\}$, and $\pi_K: W_K \to W_K^0$, such that $\pi_K(w)$ is the unique element of $W_K^0$ in the same equivalent class as $w$. 
We can now define a family of finite measures $Q_K$, for $K\subset \mathbb Z^d$, finite and nonempty, which are supported on $W_K^0$, and such that for any $A\in \mathcal W$,
\begin{equation}\label{def.QK}
Q_K( A)  
 = \sum_{x\in K} \mathbb P\Big((S_{\mathcal T}^x(n))_{n\in \mathbb Z}  \in A,\,  \mathcal T^x_-\cap K=\emptyset\Big),
\end{equation} 
where we denote by $S^x_{\mathcal T}(n)$ the value of the walk indexed by $\mathcal T$, starting from $x$, at the vertex with label $n$. 
It has been observed in~\cite{Zhu18}, that these measures are consistent in the following sense (for the sake of completeness we reproduce the short proof of the next result below).  
\begin{prop}\label{prop.QK}
For any $K\subset K'\subset \mathbb Z^d$,  finite and nonempty, and any $A\in \mathcal W^*$, 
$$Q_K(\pi_*^{-1}(A) \cap W_K) = Q_{K'}(\pi_*^{-1}(A) \cap W_{K'}). $$ 
\end{prop}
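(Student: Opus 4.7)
The plan is to reduce the consistency identity to the shift invariance of a canonical $\sigma$-finite measure on $W$. Specifically, I would introduce the measure
$$\mathbb{Q} := \sum_{y\in \mathbb{Z}^d} \mathbb{P}_y,$$
where $\mathbb{P}_y$ denotes the law of the doubly-infinite trajectory $(S^y_{\mathcal{T}}(n))_{n\in\mathbb{Z}}$. This is $\sigma$-finite: for any finite $K \subset \mathbb{Z}^d$, a last-exit decomposition bounds $\mathbb{Q}(W_K) \le C \cdot \mathrm{BCap}(K) < \infty$. For $y\in K$, the event $\{\mathcal{T}^y_- \cap K = \emptyset\}$ is precisely the event that the trajectory $(S^y_{\mathcal{T}}(n))_n$ has first hitting label of $K$ equal to $0$, and these events, as $y$ ranges over $K$, exhaust $W_K^0$. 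Consequently, the definition of $Q_K$ rewrites as
$$Q_K(\pi_*^{-1}(A) \cap W_K) \;=\; \mathbb{Q}\bigl(\pi_*^{-1}(A) \cap W_K^0\bigr),$$
and similarly for $K'$. So the proposition reduces to the identity $\mathbb{Q}(\pi_*^{-1}(A) \cap W_K^0) = \mathbb{Q}(\pi_*^{-1}(A) \cap W_{K'}^0)$.

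Next, I would establish that $\mathbb{Q}$ is invariant under the canonical shift $\theta_1$ on $W$. This is exactly where the re-rooting invariance of the infinite invariant tree $\mathcal{T}$ (recalled in the excerpt, cf.\ \cite{LGL16,BW22,Zhu18}) enters. Under the map that shifts all labels by $+1$ and re-roots at the vertex formerly labeled $-1$, the tree $\mathcal{T}$ is invariant in law, while the walk values at all other vertices, determined by the i.i.d.\ uniform edge increments, are unchanged. In trajectory terms, $\theta_{-1}(\mathcal{S}^y)$ has value $S^y_{\mathcal{T}}(-1) =: y'$ at the new label $0$, and conditionally on $y' \sim y$ (a uniform neighbor, carried by the edge between the root and its past-special child), is distributed as $\mathcal{S}^{y'}$. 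Summing,
$$\mathbb{Q}(\theta_{-1}^{-1}(A)) \;=\; \sum_{y}\sum_{y' \sim y} \frac{1}{2d}\,\mathbb{P}_{y'}(A) \;=\; \sum_{y'}\mathbb{P}_{y'}(A) \;=\; \mathbb{Q}(A),$$
and invariance under all $\theta_k$ follows by iteration.

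To conclude, since $K \subset K'$, every $w \in W_K^0$ lies in $W_{K'}$, so it has a well-defined first hitting label of $K'$, say $-m \le 0$. Partitioning by this value gives the disjoint decomposition
$$\pi_*^{-1}(A) \cap W_K^0 \;=\; \bigsqcup_{m\ge 0} \theta_m\!\left(\pi_*^{-1}(A) \cap W_{K'}^0 \cap \{h_K = m\}\right),$$
where $h_K$ is the first hitting label of $K$ and we used that $\pi_*^{-1}(A)$ is shift-invariant. Applying the shift invariance of $\mathbb{Q}$ term by term and summing the resulting disjoint pieces $\pi_*^{-1}(A) \cap W_{K'}^0 \cap \{h_K = m\}$, which exhaust $\pi_*^{-1}(A) \cap W_{K'}^0$ (any trajectory in $W_{K'}^0$ lying above $A$ eventually hits $K$, because one may assume $A \subset W_K^*$ without loss), yields
$$\mathbb{Q}\bigl(\pi_*^{-1}(A) \cap W_K^0\bigr) \;=\; \mathbb{Q}\bigl(\pi_*^{-1}(A) \cap W_{K'}^0\bigr),$$
which is the desired identity.

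The main obstacle is verifying the shift invariance of $\mathbb{Q}$: it is the only nontrivial input and it is where one must translate the combinatorial re-rooting invariance of $\mathcal{T}$ into a statement about the walk trajectory as a random element of $W$. Once this is in hand, the rest of the argument is a clean bookkeeping of shifts.
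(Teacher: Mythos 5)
Your proof is essentially the paper's proof, organized differently: both rest on the re-rooting/shift invariance of the infinite invariant tree $\mathcal{T}$ and a decomposition according to first hitting times. The paper decomposes $B=\pi_K(\pi_*^{-1}(A))$ into pieces $B_{x,y,k}$ indexed by the start point, the first-hit point of $K'$ and the hitting label, and applies shift invariance of $\mathcal{T}$ term by term; you package the same fact as shift invariance of the aggregate measure $\mathbb{Q}=\sum_y\mathbb{P}_y$ on $W$ and then do a single shift-and-sum over the hitting label. Your version is a clean abstraction in the spirit of Sznitman's construction for simple random walk, but the underlying mechanism is identical.

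Two small corrections. First, the justification of $\sigma$-finiteness is wrong as written: $\mathbb{Q}(W_K)=\sum_{y\in\mathbb{Z}^d}\mathbb{P}(\mathcal{T}^y\cap K\neq\emptyset)$ diverges, since by Proposition~\ref{Zhu.hit} the summand decays only like $\|y\|^{4-d}$, which is not summable on $\mathbb{Z}^d$; so the bound $\mathbb{Q}(W_K)\le C\cdot\mathrm{BCap}(K)$ cannot hold. What is finite, and what your argument actually uses, is $\mathbb{Q}(W_K^0)=\mathrm{BCap}(K)$ (and $\mathbb{Q}$ is $\sigma$-finite because the sets $\{w(0)=z\}$, $z\in\mathbb{Z}^d$, each carry total mass one). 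Second, the parenthetical describing $S^y_{\mathcal{T}}(-1)$ as a uniform neighbour of $y$ is not generally accurate: consecutive labels in the depth-first order need not be tree-adjacent, so the increment $S(-1)-S(0)$ is not a single random-walk step in general. This does not matter, since the shift invariance of $\mathbb{Q}$ follows from the lattice translation $y\mapsto y+S(1)$ together with the re-rooting invariance of $\mathcal{T}$, without any claim on the law of $S(\pm 1)$. (A cosmetic remark: $\mathbb{Q}$ is already used in the paper for the law of the Branching Interlacements point process, so another symbol would avoid a clash.)
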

\begin{proof}
Let $A\in \mathcal W^*$, and $B= \pi_K(\pi_*^{-1}(A))$. Since $Q_K$
is supported on $W_K^0$, one has $Q_K(\pi_*^{-1}(A) \cap W_K) = Q_K(B)$, and similarly $Q_{K'}(\pi_*^{-1}(A) \cap W_{K'}) = Q_{K'}(\pi_{K'}(B))$. Hence, it amounts to show that 
\begin{equation}\label{QKB}
Q_K(B)= Q_{K'}(\pi_{K'}(B)).
\end{equation}
For $x\in K'$, $y\in K$ and $k\in \mathbb N$, let 
$$B_{x,y,k} = B\cap \{w(0)=x, H_{K'}(w) = -k, w(-k) = y\}.$$
Note that 
$$\pi_{K'}(B_{x,y,k})=\theta_{-k}(B_{x,y,k})  = \theta_{-k}(B) \cap \{w(0)= y, w(k)=x, H_K(w)=k\}.$$ 
Now by~\eqref{def.QK} and translation invariance of the infinite invariant tree, one has 
\begin{align*}
Q_K(B_{x,y,k}) & = \mathbb P\big(\mathcal S^x_{\mathcal T}(-k) = y, S_{\mathcal T}^x(-k')\notin K' \ \forall k'> k, (S_{\mathcal T}^x(n))_{n\in \mathbb Z} \in B\Big)\\
& = \mathbb P\Big(S_{\mathcal T}^y(k) = x, \mathcal T^y_-\cap K'=\emptyset, S_{\mathcal T}^y(k')\notin K \ \forall k'<k, (S_{\mathcal T}^y(n))_{n\in \mathbb Z} \in \theta_{-k}(B)\Big)\\
& = Q_{K'}\big(\pi_{K'}(B_{x,y,k})\big). 
\end{align*}
Summing over all possible $x$, $y$ and $k$, we get~\eqref{QKB}. 
\end{proof}
As a consequence of Proposition~\ref{prop.QK} one can define a sigma-finite measure $\nu$ on $(W^*,\mathcal W^*)$ by setting 
$$\nu(A) = \limsup_{|K|\to \infty}\  Q_K(\pi_*^{-1}(A)\cap W_K), \qquad \textrm{for any }A\in \mathcal W^*. $$
Importantly, letting $W_K^* = \pi_*(W_K)$, one has
$$\nu(A) = Q_K(\pi_*^{-1}(A)), \qquad \textrm{for any }A\subseteq W_K^*.$$ 
Then the \textit{Branching Interlacement point process} is the Poisson point process on $W^*\times [0,\infty)$ with intensity measure $\nu\times \lambda$, where $\lambda$ is the Lebesgue measure. This is a probability measure $\mathbb Q$ on the space  
$$\Omega = \Big\{\omega=\sum_{n\in \mathbb N} \delta_{(w_n,u_n)} : \omega(W_K^*\times [0,u])<\infty, \textrm{ for any finite }K\subset \mathbb Z^d, \textrm{ and any } u\ge 0\Big\},$$
of locally finite point measures on $W^*\times [0,\infty)$. Then for any $u\ge 0$, we can define the random set $\mathcal I^u\subset \mathbb Z^d$, called \textit{Branching interlacements at level $u$}, by 
$$\mathcal I^u = \bigcup_{n\in \mathbb N:u_n\le u} \textrm{Range}(w_n), $$
where $\sum_{n\in \mathbb N} \delta_{(w_n,u_n)}$ has law $\mathbb Q$, and for $w^*\in W^*$, we write  $\textrm{Range}(w^*) = \{w(n): n\in \mathbb Z\}$, with $w$ any element of $W$ satisfying $\pi_*(w)=w^*$. Note that  by definition the total mass of $Q_K$ is equal to $\textrm{BCap}(K)$, recall~\eqref{def.QK}, and thus we recover~\eqref{def.Iu1}.

Given $K\subset \mathbb Z^d$, nonempty and finite, we define the map $s_K:W_K^*\to W_K^0$, by letting $s_K(w^*)$ be the unique element of $\pi_*^{-1}(w^*)$, which is in $W_K^0$, for any $w^*\in W_K^*$. Then for $u\ge 0$, we can define the map $\mu_{K,u}$ on $\Omega$, by  
$$\mu_{K,u}(\omega) = \sum_{n\, :\, w_n\in W_K^*, u_n\le u} \delta_{s_K(w_n)}, \qquad \textrm{for any }\omega=\sum_{n}\delta_{(w_n,u_n)}.$$
Also given a measurable set $A$, and $\mu= \sum_{i\in I} \delta_{w_i}$, with $I$ finite, we write $\mathbf 1\{w\in A\}\cdot \mu = \sum_{i\in I\, :\, w_i\in A} \delta_{w_i}$.

An important representation of the trace of $\mathcal I^u$ on any finite set $K$, which readily follows from its definition and basic properties of Poisson point processes, is the following. Let $N_K^u$ be a Poisson random variable with mean $u\cdot \textrm{BCap}(K)$, and independently of $N_K^u$, let us give for each $i\ge 1$,  independent random walks indexed by $\mathcal T$, with starting points in $K$ chosen independently of each other according to the probability measure $e_K(\cdot)/\textrm{BCap}(K)$, and conditioned on not hitting $K$ in the past. Denote by $\mathcal T^{(i)}$, $i\ge 1$, their respective ranges. Then 
\begin{equation}\label{rep.IuK}
\mathcal I^u \cap K\  \stackrel{\textrm{(law)}}{=} \ \bigcup_{i=1}^{N_K^u} \mathcal T^{(i)}\cap K.
\end{equation}

\subsection{Hitting probabilities for branching random walks} 
We state here some important results proved in~\cite{Zhu16}.
\begin{prop}[\cite{Zhu16}] \label{Zhu.hit}
There exists a constant $C>0$, such that for any finite set $K\subset \mathbb Z^d$ containing the origin, and any $z\in \mathbb Z^d$, with $\|z\|\ge 2 \cdot \rm{diam}(K)$, 
$$\mathbb P(\mathcal T^z \cap K \neq  \emptyset) \le C\cdot \frac{\rm{BCap}(K)}{ \|z\|^{d-4}}, \quad \text{and} \quad  
\mathbb P(\mathcal T^z_c \cap K \neq  \emptyset) \le C\cdot \frac{\rm{BCap}(K)}{ \|z\|^{d-2}}. $$
Moreover, 
$$\lim_{\|x\|\to \infty} \frac{\mathbb P(\mathcal T^x_- \cap K \neq \emptyset)}{G(x)} = {\rm BCap}(K).$$
\end{prop}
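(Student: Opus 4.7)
Since Proposition~\ref{Zhu.hit} collects results of Zhu, I sketch the standard potential-theoretic approach, whose technical heart is a first-visit decomposition in $\mathcal T_-$.

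For the limit in the third part, my plan is to define a canonical first-visit vertex $u^\star\in\mathcal T_-$, e.g.\ the one with the largest label among those whose walker image $S^x_u$ lies in $K$, which is uniquely defined on the hitting event. Conditioning on $u^\star$ and on $y := S^x_{u^\star}$, I would split $\mathcal T_-$ into (i) the tree geodesic from the root $\varnothing$ down to $u^\star$, along which the walker traces a simple random walk from $x$ to $y$ whose intermediate values avoid $K$, and (ii) the remainder of $\mathcal T_-$ viewed from $u^\star$. By the rerooting and label-shift invariance of the infinite invariant tree recalled above, this second piece should have the same law as $\mathcal T_-$ rooted at $y$, and the requirement that $u^\star$ be the \emph{first}-visit vertex forces its walker image to avoid $K$, contributing exactly a factor $e_K(y)$. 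Summing the path contribution over all admissible $u^\star$ with $S^x_{u^\star}=y$ is by construction bounded by $G(x-y)$ and, for fixed $y$, equivalent to $G(x-y) \sim G(x)$ as $\|x\|\to\infty$ by~\eqref{asympG}. Summing over $y\in K$ then gives $G(x)\cdot\textrm{BCap}(K)\cdot(1+o(1))$.

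For the two upper bounds, the same decomposition, used now as an inequality rather than an identity, yields
\[
\mathbb P(\mathcal T^z_- \cap K \neq \emptyset) \;\le\; \sum_{y\in K}\, G(z-y)\cdot e_K(y),
\]
and an analogous bound for $\mathcal T^z_+$; summing the two gives part one. For part two the same argument applies with the simple-random-walk Green's function $g(z-y)$ in place of $G(z-y)$: indeed, a walk indexed by a critical Bienaym\'e--Galton--Watson tree has Green's function $g$, since $\mathbb E\,Z_n=1$. Under $\|z\|\ge 2\,\textrm{diam}(K)$ one has $\|z-y\|\asymp \|z\|$ uniformly in $y\in K$, so by~\eqref{asympG} and the analogous estimate $g(z)\sim a'_d\|z\|^{2-d}$ the factors $G(z-y)\lesssim \|z\|^{4-d}$ and $g(z-y)\lesssim \|z\|^{2-d}$ can be pulled out of the sum, leaving $\sum_{y\in K} e_K(y) = \textrm{BCap}(K)$.

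The main obstacle is step (ii) of the decomposition: identifying the law of the sub-object of $\mathcal T_-$ hanging from $u^\star$, viewed as rerooted at $u^\star$, with that of $\mathcal T_-$ itself rooted at $y$. Unlike for a Bienaym\'e--Galton--Watson tree, $\mathcal T_-$ carries a distinguished spine decorated by adjoint trees with offspring laws $\widetilde\mu$ and $\mu_{\textrm{sb}}$, so one must split cases depending on whether $u^\star$ lies on the spine or inside one of these adjoint subtrees, and in each case perform a surgery realizing the sought distributional identity. The underlying mechanism is the combined rerooting and label-shift invariance of the infinite invariant tree; its concrete implementation here is delicate and is the technical heart of Zhu's argument.
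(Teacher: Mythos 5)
The paper does not prove Proposition~\ref{Zhu.hit}: it is cited verbatim from Zhu~\cite{Zhu16}, so there is no internal proof to compare your sketch against. Evaluated on its own merits, your outline captures the right spirit (a last-exit/first-visit decomposition combined with shift-and-reroot invariance), but it has two concrete gaps.

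First, the bookkeeping of the decomposition is inconsistent. You take $u^\star$ to have the \emph{largest} label in $\mathcal{T}_-$ among vertices whose walker image lies in $K$. Labels in $\mathcal{T}_-$ are negative and decrease towards infinity, so ``largest label'' means the vertex encountered \emph{last} in the depth-first search from infinity. After rerooting at $u^\star$ (shifting labels so that $u^\star$ becomes label $0$), the vertices with labels \emph{smaller} than that of $u^\star$ form the new $\mathcal{T}'_-$; but the extremality of $u^\star$'s label constrains instead the vertices with \emph{larger} labels, which land in the new $\mathcal{T}'_+$. So the event you rely on to produce the factor $e_K(y)=\mathbb{P}(\mathcal{T}^y_-\cap K=\emptyset)$ is not the one your choice of $u^\star$ creates. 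You would need to take the vertex with the \emph{smallest} label; but then the geodesic from $\varnothing$ to $u^\star$ is no longer forced to avoid $K$ at intermediate steps, so claim (i) fails as stated. Either way, the factorization you assert does not come for free from shift invariance alone; one needs a Markov-type statement across label $\mathrm{label}(u^\star)$, which is exactly the delicate part of Zhu's argument.

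Second, the critical-tree bound is not ``the same argument with $g$ in place of $G$.'' The correct first-visit identity for $\mathcal{T}_c$ is Proposition~\ref{Zhu.hit2}, which attaches the \emph{adjoint-tree} escape probabilities $k_K(\gamma(i))$ multiplicatively along the whole first-visit path $\gamma$; there is no single factor $e_K(y)$, and $k_K$ is a genuinely different functional. Bounding $k_K\le 1$ only gives $\mathbb{P}(\mathcal{T}^z_c\cap K\neq\emptyset)\le\mathbb{P}_z(H_K<\infty)\asymp \mathrm{Cap}(K)\|z\|^{2-d}$, which is strictly weaker than the stated $\mathrm{BCap}(K)\|z\|^{2-d}$ bound whenever $\mathrm{BCap}(K)\ll\mathrm{Cap}(K)$ — for instance for balls $B(0,R)$, where $\mathrm{Cap}\asymp R^{d-2}$ but $\mathrm{BCap}\asymp R^{d-4}$. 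Deriving the $\mathrm{BCap}$ bound from $\sum_\gamma b_K(\gamma)$ requires the intricate analysis in~\cite{Zhu16}; it does not follow from a naive last-exit decomposition with escape probability $e_K$.
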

We shall need the following improvement of the first part of the previous proposition, in which we get rid of the hypothesis on the distance between the starting point of the tree-indexed random walk and the set to be hit.  
\begin{cor}\label{cor.hit}
There exists a constant $C>0$, such that for any finite and nonempty set $K\subset \mathbb Z^d$, and any $z\in \mathbb Z^d$, 
$$\mathbb P(\mathcal T^z_- \cap K \neq  \emptyset) \le C\cdot \frac{\rm{BCap}(K)}{ {\rm dist}(z,K)^{d-4}}, \quad \text{and} \quad \mathbb P(\mathcal T^z_c \cap K\neq \emptyset) \le C \cdot \frac{\rm{BCap}(K)}{ {\rm dist}(z,K)^{d-2}}. $$ 
\end{cor}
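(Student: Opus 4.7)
The plan is to derive Corollary~\ref{cor.hit} from Proposition~\ref{Zhu.hit} by reusing the hitting-probability decomposition that underlies its proof. Specifically, the argument in~\cite{Zhu16} for the first part of Proposition~\ref{Zhu.hit} does not directly produce $\mathrm{BCap}(K)/\|z\|^{d-4}$; it first yields a pointwise bound of the form
$$\mathbb P(\mathcal T^z_-\cap K\neq \emptyset) \le C \sum_{y\in K} G(y-z)\, e_K(y),$$
valid for an arbitrary $z\in \mathbb Z^d$, coming from an entrance decomposition on the vertices of $\mathcal T_-$ landing in $K$. Only at the very last step is the hypothesis $\|z\|\ge 2\mathrm{diam}(K)$ used, to replace each $G(y-z)$ by the single constant $G(z)\asymp \|z\|^{4-d}$ and reconstruct $\mathrm{BCap}(K)=\sum_{y}e_K(y)$ on the right-hand side. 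The entirely parallel decomposition for $\mathcal T^z_c$ holds with the simple random walk Green's function $g$ in place of $G$, since the expected number of vertices of a walk indexed by a critical $\mu$-Bienaym\'e-Galton-Watson tree landing at a point $y$ equals $g(y-z)$ (using $\mathbb E[Z_n]=1$ for all $n$).

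Granted the pointwise decomposition, the corollary follows immediately. For any $y\in K$ one has the trivial inequality $\|y-z\| \ge \mathrm{dist}(z,K)$, and the asymptotic~\eqref{asympG} together with the finiteness of $G$ on $\mathbb Z^d$ (itself a consequence of~\eqref{g*g} and transience of simple random walk in $d\ge 5$) yields a uniform bound $G(x) \lesssim (1\vee \|x\|)^{4-d}$. In particular, $G(y-z)\lesssim \mathrm{dist}(z,K)^{4-d}$ for every $y\in K$, at least when $\mathrm{dist}(z,K)\ge 1$. Pulling this bound out of the sum and using $\sum_{y\in K}e_K(y) = \mathrm{BCap}(K)$ gives the first inequality of Corollary~\ref{cor.hit}. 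The second inequality, for $\mathcal T^z_c$, is obtained identically with $g(x)\lesssim (1\vee\|x\|)^{2-d}$ in place of the $G$-bound. The degenerate case $\mathrm{dist}(z,K)=0$, i.e.\ $z\in K$, is vacuous since the right-hand sides are then interpreted as infinite.

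The main obstacle is justifying that the pointwise decomposition above is indeed valid for arbitrary $z$, with no distance hypothesis between $z$ and $K$. Fortunately, the hypothesis $\|z\|\ge 2\mathrm{diam}(K)$ of Proposition~\ref{Zhu.hit} is used only in the step where one uniformly replaces $G(y-z)$ by $G(z)$; the decomposition itself is derived in~\cite{Zhu16} for all $z$. Thus the proof is essentially a re-reading of that argument, stopping one step earlier, and exploiting the finer dependence of $G(y-z)$ on $y-z$ to pull out $\mathrm{dist}(z,K)^{4-d}$ instead of the cruder $\|z\|^{4-d}$.
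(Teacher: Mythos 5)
Your argument takes a genuinely different route from the paper's. The paper treats Proposition~\ref{Zhu.hit} strictly as a black box: it partitions $K$ into dyadic annuli $K_i = K\cap\{x : 2^i\,\mathrm{dist}(z,K)\le\|z-x\|<2^{i+1}\mathrm{dist}(z,K)\}$ centered at $z$, chops each $K_i$ into at most $100^d$ pieces whose diameters are smaller than half their distance to $z$ (so that the hypothesis $\|z\|\ge 2\,\mathrm{diam}$ of Proposition~\ref{Zhu.hit} applies after recentering), applies Proposition~\ref{Zhu.hit} to each piece, bounds each piece's capacity by $\mathrm{BCap}(K)$ via monotonicity of the branching capacity, and finally sums the geometric series in $i$. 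Your proposal instead goes back inside the proof of Proposition~\ref{Zhu.hit} in Zhu's paper and extracts the sharper pointwise entrance/last-exit estimate $\mathbb P(\mathcal T^z_-\cap K\neq\emptyset)\le C\sum_{y\in K}G(y-z)e_K(y)$ (resp.\ with $g$ in place of $G$ for $\mathcal T_c$), then bounds $G(y-z)\lesssim\mathrm{dist}(z,K)^{4-d}$ uniformly in $y\in K$ and factors out $\mathrm{BCap}(K)=\sum_y e_K(y)$. If that intermediate inequality is indeed established in Zhu's paper with no restriction on $\|z\|$, your route is shorter and cleaner, and it avoids the combinatorial annulus/subdivision step and the appeal to monotonicity of $\mathrm{BCap}$.

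The only caveat worth flagging is that the inequality $\mathbb P(\mathcal T^z_-\cap K\neq\emptyset)\le C\sum_{y\in K}G(y-z)e_K(y)$ is not one of the results quoted in the preliminaries of this paper (Proposition~\ref{Zhu.hit} itself only gives the $\mathrm{BCap}(K)/\|z\|^{d-4}$ form under the distance hypothesis, and Proposition~\ref{Zhu.hit2} gives a first-entrance, not last-exit, decomposition for $\mathcal T_c$). You would therefore need to cite or re-derive the relevant intermediate bound from~\cite{Zhu16}; note that it is a genuine last-exit type estimate (not a plain first-moment bound, which would only give $\sum_{y\in K}G(y-z)$ without the $e_K(y)$ factor), so it does require some work in the tree-indexed setting. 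The paper's proof is deliberately structured to avoid reopening that black box.
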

\begin{proof}
Note that one can always assume that the distance from $z$ to $K$ is larger than $2$, as otherwise the result is straightforward. Define now for $i\ge 0$, $K_i = K \cap \{x : 2^i {\rm dist}(z,K)\le \|z-x\| < 2^{i+1}{\rm dist}(z,K)\}$. Each $K_i$ can be subdivided in at most $100^d$ subsets, whose diameters are smaller than half their distance to $z$. Hence, applying Proposition~\ref{Zhu.hit} to each of these subsets, and for each $i$, yields for some positive constants $C$ and $C'$,  
\begin{equation*}
\mathbb P(\mathcal T^z_- \cap K\neq \emptyset)\le \sum_{i\ge 0} \mathbb P(\mathcal T^z_- \cap K_i\neq \emptyset) \le 100^dC \sum_{i\ge 0} \frac{\rm{BCap}(K)}{2^{i(d-4)} {\rm dist}(z,K)^{d-4}}\le C'\cdot \frac{\rm{BCap}(K)}{ {\rm dist}(z,K)^{d-4}}. 
\end{equation*}
The same proof applies for the walk indexed by $\mathcal T_c$. 
\end{proof}
Another consequence is the following strict monotonicity result for the branching capacity. 
\begin{lem}\label{strict.bcap}
For any finite and nonempty $K\subsetneq K'$, such that there exist $z\in K'\setminus K$ and a path starting from $z$ up to infinity that avoids $K'$, one has
$${\rm BCap}(K) < {\rm BCap}(K'). $$
In particular, there exists $\delta>0$, such that  $\inf_{z\neq 0} \rm{BCap}(\{0,z\})\ge (1+\delta)\cdot \rm{BCap}(\{0\})$. 
\end{lem}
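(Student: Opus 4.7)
The plan is to first derive the identity
$$\text{BCap}(K') - \text{BCap}(K) = \sum_{x \in K' \setminus K} \mathbb{P}\big(\mathcal{T}^x_- \cap K' = \emptyset,\ \mathcal{T}^x \cap K = \emptyset\big), \qquad (\star)$$
and then to exhibit a strictly positive term at $x = z$. For $(\star)$: Proposition~\ref{prop.QK} applied to $A = W^*_K$ gives $Q_{K'}(W_K) = Q_K(W_K) = \text{BCap}(K)$, whence $\text{BCap}(K') - \text{BCap}(K) = Q_{K'}(W_{K'} \setminus W_K)$; expanding with~\eqref{def.QK}, contributions from $x \in K$ vanish (the trajectory starts in $K$ by definition), leaving exactly the right-hand side of $(\star)$.

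The $x=z$ event can be rewritten as $\{\mathcal{T}^z_- \cap K' = \emptyset\} \cap \{\mathcal{T}^z_+ \cap K = \emptyset\}$, using $K \subseteq K'$ and $\mathcal{T}^z = \mathcal{T}^z_- \cup \mathcal{T}^z_+$. I decompose the walk indexed by $\mathcal{T}$ into three pieces that are conditionally independent given the spine: the simple random walk $W_0 = z, W_1, W_2, \ldots$ along the spine of $\mathcal{T}_-$; the adjoint-tree walks on the left side of each spine vertex (populating $\mathcal{T}^z_-$); and the adjoint-tree walks on the right side together with the normal subtrees of the root (populating $\mathcal{T}^z_+$). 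Using the hypothesized infinite self-avoiding path $z = z_0, z_1, z_2, \ldots$ avoiding $K'$, I consider the event $A_{R,M}$ that $W_i = z_i$ for $0 \leq i \leq R$, that $W_i \notin K'$ for all $i > R$, and that $\sum_{i > R} \|W_i - K'\|^{-(d-2)} \leq M$. By transience of simple random walk in $d \geq 3$ together with the Green-function bound $\mathbb{E}\sum_i \|W_i - K'\|^{-(d-2)} < \infty$ (which holds precisely because $d \geq 5$), one has $\mathbb{P}(A_{R,M}) > 0$ for every $R$ and for $M$ sufficiently large.

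The main technical step is then to show that, on $A_{R,M}$, the attached adjoint-tree walks simultaneously avoid $K'$ (on the left) and $K$ (on the right) with positive conditional probability. For each $n \leq R$ (where $W_n$ may lie close to $K'$), I use that the adjoint tree at $v_n$ is reduced to its root with probability $\tilde{\mu}(0) = 1 - \mu(0) > 0$, contributing a factor at least $\tilde{\mu}(0)^{2R}$ for left and right combined, and the trivial tree trivially avoids $K'$ since its unique vertex sits at $W_n \notin K'$. For $n > R$, the analog of Corollary~\ref{cor.hit} for adjoint-tree walks (derived by conditioning on the root degree $\tilde{\mu}$, whose mean is finite under the positive-variance hypothesis on $\mu$) bounds each hitting probability by $C\,\text{BCap}(K')/\|W_n - K'\|^{d-2}$, and the constraint $\sum_{n > R}\|W_n - K'\|^{-(d-2)} \leq M$ combined with the inequality $\log(1-q) \geq -2q$ (valid once $R$ is large enough that all these $q_n$ are $\leq 1/2$) yields a surviving product at least $\exp(-2CM\,\text{BCap}(K'))$. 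Conditional independence of left and right subtrees given the spine handles both sides simultaneously. The main obstacle is precisely this coupling: one must control an infinite product of survival probabilities for subtrees anchored on a common spine whose walk is \emph{not} uniformly far from $K'$ near its start, and this is where both $\tilde{\mu}(0)>0$ (to handle the finitely many close vertices) and transience in $d \geq 5$ (to ensure the tail is summable) are essential.

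For the ``in particular'' assertion, I combine this strict inequality with an asymptotic analysis. Applying $(\star)$ with $K = \{0\}$ and $K' = \{0,z\}$ gives
$$\text{BCap}(\{0,z\}) - \text{BCap}(\{0\}) = \mathbb{P}\big(\mathcal{T}^z_- \cap \{0,z\} = \emptyset,\ \mathcal{T}^z \cap \{0\} = \emptyset\big).$$
By Corollary~\ref{cor.hit}, $\mathbb{P}(\mathcal{T}^z \cap \{0\} \neq \emptyset) \to 0$ as $\|z\| \to \infty$, whereas $\mathbb{P}(\mathcal{T}^z_- \cap \{z\} = \emptyset) = \text{BCap}(\{0\})$ by translation invariance; hence the right-hand side converges to $\text{BCap}(\{0\})$ and $\text{BCap}(\{0,z\})/\text{BCap}(\{0\}) \to 2$. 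Combined with the strict inequality applied on the finite set $\{z : 0 < \|z\| \leq R\}$ (which yields a uniform gap by finiteness of the set) for $R$ chosen so that the ratio exceeds, say, $3/2$ for $\|z\| > R$, this produces the uniform $\delta > 0$.
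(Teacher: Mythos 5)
Your derivation of the identity $(\star)$ via Proposition~\ref{prop.QK} is correct and is in fact the same identity the paper obtains by a direct shift-invariance computation (with $K'=K\cup\{z\}$; the condition $\mathcal T^x_-\cap K'=\emptyset$ already forces $\mathcal T^x_-\cap K=\emptyset$, so $(\star)$ reduces to the paper's expression). The asymptotic argument for the ``in particular'' part is also sound, modulo a harmless citation (you need the first estimate of Proposition~\ref{Zhu.hit} for $\mathcal T^z$, rather than Corollary~\ref{cor.hit}, which only treats $\mathcal T^z_-$ and $\mathcal T^z_c$).

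There is, however, a genuine gap in the positivity argument for the finitely many spine vertices $n\le R$. You claim that requiring both the left and the right adjoint tree at a spine vertex to be reduced to their root contributes a factor $\tilde\mu(0)^2$, hence $\tilde\mu(0)^{2R}$ overall. But the left and right offspring counts at a spine vertex are \emph{not} independent: they sum to $(\text{offspring of the special vertex})-1$, where the offspring number is drawn from $\mu_{\mathrm{sb}}(i)=i\mu(i)$. In particular $\mathbb P(\text{left}=0,\text{right}=0)=\mu_{\mathrm{sb}}(1)=\mu(1)$, which can vanish: for $\mu$ supported on $\{0,2\}$, every special vertex has exactly one normal child, so this probability is zero and the factor $\tilde\mu(0)^{2R}$ is not a valid lower bound. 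The paper's own argument sidesteps this by allowing the normal children to exist but imposing that they have no further descendants (probability $\mu(0)>0$, which does follow from mean one and positive variance) and that their positions lie on $\gamma$, hence outside $K'$; this is a positive-probability requirement at each of the finitely many spine vertices and does not need any independence between sides. You should replace the $\tilde\mu(0)^{2R}$ step with an argument of this type. A related but more minor point: ``conditional independence of left and right subtrees given the spine'' is also false at the level of the offspring numbers (the paper notes this explicitly), although it does not affect the tail estimate for $n>R$, since there a union bound over children suffices.
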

\begin{proof}
First, one can always assume that $K'= K\cup\{z\}$. Next, by using shift-invariance of $\mathcal T$, one has 
\begin{align*}
\rm{BCap}(K') &  = e_{K'}(z) + \sum_{x\in K}\big(e_K(x) - \mathbb P(\mathcal T^x_-\cap K = \emptyset, \mathcal T^x_- \cap \{z\} \neq \emptyset)\big)\\
& = \rm{BCap}(K) + e_{K'}(z) - \sum_{x\in K} \mathbb P(\mathcal T^z_- \cap K'= \emptyset, \mathcal T^z_+ \text{ first hits }K\text{ at }x) \\
& = \rm{BCap}(K) + \mathbb P(\mathcal T^z_-\cap K'=\emptyset, \mathcal T^z_+\cap K = \emptyset). 
\end{align*}
Hence, it amounts to see that the second term above is positive. For this note that it suffices to ask the walk indexed by the spine starting from $z$ follows the (deterministic) path avoiding $K'$, say $\gamma$, until it reaches a distance $R$ from $K'$ large enough, and furthermore that all children of the vertices on the spine until this time have no descendent and have their position also on $\gamma$. Note that this happens with positive probability, and once this is achieved, if $R$ is chosen large enough, both the past and future after this may also avoid $K'$ with positive probability, by using the first part of Proposition~\ref{Zhu.hit}. This concludes the first part of the lemma. The second part follows using  Lemma~\ref{lem.Bcap0x} below and a compacity argument. 
\end{proof}

Given $K\subset \mathbb Z^d$, $n\ge 0$, and a path $\gamma:\{0,\dots,n\}\to \mathbb Z^d$, we say that $\gamma$ goes from $x$ to $K$, and write it as $\gamma:x\to K$, if 
$\gamma(0)=x$, $\gamma(i) \notin K$, for all $i<n$ and $\gamma(n) \in K$. Then we define for such path, 
$$b_K(\gamma) = s(\gamma) \prod_{i=0}^{n-1} k_K(\gamma(i)),$$
where $s(\gamma)$ is the probability that a simple random walk follows the path $\gamma$ during its first $|\gamma|$ steps, i.e. $s(\gamma) = (2d)^{-n}$, and $k_K(x)$ is the probability that a walk indexed by an adjoint tree $\widetilde {\mathcal T}_c$ starting from $x$ does not hit $K$. We also denote by $\Gamma$ the path of first visit to the set $K$, which is the path followed by the walk indexed by the geodesic going from the root to the first vertex in the depth-first search order at which the walk indexed by $\mathcal T_c$ hits $K$. A fundamental result of Zhu~\cite{Zhu16}, says the following.
\begin{prop}[\cite{Zhu16}] \label{Zhu.hit2}
For any $K\subset \mathbb Z^d$, any $x\in \mathbb Z^d$, and any path $\gamma:x\to K$, one has 
$$\mathbb P(\mathcal T^x_c \cap K \neq \emptyset, \Gamma = \gamma) = b_K(\gamma). $$ 
In particular $\sum_{\gamma:x\to K} b_K(\gamma) = \mathbb P(\mathcal T_c^x \cap K \neq \emptyset)$. 
\end{prop}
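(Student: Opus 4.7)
The plan is to condition on the geodesic from the root of $\mathcal T_c$ to the first-visit vertex in DFS order, and to decompose the event $\{\Gamma=\gamma\}$ into independent contributions at each level. Let $n$ be the length of $\gamma$ and let $u_0=\varnothing,u_1,\dots,u_n$ denote this geodesic, so that $S_{u_i}=\gamma(i)$ on the event $\{\Gamma=\gamma\}$. For each $i<n$, write $k_i$ for the number of children of $u_i$ in $\mathcal T_c$ and $j_i$ for the left-to-right rank of $u_{i+1}$ among them. Depth-first search visits the subtrees rooted at the first $j_i-1$ children of $u_i$ entirely before descending into the subtree rooted at $u_{i+1}$, so the event $\{\Gamma=\gamma\}$ forces each of these $j_i-1$ "left-sibling" subtrees to avoid $K$; the $k_i-j_i$ right-sibling subtrees, and all later subtrees in DFS order, are unconstrained because $u_n$ has already hit $K$.

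Set $p_K(y)=\mathbb P(\mathcal T_c^y\cap K=\emptyset)$ and $\overline p_K(y)=(2d)^{-1}\sum_{z\sim y}p_K(z)$. Conditional on the geodesic shape, the subtrees emanating from distinct $u_i$ depend on disjoint families of edge increments and are therefore independent. Including the factor $(2d)^{-1}$ for the walk step from $\gamma(i)$ to $\gamma(i+1)$ along the geodesic, the contribution of level $i$, after summing over $k_i\ge 1$ and $j_i\in\{1,\dots,k_i\}$, is
\[
\frac{1}{2d}\sum_{k\ge 1}\mu(k)\sum_{j=1}^{k}\overline p_K(\gamma(i))^{j-1}
=\frac{1}{2d}\cdot\frac{1-\mathbb E[\overline p_K(\gamma(i))^X]}{1-\overline p_K(\gamma(i))},
\]
where $X\sim\mu$. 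The heart of the argument is that this quantity equals $(2d)^{-1}k_K(\gamma(i))$. Indeed, since the adjoint root has offspring distribution $\widetilde\mu(\ell)=\mathbb P(X>\ell)=\sum_{k\ge\ell+1}\mu(k)$ and its subtrees are independent $\mathcal T_c$-trees started at uniform neighbors of $y$,
\[
k_K(y)=\sum_{\ell\ge 0}\widetilde\mu(\ell)\,\overline p_K(y)^\ell
=\sum_{k\ge 1}\mu(k)\sum_{\ell=0}^{k-1}\overline p_K(y)^\ell
=\frac{1-\mathbb E[\overline p_K(y)^X]}{1-\overline p_K(y)},
\]
after swapping the two sums.

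Multiplying the $n$ level contributions together yields
\[
\mathbb P(\mathcal T_c^x\cap K\ne\emptyset,\,\Gamma=\gamma)
=(2d)^{-n}\prod_{i=0}^{n-1}k_K(\gamma(i))=s(\gamma)\prod_{i=0}^{n-1}k_K(\gamma(i))=b_K(\gamma),
\]
as claimed, the boundary case $x\in K$ (with $n=0$ and empty product) being trivial. The "in particular" statement follows at once by summing over the countable collection of paths $\gamma:x\to K$, since the events $\{\Gamma=\gamma\}$ partition $\{\mathcal T_c^x\cap K\ne\emptyset\}$.

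The main obstacle is conceptual rather than technical: one must recognise that the effective object sitting above each point $\gamma(i)$ of the geodesic — a $\mu$-distributed family of $\mathcal T_c$-subtrees with one child singled out as the continuation — is precisely the adjoint-tree structure $\widetilde{\mathcal T}_c$. Once that identification is made, the proof reduces to the one-line generating-function swap exhibited above.
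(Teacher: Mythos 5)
Your proof is correct. The paper attributes this result to Zhu~\cite{Zhu16} without reproducing a proof, so there is no in-paper argument to compare against; your derivation --- summing over the geodesic shape, noting that the first-visit constraint forces precisely the left-sibling subtrees to avoid $K$, and matching the per-level factor $\sum_{k\ge 1}\mu(k)\sum_{j=1}^{k}q^{j-1}=\sum_{\ell\ge 0}\widetilde\mu(\ell)\,q^{\ell}=k_K(\gamma(i))$, with $q=\overline p_K(\gamma(i))$, by the Fubini swap --- is the natural spine-decomposition argument and, to my knowledge, substantively the one in Zhu's paper. One small clarification on your concluding remark: the ``identification'' you flag is really a size-biasing identity rather than a literal match of laws; since $\mu$ has mean one, drawing $k\sim\mu$ and then summing over all ranks $j\in\{1,\dots,k\}$ is the same as drawing $k$ from the size-biased law $k\mapsto k\mu(k)$ and then $j$ uniformly from $\{1,\dots,k\}$, which makes the number of left siblings $j-1$ follow exactly $\widetilde\mu$, the same structure the paper recalls for the past offspring of special vertices of the infinite invariant tree. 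Your algebra already encodes this correctly.
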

\begin{rem}\emph{
In fact we shall use more precisely that conditionally on $\Gamma$, the trees attached to the left and to the right of $\Gamma$ are distributed as adjoint trees, those on the left being also conditioned on not hitting $K$. }
\end{rem}

\section{Proof of Proposition~\ref{prop.cov}} \label{sec:prop.cov}
The proposition follows from the next lemma. 
\begin{lem}\label{lem.Bcap0x}
There exists a constant $c>0$, such that 
\[  2\rm{BCap}(\{0\}) - \rm{BCap}(\{0,x\}) \sim \frac{c}{\|x\|^{d-4}}. \]
More specifically, one has $c = 2{\rm BCap}(\{0\})^2 \cdot a_d$. 
\end{lem}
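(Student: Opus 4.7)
The argument naturally splits into a short reduction step and a factorization step. First, a short inclusion--exclusion reduces the statement to a single joint-hitting asymptotic. Expanding $\textrm{BCap}(\{0,x\}) = e_{\{0,x\}}(0) + e_{\{0,x\}}(x)$, using translation invariance of $\mathcal T$ to rewrite $e_{\{0,x\}}(x)$, and the reflection symmetry $y \mapsto -y$ of the simple random walk jumps, one obtains
\[
2\,\textrm{BCap}(\{0\}) - \textrm{BCap}(\{0,x\}) \;=\; 2\,\mathbb P\big(0\notin \mathcal T^0_-,\, x\in \mathcal T^0_-\big).
\]
Proposition~\ref{Zhu.hit} applied with $K=\{0\}$ and $y=-x$, combined with \eqref{asympG}, already gives
\[
\mathbb P(x\in\mathcal T^0_-) \;=\; \mathbb P(0\in\mathcal T^{-x}_-) \;\sim\; a_d\,\textrm{BCap}(\{0\})\,\|x\|^{4-d},
\]
so the lemma is equivalent to the asymptotic independence $\mathbb P(0\notin\mathcal T^0_-\,|\, x\in\mathcal T^0_-) \to \textrm{BCap}(\{0\})$ as $\|x\|\to \infty$.

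To establish this factorization I would exploit the canonical spinal description of $\mathcal T_-$: the positions $X_k := S^0_{v_{-k}}$ along the spine form a simple random walk started at the origin, and conditionally on $(X_k)_{k\ge 1}$ the walks $\widetilde{\mathcal T}_c^{X_k,(k)}$ indexed by the past-side adjoint trees attached to the special vertices of the spine are independent, with $\mathcal T^0_- = \bigcup_{k\ge 1}\widetilde{\mathcal T}_c^{X_k,(k)}$. Writing $q_k := \mathbb P(\widetilde{\mathcal T}_c^{X_k}\not\ni 0\mid X_k)$ and $\widetilde q_k := \mathbb P(\widetilde{\mathcal T}_c^{X_k}\cap\{0,x\}=\emptyset\mid X_k)$, conditional independence yields
\[
\mathbb P\big(0\notin \mathcal T^0_-,\, x\in\mathcal T^0_-\big) = \mathbb E\bigg[\prod_{k\ge 1}q_k - \prod_{k\ge 1}\widetilde q_k\bigg] = \sum_{j\ge 1}\mathbb E\bigg[\Big(\prod_{k<j}q_k\Big)(q_j-\widetilde q_j)\Big(\prod_{k>j}\widetilde q_k\Big)\bigg].
\]
The dominant contribution should come from indices $j$ with $\|X_j\|$ of order $\|x\|$: for such $j$ the pre-factor $\prod_{k<j}q_k$ is close to $\mathbb P(0\notin \mathcal T^0_-) = \textrm{BCap}(\{0\})$, the post-factor $\prod_{k>j}\widetilde q_k$ is close to $1$ (by transience of $(X_k)_{k>j}$ combined with the polynomial decay of hitting probabilities from Corollary~\ref{cor.hit}), and $q_j - \widetilde q_j = \mathbb P(\widetilde{\mathcal T}_c^{X_j}\ni x,\,\widetilde{\mathcal T}_c^{X_j}\not\ni 0)$ is asymptotic to $\mathbb P(\widetilde{\mathcal T}_c^{X_j}\ni x)$. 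Summing the last quantity against the law of the spine walk reproduces $\mathbb P(x\in\mathcal T^0_-)\sim a_d\,\textrm{BCap}(\{0\})\,\|x\|^{4-d}$, using the convolution relation \eqref{g*g}; multiplying by the pre-factor $\textrm{BCap}(\{0\})$ then yields the announced constant $c = 2\,\textrm{BCap}(\{0\})^2\,a_d$.

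The main obstacle will be justifying the telescoping expansion with error terms of genuinely lower order than $\|x\|^{4-d}$. Concretely, the corrections to $\prod_{k<j}q_k \to \textrm{BCap}(\{0\})$ and $\prod_{k>j}\widetilde q_k \to 1$ come from spine vertices $X_k$ close to $x$, where the associated adjoint trees have non-negligible probability of simultaneously hitting $\{0\}$ and $\{x\}$; these are controlled by the $O(\|X_k-x\|^{2-d})$ decay from Corollary~\ref{cor.hit}, combined with standard estimates for the simple random walk Green's function. After integrating against the spine, the resulting error terms are of order $O(\|x\|^{2(2-d)}) = o(\|x\|^{4-d})$ since $d \ge 5$, so that \eqref{g*g} reproduces the precise leading constant $a_d$ in $G(x) \sim a_d \|x\|^{4-d}$ and gives the exact asymptotic claimed.
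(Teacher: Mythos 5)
Your reduction to $2\,\mathbb P\big(0\notin\mathcal T^0_-,\ x\in\mathcal T^0_-\big)$ is exactly the paper's first step, and the idea of factorizing across the spine is also the same in spirit. Where you diverge is the mechanism: the paper splits the spine at the exit time $\tau_r$ of $B(0,r)$ with $r=\|x\|^{2/3}$, so that the factorization
\[
\mathbb P\big(\mathcal F^0_-[0,\tau_r]\cap\{0\}=\emptyset,\ \mathcal F^0_-[\tau_r,\infty)\cap\{x\}\neq\emptyset\big)
\sim \mathrm{BCap}(\{0\})^2\, G(x)
\]
is \emph{exact} by the strong Markov property at $\tau_r$, leaving only two clearly delineated error events: the portion after $\tau_r$ hitting \emph{both} $0$ and $x$, and the portion before $\tau_r$ hitting $x$. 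You instead telescope over the index $j$ of a distinguishing adjoint tree. This is a legitimate alternative route, and your final constant bookkeeping (via Proposition~\ref{Zhu.hit} and \eqref{g*g}) lands on the correct answer.

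The gap is in the error control, which you acknowledge but substantially underestimate. Your claim that the corrections are of order $O(\|x\|^{2(2-d)})$ is not what one gets. In the telescoping, the corrections involve, e.g., a tree at $X_k$ (with $k\neq j$) hitting $\{0\}$ while the tree at $X_j$ hits $\{x\}$, and integrating such terms against the spine Green's function gives contributions of the form $\sum_u g(u)^2\,\|u-x\|^{4-d}$; the region $u\asymp O(1)$ alone already produces a term of order $\|x\|^{4-d}$, i.e.\ of the \emph{same} order as the main term, not smaller. The resolution is precisely what the cutoff at $r=\|x\|^{2/3}$ achieves: by forcing the "hitting-$x$" part to start from distance $r$, the joint hitting probability is bounded by $O(G(\Gamma_{\tau_r})\,G(x)) = O(\|x\|^{(4-d)(1+2/3)})$, which for $d\ge5$ is genuinely $o(\|x\|^{4-d})$. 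Without such a scale separation your approximations $\prod_{k<j}q_k\approx \mathrm{BCap}(\{0\})$ and $\prod_{k>j}\tilde q_k\approx 1$, and the inclusion--exclusion step $\sum_j\mathbb E[q_j-\tilde q_j]\sim \mathbb P(x\in\mathcal T^0_-)$, do not obviously control the near-diagonal contributions. So the blueprint is right, but you would need to inject a cutoff at an intermediate scale (as the paper does) to make the telescoping close rigorously; as written the error estimate is incorrect.
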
 
\begin{proof}
By symmetry ${\rm BCap}(\{0,x\}) = 2\,  e_{\{0,x\}}(0)$. Now one has 
$$e_{\{0,x\}}(0) = {\rm BCap}(\{0\}) - \mathbb P(\mathcal T^0_- \cap \{0\} = \emptyset, \mathcal T^0_- \cap \{x\} \neq \emptyset). $$  
Fix $r=\|x\|^{2/3}$. Let $(\Gamma_n)_{n\ge 0}$ denote the simple random walk starting from the origin, indexed by the spine of $\mathcal T_-$, and  
$$\tau_r = \inf \{n: \|\Gamma_n\|\ge r\}. $$  
Given $n\le m$, let $\mathcal F_-^0[n,m]$ be the part of the range of $\mathcal T^0_-$ associated to the vertices on the critical trees in the past attached to the spine between the intrinsic times $n$ and $m$. 
One has 
\begin{align}\label{decomposition1}
\nonumber & |\mathbb P\big(\mathcal T^0_- \cap \{0\} = \emptyset, \mathcal T^0_- \cap \{x\} \neq \emptyset\big) - 
\mathbb P\big(\mathcal F^0_-[0,\tau_r] \cap \{0\} = \emptyset, \mathcal F^0_-[\tau_r,\infty) \cap \{x\} \neq \emptyset\big)| \\
& \le \mathbb P\big(\mathcal F^0_-[\tau_r,\infty) \cap \{0\} \neq  \emptyset, \mathcal F^0_-[\tau_r,\infty) \cap \{x\} \neq \emptyset\big) + 
2\mathbb P\big(\mathcal F^0_-[0,\tau_r] \cap \{x\} \neq \emptyset\big). 
\end{align}
Now by Proposition~\ref{Zhu.hit},  
\begin{align*}
\mathbb P(\mathcal F^0_-[0,\tau_r] \cap \{0\} = \emptyset, \mathcal F^0_-[\tau_r,\infty) \cap \{x\} \neq \emptyset) 
& \sim  \mathbb P(\mathcal F^0_-[0,\tau_r] \cap \{0\} = \emptyset) \cdot {\rm BCap}(\{0\})\cdot G(x) \\   
& \sim {\rm BCap}(\{0\})^2 \cdot G(x). 
\end{align*}
Hence, it just remains to show that the two terms on the right-hand side of~\eqref{decomposition1} are of negligible order. Concerning the second one, one has by Proposition~\ref{Zhu.hit}, 
$$\mathbb P\big(\mathcal F^0_-[0,\tau_r] \cap \{x\} \neq \emptyset\big)\lesssim \mathbb E[\tau_r] \cdot \|x\|^{2-d} 
\lesssim \|x\|^{2-d+4/3}. 
$$
On the other hand, concerning the first term on the right-hand side of~\eqref{decomposition1}, it amounts to show that uniformly over $z\in \partial B(0,r)$, 
\begin{equation}\label{goal}
\mathbb P\big(\mathcal T^z_- \cap \{0\} \neq  \emptyset,\,  \mathcal T^z_- \cap \{x\} \neq \emptyset\big) = o(G(x)). 
\end{equation}
Consider again $(\Gamma_n)_{n\ge 0}$ the random walk indexed by the spine of $\mathcal T_-$. Applying again Proposition~\ref{Zhu.hit} yields
$$\mathbb P\big(\mathcal T^z_- \cap \{0\} \neq  \emptyset,\,  \mathcal T^z_- \cap \{x\} \neq \emptyset\big) 
\lesssim \sum_{n\neq m} \mathbb E_z[g(\Gamma_n)g(\Gamma_m-x)] 
+ \sum_{n\ge 0} \sum_{y\in \mathbb Z^d} \mathbb E_z[g(\Gamma_n-y)] \cdot g(x-y)g(y), 
$$ 
using the many-to-two lemma for the second term. Now elementary computation, using in particular~\eqref{g*g}, give 
\begin{align*}
& \sum_{n\ge 0} \sum_{y\in \mathbb Z^d} \mathbb E_z[g(\Gamma_n-y)] \cdot g(x-y)g(y)
= \sum_{u,y\in \mathbb Z^d} g(z-u)g(u-y)g(x-y)g(y) \\
& \lesssim \sum_{y\in \mathbb Z^d} G(z-y) g(x-y) g(y) \lesssim G(z) \cdot G(x), 
\end{align*}
and 
\begin{align*}
 \sum_{n\neq m} \mathbb E_z[g(\Gamma_n)g(\Gamma_m-x)] 
& \lesssim \sum_{u,v\in \mathbb Z^d} \Big(g(z-u) g(u) g(v-u) g(v-x)
+ g(z-u)g(u-x)g(v-u)g(v) \Big)\\
& \lesssim \sum_{u\in \mathbb Z^d} g(z-u)g(u) G(u-x) + g(z-u)g(u-x)G(u) \\
& \lesssim G(z) \cdot G(x),
\end{align*}
which altogether proves well~\eqref{goal}, and concludes the proof of the lemma. 
 \end{proof}

\begin{proof}[Proof of Proposition~\ref{prop.cov}] 
One has 
\begin{align*}
{\rm Cov} (\mathbf 1_{x\in \mathcal V^u}, \mathbf 1_{y\in \mathcal V^u}) & = \mathbb P(\{x,y\}\subset  \mathcal V^u) - \mathbb P(x\in \mathcal V^u) \cdot \mathbb P(y\in \mathcal V^u)   \\
 & =  \exp\big(-u\textrm{BCap}(\{x,y\})\big)-\exp\big(-2u\textrm{BCap}(\{0\})\big)  \\
  & \sim  \frac{cu}{\|x-y\|^{d-4}}\cdot \exp\big(-2u\textrm{BCap}(\{0\})\big)  ,
\end{align*}
with $c$ as in Lemma~\ref{lem.Bcap0x}. 
\end{proof}

\section{Proof of Theorem~\ref{thm.cov}}
\label{sec:thm.cov}
The proof is based on the following lemma. 
\begin{lem} \label{lem.AB}
There exist positive constants $c$ and $C$, such that for any finite and nonempty $K_1,K_2\subset \mathbb Z^d$, 
satisfying 
\begin{equation}\label{hyp.K1K2}
\rm{dist}(K_1,K_2)\ge C\cdot \rm{diam}(K_1),
\end{equation}
one has 
\begin{align}\label{eq.lem.AB}
 \sum_{x\in K_1} \mathbb P(\mathcal T^x_- \cap K_1 = \emptyset, \mathcal T^x_+ \cap K_2\neq \emptyset)
 \le C   \left\{   \frac{\rm{BCap}(K_1) \cdot \rm{BCap}(K_2)}{\rm{dist}(K_1,K_2)^{d-4}} 
+   \rm{BCap}(K_2)\cdot e^{-c\frac{\rm{dist}(K_1,K_2)}{\rm{diam}(K_1)} } \right\}. 
\end{align}
\end{lem}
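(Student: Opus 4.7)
The plan is to use a spine decomposition of the future tree $\mathcal T^x_+$ together with exit-time estimates for the simple random walk along the future spine. Let $(\Gamma_n)_{n\ge 0}$ denote the walk on the future spine of $\mathcal T$ with $\Gamma_0=x$, and let $\widetilde T_n^{(+)}$ be the adjoint tree attached to the $n$-th spine vertex on its right side, so that
$$\mathcal T^x_+ = \{\Gamma_n:n\ge 0\}\cup\bigcup_{n\ge 0}\widetilde T_n^{(+),\Gamma_n}.$$
A key structural point is that, conditionally on the spine walk and on the splitting of each spine vertex's $\mu_{\rm sb}$-distributed offspring into left, right and special children, the left subtrees (which constitute $\mathcal T^x_-$) and the right subtrees are mutually independent. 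This conditional independence will let the past event $\{\mathcal T^x_-\cap K_1=\emptyset\}$ be decoupled from all future-side hitting computations.

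Take $R\asymp {\rm diam}(K_1)$ so that, for $C$ large enough in~\eqref{hyp.K1K2}, the ball $B(K_1,R)$ sits at distance $\ge{\rm dist}(K_1,K_2)/2$ from $K_2$, and fix the threshold $T^\star:={\rm dist}(K_1,K_2)\cdot{\rm diam}(K_1)$. Let $\tau:=\inf\{n\ge 0:\Gamma_n\notin B(K_1,R)\}$. A Gaussian exit-time estimate for simple random walk yields $\mathbb P(\tau>t)\le c_1 e^{-c_2 t/R^2}$ for $t\ge R^2$, and in particular $\mathbb P(\tau>T^\star)\lesssim e^{-c\,{\rm dist}(K_1,K_2)/{\rm diam}(K_1)}$, already matching the exponential factor in~\eqref{eq.lem.AB}. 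I split $\{\mathcal T^x_+\cap K_2\ne\emptyset\}$ into three cases: (A) $\tau\le T^\star$ and some $\widetilde T_n^{(+)}$ with $n<\tau$ hits $K_2$; (B) $\tau\le T^\star$, (A) fails, and the continuation of $\mathcal T^x_+$ past $s_\tau$ hits $K_2$; (C) $\tau>T^\star$.

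Case (A) is handled by a first-moment bound: Corollary~\ref{cor.hit} gives each adjoint tree a hit probability $\le C\,{\rm BCap}(K_2)/{\rm dist}(K_1,K_2)^{d-2}$ (since $\Gamma_n$ lies at distance $\ge {\rm dist}/2$ from $K_2$), and summing over the at most $T^\star$ indices while invoking the above conditional independence yields a contribution bounded by $T^\star\cdot{\rm BCap}(K_1){\rm BCap}(K_2)/{\rm dist}^{d-2}\lesssim {\rm BCap}(K_1){\rm BCap}(K_2)/{\rm dist}^{d-4}$, absorbed into the first term of~\eqref{eq.lem.AB}. Case (B) uses that the continuation past $s_\tau$ is distributed as an invariant future tree rooted at $\Gamma_\tau$, so the $\mathcal T_+$-analog of Corollary~\ref{cor.hit} (proven by the same dyadic decomposition) bounds its hit probability by $\lesssim {\rm BCap}(K_2)/{\rm dist}^{d-4}$, again producing the first term.

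Case (C) produces the exponential error. The (B)-type subcase is simply bounded by $\mathbb P(\tau>T^\star)\cdot{\rm BCap}(K_2)/{\rm dist}^{d-4}$, which is absorbed into the first term. The (A)-type subcase requires the refined estimate $\mathbb E[\tau\,\mathbf 1_{\tau>T^\star}]\lesssim T^\star e^{-c\,{\rm dist}/{\rm diam}}$ obtained by integrating the exponential tail of $\tau$; combined with the first-moment hit bound and the standard capacity estimate ${\rm BCap}(K_1)\lesssim {\rm diam}(K_1)^{d-4}$ (monotonicity of ${\rm BCap}$ together with the bound on a ball), it yields a contribution $\lesssim {\rm BCap}(K_2)e^{-c\,{\rm dist}/{\rm diam}}$, matching the second term. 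The main technical obstacle I anticipate is cleanly setting up the conditional independence of past and future trees given the spine data; this requires careful bookkeeping of the joint law of left/right/special offspring counts at each spine vertex, but should follow directly from the construction of the infinite invariant tree recalled in Section~2.2.
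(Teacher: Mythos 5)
Your overall strategy is recognizably close to the paper's: decompose $\mathcal T^x_+$ along the future spine, introduce an exit time $\tau$ from a blown-up neighborhood of $K_1$, split according to whether the hit happens before or after $\tau$, and extract the exponential from the tail of $\tau$. Cases (B) and (C)-(B), where the hit happens after $\tau$, are indeed the easy part and your treatment is essentially right (this corresponds to the second line of the paper's chain of inequalities, using the genuine independence of the infinite tree past time $\tau$ from everything before it, given $\Gamma_\tau$, together with Corollary~\ref{cor.hit}).

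The real gap is in Case (C)-(A), and you partly signal it yourself. You need a bound of the form
$$\sum_{x\in K_1}\mathbb E\big[\tau\,\mathbf 1\{\tau>T^\star,\ \mathcal T^x_-\cap K_1=\emptyset\}\big]\ \lesssim\ \mathrm{BCap}(K_1)\cdot T^\star\cdot e^{-c\,\mathrm{dist}/\mathrm{diam}},$$
and then you use $\mathrm{BCap}(K_1)\lesssim\mathrm{diam}(K_1)^{d-4}$ to close. But the inequality above is not a routine decoupling: the events $\{\tau>T^\star\}$ and $\{\mathcal T^x_-\cap K_1=\emptyset\}$ are both functions of the spine walk and the past trees, and there is no elementary reason why the joint probability should factor as $\mathbb P(\tau>T^\star)\cdot e_{K_1}(x)$. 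If you try to avoid the factorization and simply bound $\sum_x\mathbb E[\tau\mathbf 1_{\tau>T^\star}]\lesssim |K_1|\,T^\star e^{-c\,\mathrm{dist}/\mathrm{diam}}$, the resulting estimate $|K_1|\,\mathrm{diam}\cdot\mathrm{BCap}(K_2)\,e^{-c\,\mathrm{dist}/\mathrm{diam}}/\mathrm{dist}^{d-3}$ is \emph{not} $\lesssim\mathrm{BCap}(K_2)\,e^{-c'\,\mathrm{dist}/\mathrm{diam}}$ when $\mathrm{dist}\asymp\mathrm{diam}$ (take $K_1$ a full ball, $|K_1|\asymp\mathrm{diam}^d$). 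So you genuinely need the past-avoidance factor inside the expectation, and you have not explained how to extract it. This is the heart of the lemma's proof, not a bookkeeping step.

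The paper resolves this with a different and quite specific mechanism. After reducing to a quantity of the form $\sum_{x\in K_1}\mathbb P\big(\tau'\ge kR^2,\ \mathcal F^x_-[0,\tau']\cap K_1=\emptyset\big)$ (with $\tau'$ the last visit to $B(0,DR)$ before exiting $B(0,2DR)$), one \emph{reverses time on the spine} and identifies, via Proposition~\ref{Zhu.hit2} and the remark following it, the spine together with its past (left) adjoint trees conditioned to avoid $K_1$ as the path of first visit of a critical branching random walk started from $w=\Gamma(\tau')$ that hits $K_1$. The sum over $x\in K_1$ is then exactly the total hitting probability of $K_1$ from $w$ with generation constraint $\ge kR^2$, whose exponential decay follows from the Gaussian tail of the exit time combined with Kolmogorov's estimate~\eqref{Kolmogorov} for survival of a critical tree; and the sum over $w$ produces the Newtonian capacity of $B(0,DR)\asymp R^{d-2}$. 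It is this time-reversal identity that both supplies the correct $\mathrm{BCap}(K_1)$ (through the sum over $x$) and the exponential factor simultaneously, and it has no analogue in your proposal.

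A secondary, smaller point: even for Case (A), the ``conditional independence of left and right trees given the spine'' is not exact, since the left and right offspring counts at a spine vertex are jointly determined by a single $\mu_{\mathrm{sb}}$ draw and are therefore correlated. The paper sidesteps this by only constraining the past up to $\tau'$ and by a union-bound/first-moment argument over future subtrees; making your version of this precise requires an explicit computation showing the correlation only costs a constant (which is true here, but not ``directly from the construction'').
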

\begin{proof}
Assume without loss of generality that $0\in K_1$. Let $R=\textrm{diam}(K_1)$, and let $D\ge 1$ be large enough, so that 
\begin{equation}\label{escape}
\mathbb P(\mathcal T^z_- \cap K_1 \neq \emptyset) \le 1/2, \qquad \text{for any }z\in \partial B(0,DR).
\end{equation}
Note that $D$ can be chosen independently of $K_1$, by Proposition~\ref{Zhu.hit}. 
Now fix $x\in K_1$, and denote by $\Gamma=(\Gamma_n)_{n\ge 0}$ the simple random walk starting from $x$, indexed by the set of vertices on the spine of $\mathcal T_-$.
Define 
$$\tau= \inf \{n \ge 0 : \Gamma_n \in \partial B(0,2DR)\},$$
and let $\tau'$ be the last time $\Gamma$ is in the ball $B(0,DR)$ before time $\tau$, that is formally
$$\tau' = \sup\{ n \le \tau : \Gamma_n \in B(0,DR)\}.$$
For $t>0$, let $\mathcal F_+^x[0,t]$ be the forest of critical trees in the future of $\mathcal T^x$, which are rooted at the vertices on the spine before time $t$, and similarly for $\mathcal F_-^x[0,t]$. Assume that the constant $C$ in~\eqref{hyp.K1K2} is large enough, so that $\rm{dist}(K_1,K_2) \ge 3DR$. Then by Corollary~\ref{cor.hit}, and using also~\eqref{escape} at the third line, one has   
\begin{align*}
&  \mathbb P(\mathcal T^x_- \cap K_1 = \emptyset, \mathcal T^x_+ \cap K_2\neq \emptyset)  \le  \mathbb P(\mathcal F^x_-[0,\tau] \cap K_1 = \emptyset, \mathcal T^x_+ \cap K_2\neq \emptyset)\\
& \le  \mathbb P(\mathcal F^x_-[0,\tau] \cap K_1 = \emptyset, \mathcal F^x_+[0,\tau] \cap K_2\neq \emptyset) + C \, \mathbb P(\mathcal F^x_-[0,\tau] \cap K_1 = \emptyset)\cdot \frac{\rm{BCap}(K_2)}{\rm{dist}(K_1,K_2)^{d-4}}\\
& \lesssim  \mathbb E\Big[\tau \cdot \mathbf 1\{\mathcal F^x_-[0,\tau'] \cap K_1 = \emptyset\}\Big] \cdot  \frac{\rm{BCap}(K_2)}{\rm{dist}(K_1,K_2)^{d-2}} +  e_{K_1}(x) \cdot \frac{\rm{BCap}(K_2)}{\rm{dist}(K_1,K_2)^{d-4}}. 
\end{align*} 
Note that after summing over $x\in K_1$, the second term on the right hand side will be bounded by the first term in the right-hand side of~\eqref{eq.lem.AB}. Thus it only remains to consider the first term in the right-hand side above. 
By Lemma 3.1 in~\cite{ASS23}, there exists a constant $c>0$, such that for any $w\in \partial B(0,DR)$,  
$$ \mathbb E_w\left[ H_{\partial B(0,2DR)} \cdot \mathbf 1\{H_{\partial B(0,2DR)} < H_{B(0,DR)}^+\}\right]  \le 
c\, R^2\cdot  \mathbb P_w(H_{\partial B(0,2DR)} < H_{B(0,DR)}^+).$$
Together with~\eqref{escape} and the Markov property this yields for any $x\in K_1$, 
$$
 \mathbb E\Big[\tau \cdot \mathbf 1\{\mathcal F^x_-[0,\tau'] \cap K_1 = \emptyset\}\Big] \le \mathbb E\Big[\tau' \cdot \mathbf 1\{\mathcal F^x_-[0,\tau'] \cap K_1 = \emptyset\}\Big] + 2 c\, e_{K_1}(x) R^2.
$$
Since $R^2 \lesssim \textrm{dist}(K_1,K_2)^2$, it again only remains to consider the first term in the right-hand side above. 
Relying once more on~\eqref{escape}, yields  
$$\mathbb E\Big[\tau' \cdot \mathbf 1\{\mathcal F^x_-[0,\tau'] \cap K_1 = \emptyset\}\Big]  \le 2 \textrm{dist}(K_1,K_2)^2\cdot e_{K_1}(x)  + 
\mathbb E\Big[\tau' \cdot \mathbf 1\{\tau'\ge \textrm{dist}(K_1,K_2)^2, \mathcal F^x_-[0,\tau'] \cap K_1 = \emptyset\}\Big],$$
and one is left to bound the second term on the right-hand side above. 
Observe first that by choosing $D$ large enough, one can always ensure that for any $w \in \partial B(0,DR)$, 
\begin{equation}\label{tildee}
\mathbb P_w(H_{\partial B(0,2DR)} < H_{B(0,DR)}^+)\le 2 \, \widetilde e_{B(0,DR)}(w). 
\end{equation}
Now, letting $k_0 = \lfloor \tfrac{\textrm{dist}(K_1,K_2)^2}{R^2}\rfloor$, one has 
$$\mathbb E\Big[\tau' \cdot \mathbf 1\{\tau'\ge \textrm{dist}(K_1,K_2)^2, \mathcal F^x_-[0,\tau'] \cap K_1 = \emptyset\}\Big]
\lesssim R^2\cdot \sum_{k\ge k_0} \mathbb P\Big(\tau'\ge k R^2, \mathcal F^x_-[0,\tau'] \cap K_1 = \emptyset\Big). $$ 
Then for any $k\ge k_0$, by reversing time on the spine, and using Proposition~\ref{Zhu.hit2}, one can write using also~\eqref{tildee} 
for the last inequality, 
\begin{align*}
&  \sum_{x\in K_1} 
\mathbb P\Big(\tau'\ge k R^2, \mathcal F^x_-[0,\tau'] \cap K_1 = \emptyset\Big)  = \sum_{w\in \partial B(0,DR)} \sum_{x\in K_1}  \mathbb P\Big(\tau'\ge k R^2, \mathcal F^x_-[0,\tau'] \cap K_1 = \emptyset, \Gamma(\tau')= w\Big) \\
 &\le 2 \sum_{w\in \partial B(0,DR)} \widetilde e_{B(0,DR)}(w) \cdot \mathbb P\big(\exists u \in \mathcal T_c : |u| \ge kR^2, \, S^w_u\in  K_1, \, S^w_v\in B(0,2DR) \textrm{ for all }v\le u\big), 
\end{align*}
where in the last probability $(S^w_u)_{u\in \mathcal T_c}$ denotes a walk indexed by a critical tree $\mathcal T_c$, starting from $w$, and $v\le u$ means that $v$ is a vertex on the geodesic between the root of $\mathcal T_c$ and $u$. Conditioning next on the tree up to generation $(k-1)R^2$, and using~\eqref{Kolmogorov}, we get that for any $w\in \partial B(0,DR)$, 
\begin{align*}
& \mathbb P\big(\exists u \in \mathcal T_c : |u| \ge kR^2, \, S^w_u\in  K_1, \, S^w_v\in B(0,2DR) \textrm{ for all }v\le u\big) \\
& \lesssim \frac {\mathbb P_w(H_{\partial B(0,2DR)} \ge (k-1)R^2)}{R^2} \cdot \mathbb E[Z_{(k-1)R^2}]  
\lesssim  \frac 1{R^2} \cdot \exp(- c k),
\end{align*}
for some constant $c>0$. Altogether, and using that the Newtonian capacity of $B(0,DR)$ is of order $R^{d-2}$, we obtain~\eqref{eq.lem.AB}, as wanted. 
\end{proof}

\begin{proof}[Proof of Theorem~\ref{thm.cov}] 
The firt part of the argument is the same as in~\cite{Sz10}, so let us briefly recall the main lines. Let $K = K_1 \cup K_2$. First observe that 
$$\mu_{K,u} = \mu_{1,1} + \mu_{1,2} + \mu_{2,1} + \mu_{2,2}, $$ 
where 
$$\mu_{1,1} = \mathbf 1\{w(0)\in K_1, H_{K_2}(w) =\infty\} \cdot \mu_{K,u}, \quad \mu_{1,2} = \mathbf 1\{w(0)\in K_1, H_{K_2}(w) <\infty\} \cdot \mu_{K,u}, $$
with similar formula for $\mu_{2,2}$ and $\mu_{2,1}$. In particular $\mu_{1,1}$, $\mu_{1,2}$, $\mu_{2,2}$ and $\mu_{2,1}$ are independent Poisson point processes, and by definition the events $E$ and $F$ are measurable  functions of these point processes. More precisely, one can find measurable functions $f$ and $g$, such that 
$$\mathbf 1_E= f(\mu_{1,1} + \mu_{1,2} + \mu_{2,1}), \quad \textrm{and}\quad  
\mathbf 1_F = g(\mu_{2,2} + \mu_{1,2} + \mu_{2,1}). $$  
Consequently, one has 
$$|\textrm{Cov}(E,F)|\le 2 (\mathbb Q( \mu_{1,2} \neq 0) + \mathbb Q(\mu_{2,1}\neq 0)),$$
where we recall that $\mathbb Q$ denotes the law of the Branching Interlacements point process. Moreover, by definition one has 
\begin{align*}
\mathbb Q(\mu_{1,2}\neq 0) & = 1 - e^{-u\cdot  Q_{K_1}(0\le H_{K_2}(w)<\infty)} \le u \cdot Q_{K_1}(0\le H_{K_2}(w)<\infty) \\
&  \le u \sum_{x\in K_1} \mathbb P(\mathcal T_-^x\cap K_1 = \emptyset, \mathcal T^x_+ \cap K_2 <\infty). 
\end{align*}
Note now that one can always assume that~\eqref{hyp.K1K2} is satisfied, as otherwise the result is straightforward. 
Hence we can apply Lemma~\ref{lem.AB}, which provides the desired bound for the sum above. Similarly, one has 
$$\mathbb Q(\mu_{2,1}\neq 0) \le u \sum_{x\in K_2} \mathbb P(\mathcal T_-^x\cap K_2 = \emptyset, \mathcal T^x_+ \cap K_1 <\infty). $$ 
Now assume without loss of generality that $0\in K_1$, and for $i\ge 0$, let 
$$K_{2,i}= \{z\in K_2: 2^i \rm{dist}(K_1,K_2) \le \|z\| \le 2^{i+1} \rm{dist}(K_1,K_2)\}.$$ 
For each $i$, $K_{2,i}$ can be further subdivided in a finite number of subsets, say $K_{2,i,j}$, $j\in J$, with $|J|\le C_0^d$, for some constant $C_0>0$, such that $\rm{dist}(K_1,K_{2,i,j}) \ge C \cdot \rm{diam}(K_{2,i,j})$, for any $i,j$. Then we can write, using Lemma~\ref{lem.AB}, 
\begin{align*}
& \sum_{x\in K_2} \mathbb P(\mathcal T_-^x\cap K_2 = \emptyset, \mathcal T^x_+ \cap K_1 <\infty) \le \sum_{i\ge 0}\sum_{j\in J} \sum_{x\in K_{2,i,j}} \mathbb P(\mathcal T_-^x\cap K_{2,i,j} = \emptyset, \mathcal T^x_+ \cap K_1 <\infty)\\
 & \le C C_0^d \sum_{i\ge 0} \left\{
  \frac{\rm{BCap}(K_1) \cdot \rm{BCap}(K_2)}{2^{i(d-4)}\rm{dist}(K_1,K_2)^{d-4}} 
+   \rm{BCap}(K_2)\cdot e^{-c2^i\frac{\rm{dist}(K_1,K_2)}{\rm{diam}(K_1)} } \right\} \\ 
& \lesssim \frac{\rm{BCap}(K_1) \cdot \rm{BCap}(K_2)}{\rm{dist}(K_1,K_2)^{d-4}} 
+   \rm{BCap}(K_2)\cdot e^{-c\frac{\rm{dist}(K_1,K_2)}{\rm{diam}(K_1)} }, 
\end{align*}
concluding the proof of the theorem. 
\end{proof}

\section{Proof of Theorem~\ref{thm:pt}} 
\label{sec:thm:pt}
We prove here Theorem~\ref{thm:pt}. For the proof of the upper bound $u_*<\infty$, one can simply adapt the arguments of~\cite{Sz10}, as we explain in Section~\ref{subsec:u<infty} below. The main difficulty arises in the proof of the lower bound $u_*>0$. Actually, in dimension $d\ge 9$, one could also rely on the relatively soft arguments of~\cite{Sz10}, but the lower dimensional cases are substantially more difficult. More specifically, we use the same scheme of proof as in~\cite{R}, which is partly inspired by~\cite{Sz12}, but as mentioned in the introduction the whole matter in our setting is to prove that some sufficiently high exponential moment of the number of boxes visited by a tree-indexed random walk is finite. In the case of a simple random walk, this followed from the fact that this random variable could be stochastically dominated by a geometric random variable using the Markov property of random walks. Here we have to use instead some intricate inductive analysis, which is conducted in Section~\ref{sec:u>0}. First, in Section~\ref{sec:prel} we gather preparatory results, and recall the main construction from~\cite{R,Sz12}, which consists in a set of boxes organized in a hierarchical manner, that the random set under consideration (either the vacant set in the proof of the upper bound, or the interlacement set for the lower bound)  has to all intersect.

\subsection{Preliminaries and notation}
\label{sec:prel}
First, we recall a result proved in~\cite{ASS23}: there exist positive constants $c$ and $C$ (possibly depending on the dimension), such that for any finite nonempty $K\subset  \mathbb Z^d$, and any $x\in \mathbb Z^d$, 
$$c \le \sum_{y\in K} G(y-x) e_K(y)\le C.$$
By summing these inequalities over $x\in K$, we deduce that for any finite $K$,
\begin{equation}\label{lowerBCap}
\frac{c|K|}{\max_{x\in K} \sum_{y\in K} G(y-x)}\le \textrm{BCap}(K) \le \frac{C|K|}{\min_{x\in K} \sum_{y\in K} G(y-x)}. 
\end{equation}
Now, similarly as in~\cite{R,Sz12} we define  
$T_{(n)}=\{1,2\}^n$ (in particular $T_{(0)} = \emptyset$), and $T_n= \cup_{k=0}^n T_k$,  the binary tree of depth $n$. For $m=(\xi_1,\dots,\xi_k)\in T_{(k)}$, we let 
$m_1 = (\xi_1,\dots,\xi_k,1)$ and $m_2 =(\xi_1,\dots,\xi_k,2)$ be the two children of $m$ in $T_{(k+1)}$. Then let $L_0\ge 1$ be given, and define   
\begin{equation}\label{Ln}
L_n = L_0\cdot 6^n, \quad n\ge 0,
\end{equation}
as well as $\mathcal L_n = L_n\mathbb Z^d$. 
We say that a map 
$\mathcal M : T_n \to \mathbb Z^d$, is a \textbf{proper embedding} of $T_n$ with root at $x\in \mathbb Z^d$, if 
\begin{enumerate}
    \item $\mathcal M(\emptyset)=x$; 
    \item for all $0\le k\le n$ and $m\in T_{(k)}$, we have $\mathcal M(m) \in \mathcal L_{n-k}$; 
    \item for all $0\le k\le n$ and $m\in T_{(k)}$, we have 
    $$|\mathcal M(m_1) - \mathcal M(m)| = L_{n-k}, \quad |\mathcal M(m_2) - \mathcal M(m)| = 2L_{n-k}. $$
\end{enumerate}
We let $\Lambda_{n,x}$ be the set of proper embeddings of $T_n$ into $\mathbb Z^d$ with root at $x$. The three following lemmas are proved in~\cite{R}. 
\begin{lem} \label{lem:count}
Let $d\ge 1$. There exists a constant $\mathcal C_d>0$, such that for any $n\ge 1$ and $x\in \mathbb Z^d$, the number of proper embeddings of $T_n$ into $\mathbb Z^d$ with root at $x$ is equal to $|\Lambda_{n,x}|= \mathcal C_d^{2^n-1}$. 
\end{lem}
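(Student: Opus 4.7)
The plan is a straightforward top-down count of the embeddings, processing the tree level by level and showing that at every internal vertex the number of admissible placements of its two children is a fixed constant depending only on the dimension.

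First I would count the internal vertices of $T_n$: the set $T_{(k)}$ has cardinality $2^k$, and the vertices that have children are exactly those at depth $k\in\{0,\dots,n-1\}$, so there are $\sum_{k=0}^{n-1}2^k=2^n-1$ internal vertices. Since conditions (2) and (3) only constrain a vertex relative to its parent, once a value of $\mathcal{M}(m)$ has been fixed for some $m\in T_{(k)}$ the two choices $\mathcal{M}(m_1)$ and $\mathcal{M}(m_2)$ can be made independently of the rest of the tree. Consequently, the total count factorizes as a product over internal vertices, and it suffices to show that the number of admissible $(\mathcal{M}(m_1),\mathcal{M}(m_2))$ is the same positive constant $\mathcal{C}_d$ for every internal vertex, regardless of $k$ and of $\mathcal{M}(m)$.

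The key observation is that $L_{n-k}=6L_{n-k-1}$, hence $\mathcal{L}_{n-k}\subset\mathcal{L}_{n-k-1}$; in particular $\mathcal{M}(m)\in \mathcal{L}_{n-k-1}$, so any child $\mathcal{M}(m_i)\in\mathcal{L}_{n-k-1}$ can be written as $\mathcal{M}(m)+L_{n-k-1}z$ with $z\in\mathbb Z^d$. The distance conditions then become $\|z\|=6$ for $m_1$ and $\|z\|=12$ for $m_2$. Setting
\[
a_d=\#\{z\in\mathbb Z^d:\|z\|=6\},\qquad b_d=\#\{z\in\mathbb Z^d:\|z\|=12\},
\]
both quantities are finite (the Euclidean spheres have bounded intersection with $\mathbb Z^d$) and strictly positive (the vectors $(6,0,\dots,0)$ and $(12,0,\dots,0)$ exhibit elements). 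Hence for every internal vertex there are exactly $a_d\cdot b_d$ admissible choices of the pair of children, and this number is independent of $k$, of $m$ and of the already-chosen value $\mathcal{M}(m)$.

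Multiplying over the $2^n-1$ internal vertices yields
\[
|\Lambda_{n,x}|=(a_d\cdot b_d)^{2^n-1}=\mathcal{C}_d^{2^n-1},
\]
with $\mathcal{C}_d:=a_d\cdot b_d>0$, which is the announced formula. The argument contains no real obstacle: the main point to be careful about is the inclusion $\mathcal{L}_{n-k}\subset\mathcal{L}_{n-k-1}$ that follows from the choice of the geometric scaling $L_n=L_0\cdot 6^n$, which makes the sphere-counting constants independent of the scale $n-k$; translation invariance of $\mathbb Z^d$ takes care of independence from $\mathcal{M}(m)$.
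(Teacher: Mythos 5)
Your proof is correct and follows the standard recursive counting argument (the paper defers this lemma to R\'ath's paper~\cite{R}, where essentially the same argument appears). One small notational point: the paper specifies, just after stating the lemma, that $|\cdot|$ in the embedding conditions denotes the $\ell_\infty$-norm, not the Euclidean norm $\|\cdot\|$; so your constants $a_d$ and $b_d$ should be $\#\{z\in\mathbb Z^d : |z|_\infty=6\}$ and $\#\{z\in\mathbb Z^d : |z|_\infty=12\}$. This only changes the value of $\mathcal C_d$, not the structure or validity of the argument: the key points — that $\mathcal L_{n-k}\subset\mathcal L_{n-k-1}$ because $L_{n-k}=6L_{n-k-1}$, that the child placements are constrained only relative to the parent (giving independence and hence a product over the $2^n-1$ internal vertices), and that the per-vertex count is scale- and translation-invariant — are all exactly right.
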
 
Let $S(x,R)= \{y\in \mathbb Z^d : |y-x| = R\}$, with $|\cdot|$ denoting the $\ell_\infty$-norm.
A path $\gamma : \{0,\dots,\ell\}\to \mathbb Z^d$, is said to be $*$-connected, if $|\gamma(i)- \gamma(i-1)|=1$, for all $i\ge 1$. We let $\mathcal R(\gamma) = \{\gamma(0),\dots,\gamma(\ell)\}$ be its range. 
\begin{lem}\label{lem:*path}
If $\gamma$ is a $*$-connected path in $\mathbb Z^d$, $d\ge 2$, and $x\in \mathcal L_n$, is such that 
$$\mathcal R(\gamma) \cap S(x,L_n-1) \neq \emptyset \quad \textrm{and}\quad \mathcal R(\gamma)\cap S(x,2L_n)\neq \emptyset,$$
then there exists $\mathcal M\in \Lambda_{n,x}$, such that 
$$\mathcal R(\gamma) \cap S(\mathcal M(m),L_0-1)\neq \emptyset  \quad \textrm{for all }m\in T_{(n)}. $$ 
\end{lem}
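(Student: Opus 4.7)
The proof proceeds by induction on $n$. The base case $n=0$ is immediate: $T_{(0)}=\{\varnothing\}$, the unique proper embedding sends the root to $x$, and the required intersection with $S(x,L_0-1)$ is precisely the hypothesis.

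For the inductive step at level $n\ge 1$, the plan is to construct the two children of the root and then reduce to level $n-1$ on appropriate sub-paths of $\gamma$. Since $\gamma$ is $*$-connected, the $\ell_\infty$-distance $t\mapsto |\gamma(t)-x|$ varies by at most one per step, so a discrete intermediate value argument applied to the hypotheses $\mathcal R(\gamma)\cap S(x,L_n-1)\neq\emptyset$ and $\mathcal R(\gamma)\cap S(x,2L_n)\neq\emptyset$ produces times $t_1<t_2$ (up to reversing $\gamma$) with $z_1:=\gamma(t_1)\in S(x,L_n)$ and $z_2:=\gamma(t_2)\in S(x,2L_n)$. I would then define $y_1,y_2\in\mathcal L_{n-1}$ by coordinate-wise rounding of $z_i-x$ to the nearest multiple of $L_{n-1}$. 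Because $L_n=6L_{n-1}$ and $2L_n=12L_{n-1}$ are already multiples of $L_{n-1}$, any coordinate of $z_i-x$ of maximal modulus rounds to itself, so the construction yields $|y_1-x|=L_n$, $|y_2-x|=2L_n$ and $|z_i-y_i|\le L_{n-1}/2$.

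Next, the plan is to apply the inductive hypothesis to the sub-path $\gamma':=\gamma|_{[t_1,t_2]}$ with centers $y_1$ and $y_2$ at level $n-1$. By the triangle inequality,
\[ |z_2-y_1|\ \ge\ |z_2-x|-|y_1-x|\ =\ L_n\ >\ 2L_{n-1}\qquad\text{and}\qquad |z_1-y_2|\ \ge\ 2L_n-L_n\ >\ 2L_{n-1}, \]
so $\gamma'$ starts in $B(y_1,L_{n-1}-1)$ and ends outside $B(y_1,2L_{n-1})$, and symmetrically for $y_2$, provided $L_{n-1}/2\le L_{n-1}-1$; the only residual case $L_0=n=1$ is handled by taking $y_i=z_i$ directly, since then $\mathcal L_0=\mathbb Z^d$. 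A further application of discrete intermediate value to $|\gamma'(\cdot)-y_i|$ then yields points of $\mathcal R(\gamma')$ on each of the spheres $S(y_i,L_{n-1}-1)$ and $S(y_i,2L_{n-1})$, so the inductive hypothesis applies and provides proper embeddings $\mathcal M_i\in\Lambda_{n-1,y_i}$ with $\mathcal R(\gamma)\cap S(\mathcal M_i(m'),L_0-1)\neq\emptyset$ for every $m'\in T_{(n-1)}$.

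The final step is to glue $\mathcal M_1$ and $\mathcal M_2$ into the desired $\mathcal M\in\Lambda_{n,x}$ by setting $\mathcal M(\varnothing):=x$ and $\mathcal M((i,m')):=\mathcal M_i(m')$ for $i\in\{1,2\}$ and $m'\in T_{n-1}$. The three defining properties of a proper embedding follow straightforwardly: the depth condition at the root from $x\in\mathcal L_n\subset\mathcal L_{n-1}$, the distance condition at the root from $|y_i-x|\in\{L_n,2L_n\}$ by construction, and everything inside each sub-tree by induction. The leaf intersection at level $n$ is a direct rewriting of the one at level $n-1$ for $\mathcal M_1$ and $\mathcal M_2$. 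The only really delicate point is the rounding: one must pick $y_1$ and $y_2$ simultaneously on the prescribed sphere around $x$ and close enough to $z_1,z_2$ so that the sub-path $\gamma'$ threads the small annulus at scale $L_{n-1}$, and this is precisely what the sixfold ratio $L_n/L_{n-1}=6$ in the definition~\eqref{Ln} is designed to guarantee.
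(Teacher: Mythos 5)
Your proof is correct. The paper itself does not reprove this lemma (it simply cites R\'ath~\cite{R}), and your induction --- using the discrete intermediate value property to locate a point of $\gamma$ on $S(x,L_n)$ and one on $S(x,2L_n)$, rounding these coordinatewise to $\mathcal L_{n-1}$ to produce the two children $y_1,y_2$ (so that $|y_1-x|=L_n$, $|y_2-x|=2L_n$, $|z_i-y_i|\le L_{n-1}/2$), checking via the triangle inequality that the intermediate sub-path $\gamma'$ threads both annuli at scale $L_{n-1}$, recursing, and gluing --- is precisely the standard argument there, so the verification of the proper-embedding axioms and the leaf-intersection property goes through as you describe.
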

For $m \in T_{(n)}$, and $1\le k\le n$, let $T_{(n)}^{m,k}$ be the set of elements of $T_{(n)}$ whose least common ancestor with $m$ lies in $T_{(k)}$. 
\begin{lem}\label{lem:Lambdadist}
For any $n\ge 1$, $x\in \mathcal L_n$, $\mathcal M\in \Lambda_{n,x}$, $m\in T_{(n)}$, $k\ge 1$, $m'\in T_{(n)}^{m,k}$, any $y\in S(\mathcal M(m),L_0-1)$ and any $z\in S(\mathcal M(m'),L_0-1)$, one has $|y-z|\ge L_{k-1}$. 
\end{lem}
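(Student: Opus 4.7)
The plan is to reduce this geometric lemma to a bookkeeping exercise exploiting the factor-$6$ hierarchical structure $L_n = L_0 \cdot 6^n$ of a proper embedding. Since $y \in S(\mathcal M(m), L_0-1)$ and $z \in S(\mathcal M(m'), L_0-1)$, the $\ell_\infty$-triangle inequality gives
$$|y-z| \ge |\mathcal M(m)-\mathcal M(m')| - 2(L_0-1),$$
so it suffices to bound $|\mathcal M(m)-\mathcal M(m')|$ from below.

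Let $a$ denote the least common ancestor of $m$ and $m'$ in $T_n$, sitting at depth $n-k$, and let $a^{(m)}, a^{(m')}$ be its two distinct children on the descending geodesics to $m$ and $m'$, respectively. Property (3) in the definition of $\Lambda_{n,x}$ forces one of the displacements $\mathcal M(a^{(m)})-\mathcal M(a)$ and $\mathcal M(a^{(m')})-\mathcal M(a)$ to have $\ell_\infty$-norm $L_k$ and the other $2L_k$, so by the reverse triangle inequality
$$|\mathcal M(a^{(m)})-\mathcal M(a^{(m')})| \ge L_k.$$
Iterating (3) along each of the two remaining descending paths (of length $k-1$), the total $\ell_\infty$-drift from $\mathcal M(a^{(m)})$ down to the leaf $\mathcal M(m)$ is bounded by the telescoping geometric sum $\sum_{i=1}^{k-1} 2 L_i = \tfrac{2}{5}(L_k - 6 L_0)$, and the same bound holds on the other side. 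Combining these estimates gives
$$|\mathcal M(m)-\mathcal M(m')| \ge L_k - \tfrac{4}{5}(L_k - 6 L_0) = \tfrac{L_k + 24 L_0}{5}.$$

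Plugging back into the first triangle inequality, the required bound $|y-z| \ge L_{k-1} = L_k/6$ reduces to the elementary inequality $\tfrac{L_k + 24 L_0}{5} - 2(L_0 - 1) \ge L_k/6$, which holds with slack $\tfrac{L_k}{30} + \tfrac{14 L_0}{5} + 2 > 0$. No probabilistic input is needed: the whole lemma is a direct geometric consequence of properties (1)--(3) of a proper embedding, and the only care required is in indexing the tree-levels correctly and carrying out the geometric-series arithmetic---this bookkeeping is the (minor) main obstacle.
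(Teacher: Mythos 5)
Your argument is correct, and the arithmetic checks out: with the reverse triangle inequality at the splitting edge giving $|\mathcal M(a^{(m)})-\mathcal M(a^{(m')})|\ge 2L_k-L_k=L_k$, and the geometric drift bound $\sum_{i=1}^{k-1}2L_i=\tfrac{2}{5}(L_k-6L_0)$ on each descending branch, one indeed gets $|\mathcal M(m)-\mathcal M(m')|\ge\tfrac{1}{5}(L_k+24L_0)$ and hence $|y-z|\ge\tfrac{L_k}{30}+\tfrac{14L_0}{5}+2+L_{k-1}>L_{k-1}$. The paper does not actually prove this lemma (it only cites~\cite{R}), so there is no in-paper argument to compare against; your direct geometric computation is the natural one and is surely essentially what~\cite{R} does.

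One thing you should make explicit: you place the least common ancestor of $m$ and $m'$ at depth $n-k$, whereas the paper's stated definition of $T_{(n)}^{m,k}$ puts it in $T_{(k)}$, i.e.\ at depth $k$. Your reading is the correct one---under the literal reading the lemma is false (for $k>n/2$ the actual separation, of order $L_{n-k}$, is smaller than the claimed $L_{k-1}$, since the deeper the common ancestor the closer the two leaves are), and the literal reading is also inconsistent with the way the lemma is used in the proof of Proposition~\ref{prop.Anu}, where $T_{(n)}^{m,k}$ is implicitly taken to have about $2^k$ elements, which only holds when the common ancestor sits at depth $n-k$. So you have tacitly corrected what appears to be a typo in the paper's definition; this should be flagged rather than passed over silently, both for the reader's sake and to make clear that your derivation does match what the rest of the paper relies on.
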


\subsection{Proof of $u_*<\infty$}\label{subsec:u<infty}
Here we can take $L_0= 1$ in the above construction. Then for $n\ge 1$, define 
$$A_n^u = \left\{
\begin{array}{c}
\textrm{there exists a nearest neighbor path in }\mathcal V^u \\
\textrm{that connects }S(0,L_n-1) \textrm{ to  }S(0,2L_n)
\end{array}\right\}. $$ 
A standard and direct argument of monotone convergence shows that to prove that  $u_*$ is finite, it suffices to prove the next proposition (see e.g.~\cite{R} for details).
\begin{prop}\label{prop.Anu}
Let $d\ge 5$. There exists $u_1<\infty$, such that for any $u>u_1$, there exists $q=q(d,u)\in (0,1)$, such that for any $n\ge 1$, 
$$\mathbb P(A_n^u) \le q^{2^n}. $$ 
\end{prop}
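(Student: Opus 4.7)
The plan is to run a Peierls-type union bound analogous to \cite{R,Sz12}: if $A_n^u$ occurs then $\mathcal V^u$ must simultaneously contain all $2^n$ leaves of some proper embedding $\mathcal M\in\Lambda_{n,0}$, and the identity \eqref{def.Iu1} converts this into an exponentially small probability, provided one can lower-bound the branching capacity of the leaf set uniformly in $\mathcal M$.

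On $A_n^u$ there is a nearest-neighbor path in $\mathcal V^u$ joining $S(0,L_n-1)$ and $S(0,2L_n)$; this path is a fortiori $*$-connected, and since $L_0=1$ makes $S(\mathcal M(m),L_0-1)$ the singleton $\{\mathcal M(m)\}$, Lemma~\ref{lem:*path} furnishes some $\mathcal M\in\Lambda_{n,0}$ whose $2^n$ leaves $K_{\mathcal M}:=\{\mathcal M(m):m\in T_{(n)}\}$ all lie in $\mathcal V^u$; these are pairwise distinct by Lemma~\ref{lem:Lambdadist}. Combining Lemma~\ref{lem:count} with a union bound and \eqref{def.Iu1} gives
\[
\mathbb P(A_n^u)\;\le\;\mathcal C_d^{2^n}\cdot\exp\!\Bigl(-u\,\inf_{\mathcal M\in\Lambda_{n,0}}\mathrm{BCap}(K_{\mathcal M})\Bigr),
\]
so the task reduces to proving a uniform bound $\mathrm{BCap}(K_{\mathcal M})\ge c_d\cdot 2^n$.

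For this I would invoke the left-hand inequality in \eqref{lowerBCap}, which reduces the matter to showing $\max_{x\in K_{\mathcal M}}\sum_{y\in K_{\mathcal M}}G(y-x)=O(1)$, uniformly in $\mathcal M$ and $n$. Fix $x=\mathcal M(m_0)$ and group the remaining leaves by the parameter $k\in\{1,\dots,n\}$ of Lemma~\ref{lem:Lambdadist}: there are at most $2^{k-1}$ leaves at level $k$, and each lies at distance at least $L_{k-1}=6^{k-1}$ from $x$. The asymptotic $G(z)\asymp\|z\|^{4-d}$ from \eqref{asympG} then yields
\[
\sum_{y\in K_{\mathcal M}\setminus\{x\}}G(y-x)\;\lesssim\;\sum_{k=1}^{n}2^{k-1}\cdot 6^{-(k-1)(d-4)}\;=\;\sum_{j=0}^{n-1}\bigl(2\cdot 6^{-(d-4)}\bigr)^{j},
\]
and the hypothesis $d\ge 5$ forces the common ratio $2\cdot 6^{-(d-4)}\le 2/6=1/3<1$, so the geometric series is bounded by a constant depending only on $d$. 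Adding the diagonal term $G(0)$ gives the desired $O(1)$ estimate, and hence $\mathrm{BCap}(K_{\mathcal M})\gtrsim 2^n$.

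Plugging back yields $\mathbb P(A_n^u)\le\bigl(\mathcal C_d\,e^{-c_du}\bigr)^{2^n}$, so choosing $u_1=c_d^{-1}\log(2\mathcal C_d)$ and $q(d,u)=\mathcal C_d\,e^{-c_du}$ finishes the proof for every $u>u_1$. The main (and essentially only) place where the dimension intervenes is the convergence of the geometric series above, which is the branching analogue of the random-interlacements estimate and explains why $d\ge 5$ plays the role of $d\ge 3$ in that setting. I do not expect any substantial obstacle beyond this routine computation, since the two inputs specific to Branching Interlacements, namely \eqref{def.Iu1} and \eqref{lowerBCap}, are already available.
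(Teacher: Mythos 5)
Your proposal is correct and follows essentially the same route as the paper's own proof: pass from $A_n^u$ to a proper embedding via Lemma~\ref{lem:*path}, union-bound over $\Lambda_{n,0}$ using Lemma~\ref{lem:count} and \eqref{def.Iu1}, and then lower-bound $\mathrm{BCap}$ of the leaf set via \eqref{lowerBCap} by controlling $\max_x\sum_y G(y-x)$ with Lemma~\ref{lem:Lambdadist} and \eqref{asympG}, where $d\ge5$ makes the geometric series $\sum_k 2^k/L_{k-1}^{d-4}$ converge. The paper's proof is the same computation, only phrased slightly more tersely.
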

\begin{proof}
For $n\ge 1$, and $\mathcal M\in \Lambda_{n,0}$, 
set $\mathcal X_{\mathcal M} = \cup_{m\in T_{(n)}}\mathcal M(m)$. Thanks to Lemma~\ref{lem:*path}, one has
\begin{align*}
\mathbb P(A_n^u) & \le \mathbb P\big(\bigcup_{\mathcal M\in \Lambda_{n,0}} \{\mathcal X_{\mathcal M} \subset \mathcal V^u\}\big)\stackrel{\eqref{def.Iu1}}{\le} \sum_{\mathcal M\in \Lambda_{n,0}}
\exp\big(-u\cdot \textrm{BCap}(\mathcal X_{\mathcal M})\big)\\
& \le {\mathcal C}_d^{2^n} \cdot \max_{\mathcal M\in \Lambda_{n,0}}
\exp\big(-u\cdot \textrm{BCap}(\mathcal X_{\mathcal M})\big), 
\end{align*}
using also Lemma~\ref{lem:count} for the last inequality. Now~\eqref{asympG} and Lemma~\ref{lem:Lambdadist} yield the existence of a constant $C>0$, such that for any $\mathcal M\in \Lambda_{n,0}$,  
$$\max_{x\in \mathcal X_{\mathcal M}} \sum_{y\in \mathcal X_{\mathcal M} } G(y-x) \le C \sum_{k=1}^n \frac{2^k}{L_{k-1}^{d-4}} \le 2C\sum_{k=1}^n \frac 1{3^{k-1}} \le 3C,$$ 
and hence by~\eqref{lowerBCap}, for some constant $c$ (independent of $\mathcal M$), 
$$\textrm{BCap}(\mathcal X_{\mathcal M}) \ge c\cdot 2^n. $$ 
Taking now $u$ large enough yields the desired estimate and concludes the proof. 
\end{proof}

\subsection{Proof of $u_*>0$}\label{sec:u>0}

Here we consider $F = \mathbb Z^2 \times \{0\}^{d-2}$ a two-dimensional subspace, and given $x\in \mathbb Z^d$, and $r\ge 0$, we let $S_F(x,r) = S(x,r)\cap F$. One reason for considering spheres in $F$ rather than on the whole space is that this way we get smaller sets, which are more difficult to hit by  branching random walks. More specifically,~\eqref{asympG} and~\eqref{lowerBCap} yield the following bounds on their branching capacity: for any $x\in \mathbb Z^d$, and $L_0\ge 1$, 
\begin{equation}\label{BCapSL0}
    \textrm{BCap}(S_F(x,L_0)) \asymp \left\{
    \begin{array}{ll}
    L_0 & \textrm{if }d\ge 6\\
    \frac{L_0}{1+\log(L_0)} & \textrm{if }d=5.
\end{array}
\right. 
\end{equation}
Another reason to work on a two-dimensional space, is that one can use duality. More precisely, by duality the event that there is no  path in the vacant set inside $F$ from say the boundary of $S_F(0,L_n)$ to infinity is the same as the event that $S_F(0,L_n)$ is surrounded by a $*$-path in $\mathcal I^u\cap F$. 
Consequently, it can be seen (see~\cite{R} for details) that to prove that $u_*$ is positive, it is sufficient to prove the next proposition. 
\begin{prop}\label{prop.Bnu}
Define for $x\in \mathcal L_n\cap F$, $n\ge 1$ and $u>0$, \begin{equation*}
B_{n,x}^u =\left\{ \begin{array}{cc}
\textrm{there exists a }*\textrm{-connected path in }\mathcal I^u\cap F\\
\textrm{that connects } S_F(x,L_n-1) \textrm{ to } S_F(x,2L_n) 
\end{array}\right\}. 
\end{equation*}
There exists $u_0>0$ and $\rho \in (0,1)$, such that for any $u<u_0$, any $n\ge 1$ and any $x\in \mathcal L_n \cap F$, 
$$\mathbb P(B_{n,x}^u) \le \rho^{2^n}. $$ 
\end{prop}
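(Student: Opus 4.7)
The overall scheme follows R\'ath~\cite{R} but the main exponential-moment estimate needs a new argument to cope with the branching. First, by the $*$-path lemma (Lemma~\ref{lem:*path}) applied inside the two-dimensional slice $F$, on the event $B_{n,x}^u$ there exists a proper embedding $\mathcal{M}\in \Lambda_{n,x}$ such that $\mathcal I^u\cap F_m\neq\emptyset$ for every $m\in T_{(n)}$, where $F_m:=S_F(\mathcal{M}(m),L_0-1)$. A union bound using Lemma~\ref{lem:count} gives
\[
\mathbb P(B_{n,x}^u)\le \mathcal{C}_d^{2^n}\cdot \max_{\mathcal{M}\in \Lambda_{n,x}} \mathbb P\Bigl(\mathcal I^u\cap F_m\neq \emptyset \text{ for every }m\in T_{(n)}\Bigr),
\]
so it suffices to bound the maximum by $\rho_1^{2^n}$ for some $\rho_1<1/\mathcal{C}_d$ that can be made as small as we wish, upon taking $u$ small and $L_0$ large.

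\textbf{Reduction to a single-trajectory exponential moment.} Fix $\mathcal{M}$ and set $K=\bigcup_{m\in T_{(n)}}F_m$. By \eqref{rep.IuK}, $\mathcal I^u\cap K$ is the union of ranges of $N^u_K$ independent tree-indexed walks $\mathcal{T}^{(1)},\dots,\mathcal{T}^{(N^u_K)}$ with starting points drawn from $e_K/\mathrm{BCap}(K)$ and conditioned on their past avoiding $K$, with $N^u_K$ Poisson of mean $u\cdot\mathrm{BCap}(K)$. Let $V(\tau)=\#\{m: \tau\cap F_m\neq\emptyset\}$. Since hitting every $F_m$ forces $\sum_i V(\mathcal T^{(i)})\ge 2^n$, Markov's inequality and the Laplace transform of a compound Poisson yield, for every $\lambda>0$,
\[
\mathbb P\Bigl(\mathcal I^u\cap F_m\neq\emptyset\ \forall m\Bigr)\le \exp\!\Bigl(-\lambda 2^n + u\cdot \mathrm{BCap}(K)\cdot (M(\lambda)-1)\Bigr),
\]
with $M(\lambda)$ the exponential moment of $V(\mathcal T^{(1)})$ under the entrance law. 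By \eqref{BCapSL0} and subadditivity of branching capacity, $\mathrm{BCap}(K)\lesssim 2^n L_0$ in $d\ge 6$ (and $\lesssim 2^n L_0/\log L_0$ in $d=5$). So the whole proof reduces to showing that $M(\lambda_0)$ is bounded by an absolute constant, uniformly in $n$, $x$ and $\mathcal M$, for some fixed $\lambda_0$ that we are free to take as large as we wish.

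\textbf{The hard step: uniform exponential moment for a single trajectory.} This is the core of the argument and the place where simple Markovian techniques from~\cite{R} break down. The goal is
\[
\sum_{y\in K}\mathbb E\Bigl[e^{\lambda_0\, V(\mathcal T^y)}\,\mathbf 1\{\mathcal T^y_-\cap K=\emptyset\}\Bigr]\le C\cdot \mathrm{BCap}(K).
\]
The plan is an induction on the depth $n$ of the hierarchy. Thanks to Lemma~\ref{lem:Lambdadist} the spheres $F_m$ sit at mutual distance $\ge L_{k-1}$ whenever their least common ancestor lies in $T_{(k)}$, so by Proposition~\ref{Zhu.hit}, Corollary~\ref{cor.hit} and \eqref{BCapSL0}, each branch of the tree-indexed walk has small, geometrically decaying probability of reaching a far-away sphere. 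The issue is that once the walk does hit a new sphere $F_{m'}$, the branching structure produces many sub-trajectories that may in turn visit further spheres. To handle this I use Proposition~\ref{Zhu.hit2}: on the event of hitting $F_{m'}$, the tree splits along the path of first visit $\Gamma$ into adjoint trees hanging to the left (conditioned on avoiding the remaining targets) and to the right (unconditioned). The induction step consists in (i) performing surgery on the spine to decorrelate the trees on the left of $\Gamma$ from those on the right and from the continuation of the spine, at a multiplicative cost bounded by a constant, and (ii) controlling the purely random-walk conditional problem that governs the conditioning of the leftward trees on avoiding the target set --- this is the main point where intricate calculations are needed and where the extra $L_0$ (or $L_0/\log L_0$) coming from $\mathrm{BCap}(K)$ is absorbed by the smallness of the per-visit hitting probability. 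Once decorrelation is achieved, each piece is again a tree-indexed walk of the same type, producing the recursive estimate that closes the induction.

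\textbf{Conclusion.} With $M(\lambda_0)\le C$ in hand, the bound in the reduction step becomes $\exp(-(\lambda_0-Cu L_0)2^n)$ (up to a harmless log in $d=5$). Choosing first $\lambda_0$ large enough compared to $\log\mathcal{C}_d$, then $u_0$ small enough (depending on $L_0$), one obtains $\mathbb{P}(B_{n,x}^u)\le \rho^{2^n}$ with $\rho\in (0,1)$, as required. The main obstacle is clearly the exponential-moment bound of Step 3; everything else is essentially bookkeeping around the standard hierarchical construction.
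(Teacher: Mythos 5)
Your reduction steps (union bound via Lemmas \ref{lem:count} and \ref{lem:*path}, the compound-Poisson/Campbell computation, the bound $\mathrm{BCap}(\mathcal X_{\mathcal M}^F)\lesssim 2^n L_0$, and the final choice $\lambda_0$ large, $L_0$ large, $u_0$ small) match the paper's proof of this proposition essentially line for line: the whole matter is indeed reduced to the paper's Proposition~\ref{prop.main}, which is your ``$M(\lambda_0)\le C$.''

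Where you diverge is in the plan for the hard step, and there the divergence matters. You propose to prove $M(\lambda_0)\le C$ \emph{by induction on the depth $n$ of the hierarchy}. The paper does not do this, and for a reason: a single tree-indexed trajectory started near one frame can visit frames in both top-level sub-hierarchies and at every intermediate scale, so you cannot treat the two depth-$(n-1)$ sub-hierarchies as separate subproblems without some uniform-in-$n$ input controlling how the walk distributes visits across scales --- which is precisely the quantity you are trying to bound. The induction on $n$ does not obviously close. The paper instead fixes the hierarchy once and for all, and inducts on the \emph{truncation level} $k$ in the count $\mathcal N_{\mathcal M}^\delta(\cdot)\wedge k$ (Lemma~\ref{lem:main1}). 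Each induction step exposes one ``first visit'' via Proposition~\ref{Zhu.hit2}, decouples the left and right adjoint forests as you say, and pushes all the multi-scale geometry into two auxiliary random-walk exponential-moment lemmas (Lemmas~\ref{lem:main2} and~\ref{lem:SRW}) over a fixed annular decomposition $(\mathcal S_i)_i$. You also do not mention the other structural ingredient of Proposition~\ref{prop.main}: the past-avoidance indicator $\mathbf 1\{\mathcal T_-^z\cap K=\emptyset\}$ is only retained up to the spine's exit time $\sigma_0$ of the first annulus, after which it is discarded and a $\sup_y$ bound is taken; this splitting is what makes the $d=5$ case (where BCap carries an extra $1/\log L_0$) go through. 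So: your Steps 1--2 are correct and identical to the paper's; Step~3 is a sketch whose central inductive device is different from the paper's and, as stated, has a concrete gap, and the remaining technical apparatus (annular decomposition, truncated-count induction, the two random-walk lemmas, the $\sigma_0$-splitting of the past) is not supplied.
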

The proof of this result relies on Proposition~\ref{prop.main} below, which is the main original contribution of this paper. In order to state it, one needs some additional notation. 
Given $n\ge 1$ and $x\in F$, we denote by
$\Lambda_{n,x}^F$ the set of proper embedding with root at $x$ which take values in $F$, i.e. the set of $\mathcal M\in \Lambda_{n,x}$ such that $\mathcal M(m) \in F$, for all $m\in T_n$. 
Given $\mathcal M\in \Lambda_{n,x}^F$, we further define for any $m\in T_{(n)}$, the frame: 
$$\square_m = S_F(\mathcal M(m),L_0-1),$$
and set 
$$\mathcal X_{\mathcal M}^F = \bigcup_{m\in T_{(n)}} \square_m.$$
Similarly as for~\eqref{BCapSL0}, one can see using~\eqref{asympG} and~\eqref{lowerBCap} that
\begin{equation}\label{BCapSLF}
    \textrm{BCap}(\mathcal X_{\mathcal M}^F) \asymp \left\{
    \begin{array}{ll}
   2^n  L_0  & \textrm{if }d\ge 6\\
    \frac{2^nL_0}{1+\log(L_0)} & \textrm{if }d=5,
\end{array}
\right. 
\end{equation}
with the implicit constants independent of $n\ge 1$, $x\in \mathcal L_n$ and $\mathcal M\in \Lambda_{n,x}^F$. Moreover, for any $A\subseteq \mathbb Z^d$, we let 
$$\mathcal N_{\mathcal M}(A) =\sum_{m\in T_{(n)}} \mathbf 1\{A \cap \square_m\neq \emptyset\}. $$ 
Recall also that given $z\in \mathbb Z^d$, we denote by $\mathcal T^z$ the range of a random walk indexed by $\mathcal T$, starting from $z$, and by $\mathcal T^z_-$ its restriction to the set of vertices in the past. 
\begin{prop}\label{prop.main}
Let $\lambda>0$. There exist  $C>0$, and $L_0\ge 1$, such that for any $n\ge 1$, any $x\in \mathcal L_n\cap F$, and any $\mathcal M\in \Lambda_{n,x}^F$, 
$$\frac 1{\textrm{BCap}(\mathcal X_{\mathcal M}^F)} \sum_{z\in \mathcal X_{\mathcal M}^F} \mathbb E\Big[e^{\lambda \cdot \mathcal N_{\mathcal M}(\mathcal T^z)}\cdot \mathbf 1\{ \mathcal T^z_- \cap \mathcal X_{\mathcal M}^F = \emptyset\}\Big]\le C. $$ 
\end{prop}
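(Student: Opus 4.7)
The plan is to establish the bound by induction on $n$, controlling the quantity
\[
\Psi_n := \sup \frac{1}{\textrm{BCap}(\mathcal{X}_{\mathcal{M}}^F)} \sum_{z\in\mathcal{X}_{\mathcal{M}}^F}\mathbb{E}\bigl[e^{\lambda\mathcal{N}_{\mathcal{M}}(\mathcal{T}^z)}\,\mathbf{1}\{\mathcal{T}^z_-\cap\mathcal{X}_{\mathcal{M}}^F=\emptyset\}\bigr],
\]
where the supremum is over all admissible $(\mathcal{M},x)$. The goal is to show $\Psi_n\le C=C(\lambda,L_0)$ independently of $n$. The base case $n=1$ is immediate from $\mathcal{N}_{\mathcal{M}}\le 2$ together with the estimate $|\mathcal{X}_{\mathcal{M}}^F|/\textrm{BCap}(\mathcal{X}_{\mathcal{M}}^F)\lesssim 1+\log L_0$ from \eqref{BCapSLF}.

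For the inductive step I would split $T_{(n)}$ into its two depth-$(n-1)$ subtrees at the root, yielding $\mathcal{X}=\mathcal{X}_1\sqcup\mathcal{X}_2$ with corresponding embeddings $\mathcal{M}_1,\mathcal{M}_2$ and $\textrm{dist}(\mathcal{X}_1,\mathcal{X}_2)\gtrsim L_{n-1}$ by Lemma~\ref{lem:Lambdadist}. Using the factorisation $e^{\lambda\mathcal{N}_{\mathcal{M}}}=e^{\lambda\mathcal{N}_{\mathcal{M}_1}}e^{\lambda\mathcal{N}_{\mathcal{M}_2}}$, for $z\in\mathcal{X}_1$ (the other case being symmetric) I split according to whether $\mathcal{T}^z$ hits $\mathcal{X}_2$. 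On the non-hit event $\mathcal{N}_{\mathcal{M}_2}=0$ and the contribution reduces to the inductive quantity for $\mathcal{X}_1$. On the hit event I invoke Proposition~\ref{Zhu.hit2} and the remark following it, transposed to the infinite invariant tree by first identifying which adjoint subtree hanging off the spine of $\mathcal{T}$ carries the first visit to $\mathcal{X}_2$: conditioning on the path $\Gamma$ of first visit and its endpoint $y\in\mathcal{X}_2$, the adjoint trees attached to $\Gamma$ on the right are free, those on the left are adjoint trees conditioned to miss $\mathcal{X}_2$, and the walk continues from $y$ as an adjoint-tree-indexed walk.

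The heart of the proof is the decorrelation of these three contributions. For the right trees, applying Corollary~\ref{cor.hit} at each of their root positions $\Gamma(j)+\xi$ bounds their contribution to the exponential moment by $\exp\bigl(O\bigl(\sum_j \textrm{BCap}(\mathcal{X})/\textrm{dist}(\Gamma(j),\mathcal{X})^{d-2}\bigr)\bigr)$, which is $O(1)$ on typical $\Gamma$ by a Green's function computation. For the left trees I exchange the $\mathcal{X}_2$-avoidance conditioning for the unconditional adjoint law at a per-tree cost $1+O(\textrm{BCap}(\mathcal{X}_2)/\textrm{dist}(\Gamma(j),\mathcal{X}_2)^{d-4})$ (again by Corollary~\ref{cor.hit}); the resulting free left adjoint trees then fall under the inductive bound for $\mathcal{X}_1$. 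The continuation of the walk from $y\in\mathcal{X}_2$ reproduces the original quantity for $\mathcal{X}_2$ with starting point $y$, and is absorbed by the induction hypothesis applied to $\mathcal{X}_2$ via the identity $\textrm{BCap}(\mathcal{X}_2)=\sum_{y\in\mathcal{X}_2}e_{\mathcal{X}_2}(y)$ and a short argument replacing $e_{\mathcal{X}_1\cup\mathcal{X}_2}(y)$ by $e_{\mathcal{X}_2}(y)$ at a controlled cost.

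The remaining expectation is a functional of the simple-random-walk path $\Gamma$ conditioned on its endpoint $y$---the ``complicated functional of the full trajectory, conditional on the final position'' identified in the introduction as the principal obstacle. I would decouple it by a second surgery, cutting $\Gamma$ into an initial microscopic leg leaving a neighbourhood of $\mathcal{X}_1$, a macroscopic bulk, and a final microscopic leg entering $\square_{m'}$; a time-reversal of the final leg combined with the Green's-function asymptotics \eqref{asympG} reduces the joint sum over $z,y$ to a convolution controlled by $\textrm{BCap}(\mathcal{X}_1)\textrm{BCap}(\mathcal{X}_2)/L_{n-1}^{d-4}$. This yields a multiplicative correction of order $(2\cdot 6^{-(d-4)})^n$ (times a factor depending on $L_0$) to $\Psi_{n-1}$; since this correction is summable over $n$ for any $d\ge 5$ once $L_0$ is taken large enough, the induction closes. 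The two surgeries---decorrelating the left from the right adjoint trees along $\Gamma$, and decoupling the SRW functional of $\Gamma$ from its endpoint---are the technical crux, corresponding to the step where, for Random Interlacements, the Markov property makes everything immediate.
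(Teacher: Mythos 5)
Your proposal correctly identifies the technical crux — the surgery on the first-visit path $\Gamma$, Proposition~\ref{Zhu.hit2} and the remark after it, and the two decorrelation steps (left vs.\ right trees, and SRW trajectory vs.\ endpoint). However, the outer skeleton (induction on the depth $n$, splitting $T_{(n)}$ into its two subtrees at the root) is not what the paper does and, as stated, has a gap that I do not see how to close.

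The key issue is the step where you bound the right adjoint trees' contribution to the exponential moment by $\exp\bigl(O(\sum_j \textrm{BCap}(\mathcal X)/\textrm{dist}(\Gamma(j),\mathcal X)^{d-2})\bigr)$ ``by Corollary~\ref{cor.hit}.'' Corollary~\ref{cor.hit} gives hitting probabilities, not exponential moments: a single adjoint tree rooted at $\Gamma(j)$ can hit many frames of $\mathcal X^F_{\mathcal M}$, so $\mathbb E[e^{\lambda N_j}]$ is not $1+O(p_j)$ for free. Obtaining precisely this per-tree exponential moment bound is the hard content of the paper's Lemma~\ref{lem:main1}, which is proved by a \emph{different} induction — not on the hierarchy depth $n$, but on the per-tree cap $k$ in the quantity $\mathcal N^\delta_{\mathcal M}(\cdot)\wedge k$ — and which requires its own careful surgery on $\gamma$ across the scales $\mathcal S_i$ together with Lemma~\ref{lem:SRW} and Lemma~\ref{lem:main2}. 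Your argument implicitly assumes this lemma as if it were immediate. A related problem is that after the first hit of $\mathcal X_2$, the right trees (and the continuation from $y$) can re-visit $\mathcal X_1$ as well, so the ``after'' contribution is \emph{not} the inductive quantity for $\mathcal X_2$ alone: you would need a bound in terms of $\Psi_n$ itself, not $\Psi_{n-1}$, and it is then unclear that the recursion closes. Finally, the ``continuation from $y$'' is the offspring structure of the endpoint of a first-visit path (an adjoint/conditioned object), not a fresh infinite-invariant-tree walk weighted by $e_{\mathcal X_2}(y)$, so the identity $\textrm{BCap}(\mathcal X_2)=\sum_y e_{\mathcal X_2}(y)$ does not let you reinsert the induction hypothesis in the way suggested.

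For comparison, the paper's proof of Proposition~\ref{prop.main} first cuts at the stopping time $\sigma_0$ (exit of the spine walk from $\mathcal S_0$) to handle the past-avoidance indicator (dealing with the $1/\log L_0$ issue in $d=5$), then bounds $\mathcal N_{\mathcal M}(\mathcal T_+^y) \le \mathcal N^{\delta,*}_{\mathcal M}(\Gamma^y) + \sum_k \mathcal N^\delta_{\mathcal M}(\Gamma(k))$ and treats the two terms separately: the first by a geometric domination as in~\cite{R}, the second by combining Lemma~\ref{lem:main1} with Lemma~\ref{lem:main2}. The induction-on-$k$ in Lemma~\ref{lem:main1} is precisely what replaces the Markov property available for SRW, and it is that induction, not an induction on the hierarchy depth, that makes the argument close.
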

Assuming this proposition, one can now give the proof of Proposition~\ref{prop.Bnu}. 
\begin{proof}[Proof of Proposition~\ref{prop.Bnu}]
Using Lemma~\ref{lem:count} and~\ref{lem:*path}, we get that for some constant $C>0$, 
\begin{align}\label{NMIu1}
\nonumber \mathbb P(B_{n,x}^u)&  \le \mathcal C_d^{2^n} \cdot \max_{\mathcal M\in \Lambda_{n,x}^F}\mathbb P\big(\mathcal I^u \cap S_F(\mathcal M(m),L_0-1)\neq \emptyset, \textrm{ for all }m\in T_{(n)}\big) \\  
&  = \mathcal C_d^{2^n} \cdot \max_{\mathcal M\in \Lambda_{n,x}^F}\mathbb P\big(\mathcal N_{\mathcal M}(\mathcal I^u) \ge 2^n\big). 
\end{align}
We use next the representation of $\mathcal I^u$ given by~\eqref{rep.IuK}. Fix any $\mathcal M\in \Lambda_{n,x}^F$.  
Let $N^u$ be a Poisson random variable with mean $u\cdot \textrm{BCap}(\mathcal X_{\mathcal M}^F)$, and consider a sequence of independent random walks $(S^{(i)})_{i\ge 1}$ indexed by $\mathcal T$ starting from randomly and independently chosen points of $\mathcal X_{\mathcal M}^F$ distributed according to $e_{\mathcal X_{\mathcal M}^F}(\cdot)/\textrm{BCap}(\mathcal X_{\mathcal M}^F)$, and conditioned on not hitting $\mathcal X_{\mathcal M}^F$ in the past. Denote by $(\mathcal T^{(i)})_{i\ge 1}$, their respective ranges. Then~\eqref{rep.IuK} yields that for any $\lambda>0$, using also Campbell's formula for the last line, 
\begin{align}\label{NMIu2}
\nonumber    \mathbb P\big(\mathcal N_{\mathcal M}(\mathcal I^u) \ge 2^n\big) & \le  \mathbb P\Big(\sum_{i=1}^{N^u} \mathcal N_{\mathcal M}(\mathcal T^{(i)}) \ge 2^n\Big)
     \le \exp(-\lambda \cdot 2^n) \cdot \mathbb E\Big[\exp\Big(\lambda \sum_{i=1}^{N^u} \mathcal N_{\mathcal M}(\mathcal T^{(i)})\Big)\Big]\\
    & = \exp(-\lambda \cdot 2^n)\cdot \exp\Big(u \cdot \textrm{BCap}(\mathcal X_{\mathcal M}^F) \cdot (\mathbb E[e^{\lambda \cdot \mathcal N_{\mathcal M}(\mathcal T^{(1)})}] - 1)\Big).
\end{align}
Now we use that by~\eqref{BCapSL0} and subadditivity of the branching capacity one has  $\textrm{BCap}(\mathcal X_{\mathcal M}^F) \le  C\cdot 2^n L_0$, for some constant $C>0$ (independent of $\mathcal M$). It then suffices to take $\lambda = 2 \log \mathcal C_d$, and choose $L_0$ large enough so that by Proposition~\ref{prop.main}, the exponential moment of $\mathcal N_{\mathcal M}(\mathcal T^{(1)})$ appearing above is finite. Finally, combining~\eqref{NMIu1} and~\eqref{NMIu2} shows that for $u$ small enough the probability of $B_{n,x}^u$ is exponentially small, concluding the proof of the proposition. 
\end{proof}
It amounts to prove Proposition~\ref{prop.main} now. For this, we need to introduce some more notation. Let $n\ge 1$, $x\in \mathcal L_n\cap F$ and $\mathcal M\in \Lambda_{n,x}^F$ be given. For $z\in \mathbb Z^d$ and $r>0$, we let $Q(z,r) = \{y \in \mathbb Z^d : |y-z|\le r\}$, where we recall that $|\cdot|$ denotes the $\ell_\infty$-norm. We then define inductively a sequence $(\mathcal S_i)_{i\ge 0}$ of subsets of $\mathbb Z^d$ as follows. First 
$$\mathcal S_0 = \bigcup_{m\in T_{(n)}} Q(\mathcal M(m),L_1).$$
Then for $i\in \{1,\dots,n\}$, we let 
$$\mathcal S_i = \Big(\bigcup_{m \in T_{(n-i)}} Q(\mathcal M(m),L_{i+1})\Big)\setminus \mathcal S_{i-1},$$
and for $i>n$, we just consider 
$$\mathcal S_i = Q(x,L_{i+1}) \setminus Q(x,L_i).$$
Note in particular that the sequence $(\mathcal S_i)_{i\ge 0}$ forms a partition of $\mathbb Z^d$. Finally, we consider the random variable defined for $\delta\in (0,1)$ and $z\in \mathbb Z^d$, by  
$$\mathcal N_{\mathcal M}^\delta(z) = \sum_{\substack{m\in T_{(n)} : \\ d(z,\square_m)\ge \delta L_0}} \mathbf 1\{\widetilde{\mathcal T}^z_c \cap \square_m\neq \emptyset\},$$
where we recall that $\widetilde{\mathcal T}^z_c$ denotes the range of a random walk indexed by an adjoint tree starting from $z$, and for $A\subset \mathbb Z^d$, $d(z,A)=\inf\{|z-a| : a\in A\}$ denotes the $\ell_\infty$-distance from $z$ to $A$. In particular if the distance from $z$ to $\mathcal X_{\mathcal M}^F$ is larger than $\delta L_0$, then $\mathcal N_{\mathcal M}^\delta(z)$ coincides with $\mathcal N_{\mathcal M}(\widetilde{\mathcal T}^z_c)$, and otherwise differs from it by at most one unit. The proof of Proposition~\ref{prop.main} relies on the next two lemmas. 
\begin{lem}\label{lem:main1}
Let $\lambda>0$ and $\delta\in (0,1)$ be given. There exists $R>0$, such that for any $n\ge 1$, any $x\in \mathcal L_n$, any $\mathcal M\in \Lambda_{n,x}^F$, any $L_0\ge R$, and any $i\ge 0$, 
$$\sup_{z\in \mathcal S_i} \mathbb E\Big[e^{\lambda \cdot \mathcal N_{\mathcal M}^\delta(z)}\Big] \le 1+ \frac{1}{L_i^2 \cdot 3^i\cdot \sqrt{\log L_0}}.$$
Furthermore, the same result holds if in the definition of $\mathcal N_{\mathcal M}^\delta(\cdot)$ we replace $\widetilde {\mathcal T}_c$ by $\mathcal T_c$. 
\end{lem}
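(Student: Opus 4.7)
I would first bound the expectation of $\mathcal{N}_{\mathcal{M}}^\delta(z)$. By linearity of expectation and Corollary~\ref{cor.hit},
\begin{equation*}
\mathbb{E}\bigl[\mathcal{N}_{\mathcal{M}}^\delta(z)\bigr] \le C \sum_{\substack{m \in T_{(n)} \\ d(z,\square_m) \ge \delta L_0}} \frac{\rm{BCap}(\square_m)}{d(z,\square_m)^{d-2}}.
\end{equation*}
To evaluate this sum I would group the leaves $m \in T_{(n)}$ according to the level $k$ of their least common ancestor in $T_n$ with the leaf nearest to $z$: by Lemma~\ref{lem:Lambdadist} and the spacing property of $\Lambda_{n,x}^F$, the group at level $k$ consists of $2^{n-k-1}$ leaves lying at distance $\asymp L_{n-k}$ from $z$, and the definition of $\mathcal{S}_i$ forces $n - k \ge i$ for any group contributing to the sum. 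Setting $j = n-k$ and summing the geometric series (which converges for $d \ge 5$, since $2 \cdot 6^{-(d-2)} < 1$) gives $\sum_m d(z,\square_m)^{-(d-2)} \lesssim 2^i / L_i^{d-2}$. Combined with $\rm{BCap}(\square_m) \asymp L_0$ in $d \ge 6$ and $\asymp L_0/\log L_0$ in $d = 5$ from~\eqref{BCapSL0}, and $L_i = L_0 \cdot 6^i$, a direct substitution yields $\mathbb{E}[\mathcal{N}_{\mathcal{M}}^\delta(z)] \le C (L_i^2 \cdot 3^i \cdot \log L_0)^{-1}$ in dimension $5$, and a strictly smaller bound in $d \ge 6$; in both cases this is at most $\tfrac 12 (L_i^2 \cdot 3^i \cdot \sqrt{\log L_0})^{-1}$ once $L_0 \ge R$ is large enough.

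\textbf{Upgrading to the exponential moment.} To pass from this first-moment bound to the exponential moment with the \emph{same} order of magnitude -- rather than its square root, which is all that a direct Cauchy--Schwarz would give -- I would establish the factorial moment estimate
\begin{equation*}
\mathbb{E}\bigl[\mathcal{N}_{\mathcal{M}}^\delta(z)^k\bigr] \le k! \, C_0^k \, \mathbb{E}\bigl[\mathcal{N}_{\mathcal{M}}^\delta(z)\bigr], \qquad k \ge 1,
\end{equation*}
for some absolute constant $C_0$ independent of $L_0, n, i, z, \mathcal{M}$. Summing the Taylor series of $e^{\lambda \cdot}$ then gives $\mathbb{E}[e^{\lambda \mathcal{N}_{\mathcal{M}}^\delta(z)}] - 1 \le \frac{\lambda C_0}{1 - \lambda C_0}\, \mathbb{E}[\mathcal{N}_{\mathcal{M}}^\delta(z)]$ as soon as $\lambda C_0 < 1$, which combined with the previous step closes the estimate. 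To prove the factorial moment bound I would expand $\mathcal{N}_{\mathcal{M}}^\delta(z)^k$ as a sum over ordered $k$-tuples of hit indicators, reorder each tuple so that the leading box is the box hit first in the depth-first-search order, and condition on the corresponding first-visit path $\Gamma_1$. By Proposition~\ref{Zhu.hit2} together with the remark following it, the remainder of the tree conditional on $\Gamma_1$ consists of the critical sub-tree $\mathcal{T}_c$ rooted at the first-hit vertex, together with adjoint trees attached along $\Gamma_1$ (the ones on the left being additionally conditioned on avoiding the first hit box). The expected number of further boxes hit by each of these pieces is controlled by reapplying the estimate of Paragraph 1 at its new starting point -- which, thanks to the $\delta L_0$-exclusion in the definition of $\mathcal{N}_{\mathcal{M}}^\delta$ removing the close-to-root divergence, stays below a universal constant. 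Iterating $k-1$ times and accounting for the $k!$ orderings of the hit sequence produces the claimed factorial bound.

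\textbf{Main obstacle and extension to $\mathcal{T}_c$.} The delicate technical point lies in showing that the \emph{total} expected contribution from the (random) collection of adjoint trees hung along the first-visit path $\Gamma_1$ is bounded by a single constant, even when $\Gamma_1$ is long. For this one integrates the per-vertex hit expectation $\sim \rm{BCap}(\square_m)/d(y,\square_m)^{d-2}$ against the expected local time $\sim g(y - z)$ of a simple random walk: the resulting $g * g$-type convolution is, by the identity $G \sim \tfrac{\sigma^2}{2} g*g$ in~\eqref{g*g} and the asymptotics~\eqref{asympG}, finite in $d \ge 5$ and uniformly bounded once the $\delta L_0$-exclusion removes the singular self-contribution. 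The reweighting of $\Gamma_1$ induced by its conditioning to reach $\square_{m_1}$ is handled by noting that it only concentrates the local time near $\square_{m_1}$, which is not detrimental. The conditioning of the left-hand adjoint trees on avoiding $\square_{m_1}$ can only \emph{decrease} hitting probabilities of the other (disjoint) boxes, and so is absorbed into the constant. Finally, the extension to $\mathcal{T}_c$ in place of $\widetilde{\mathcal{T}}_c$ is immediate: Corollary~\ref{cor.hit} and Proposition~\ref{Zhu.hit2} apply uniformly to both tree types with the same exponent $d-2$, so the same argument carries through verbatim.
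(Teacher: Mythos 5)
Your overall architecture is genuinely different from the paper's: you propose a factorial-moment bound $\mathbb{E}[\mathcal N_{\mathcal M}^\delta(z)^k]\le k!\,C_0^{k}\,\mathbb{E}[\mathcal N_{\mathcal M}^\delta(z)]$ and sum the Taylor series, whereas the paper inducts directly on $\mathbb{E}[e^{\lambda\,\mathcal N_{\mathcal M}^\delta(z)\wedge k}]$ with $k$ increasing. The key building block is the same in both cases — condition on the path of first visit via Proposition~\ref{Zhu.hit2} and control the contributions of the adjoint trees hung along it — so the first-moment computation and the $g*g$ heuristic you give are entirely parallel to the base case and structure of the paper's induction. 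Two small corrections to your bookkeeping before the main point: as stated, $\mathbb{E}[\mathcal N^k]\le k!\,C_0^k\,\mathbb{E}[\mathcal N]$ with $k=1$ forces $C_0\ge 1$, so your later requirement $\lambda C_0<1$ would fail for the given $\lambda$; you almost certainly mean $C_0^{k-1}$, and you should say explicitly that $C_0\to 0$ as $L_0\to\infty$ (your computation actually shows $C_0\lesssim 1/\sqrt{\log L_0}$ in $d=5$ once one feeds in~\eqref{BCapSL0}) rather than calling it an ``absolute'' constant ``independent of $L_0$'' — otherwise $\lambda C_0<1$ is not available.

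The real gap is the treatment of the reweighting of the first-visit path. Writing ``the reweighting of $\Gamma_1$ induced by its conditioning to reach $\square_{m_1}$ is handled by noting that it only concentrates the local time near $\square_{m_1}$, which is not detrimental'' glosses over precisely the part that takes the paper most of the proof. The conditioned (Doob-transformed) walk has expected local time at $y$ roughly $g(z,y)\,g(y,w)/g(z,w)$, which is enhanced on the entire region closer to $w\in\square_{m_1}$ than $z$ is — a macroscopic set — and one needs a quantitative control showing that when this enhanced local time is weighted by the per-vertex hitting probabilities and summed across all scales $\mathcal S_j$, the result remains of the same order as the unconditioned sum and produces the required exponential integrability (not just a bounded mean). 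This is exactly what the paper's surgery argument accomplishes, cutting the path $\gamma:w\to z$ at the interfaces of the annuli $\mathcal S_{i\pm 1}$, controlling the middle portion with Lemma~\ref{lem:SRW} (an exponential moment bound conditioned on hitting a point), and handling the outer portion with Lemma~\ref{lem:main2} after concatenating with an escaping ray. Your Paragraph~3 does not supply an argument at this level; and the auxiliary claim that conditioning the left-hand adjoint trees to avoid $\square_{m_1}$ ``can only decrease hitting probabilities of the other boxes'' is not a valid monotonicity statement — the correct and usable fact is only $\mathbb{P}(A\mid B)\le \mathbb{P}(A)/\mathbb{P}(B)$, which degrades by the constant $\mathbb{P}(\text{avoid }\mathcal X_{\mathcal M}^F)^{-1}$; the paper controls this via~\eqref{pastfuture}-type estimates rather than monotonicity. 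The extension to $\mathcal T_c$ at the end of your sketch is fine, though the paper actually goes the other way, reducing the adjoint-tree statement to the $\mathcal T_c$ statement by conditioning on the number of offspring of the root.
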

Given a simple random walk $(S_k)_{k\ge 0}$ on $\mathbb Z^d$, for $i\ge 0$, denote by $\tau_i$ its total time spent in $\mathcal S_i$: 
$$\tau_i = \sum_{k\ge 0} \mathbf 1\{S_k \in \mathcal S_i\}. $$ 
Our second lemma is the following (recall that we assume $d\ge 5$ in the whole section, but the next lemma actually holds in any dimension $d\ge 3$). 
\begin{lem}\label{lem:main2}
There exists $\varepsilon>0$, such that for any $L_0\ge 1$, 
$$\sup_{z\in \mathbb Z^d} \mathbb E_z\Big[\exp\big(\varepsilon \sum_{i\ge 0} \frac{\tau_i}{L_i^2\cdot 3^i}\big)\Big] <\infty. $$ 
\end{lem}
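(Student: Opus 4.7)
The plan is to invoke Khas'minskii's lemma in its discrete-time form: if $V\colon \mathbb Z^d \to [0,\infty)$ satisfies $\alpha := \sup_z \mathbb E_z[\sum_{k\ge 0} V(S_k)] < 1$, then $\sup_z \mathbb E_z[\exp(\sum_k V(S_k))] \le (1-\alpha)^{-1}$. Setting
\[
V(y) := \varepsilon \sum_{i\ge 0} \frac{\mathbf 1\{y\in \mathcal S_i\}}{L_i^2\cdot 3^i},
\]
we have $\sum_k V(S_k) = \varepsilon \sum_i \tau_i/(L_i^2\cdot 3^i)$, so the entire statement reduces to the single uniform bound
\[
\sup_z \sum_{i\ge 0} \frac{\mathbb E_z[\tau_i]}{L_i^2\cdot 3^i} \le C,
\]
with $C$ independent of $z$, $L_0$, $n$, $x$ and $\mathcal M$; Khas'minskii then closes the case for any $\varepsilon<1/C$.

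This itself reduces to $\sup_z \mathbb E_z[\tau_i] \le CL_i^2$, uniformly in $i$ and in all geometric data, since then $\sum_i (L_i^2\cdot 3^i)^{-1}\mathbb E_z[\tau_i] \le C\sum_i 3^{-i} < \infty$. Applying the strong Markov property at the first hitting time of $\mathcal S_i$ further reduces this to bounding $\max_{y\in \mathcal S_i} \mathbb E_y[\tau_i]$. When $i>n$, we have $\mathcal S_i \subset Q(x,L_{i+1})$, and the elementary fact that the expected time spent by a simple random walk in a ball of radius $R$ is at most $CR^2$ (valid for $d\ge 3$) yields $\mathbb E_y[\tau_i] \lesssim L_{i+1}^2 \lesssim L_i^2$. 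When $i\le n$, we decompose $\mathcal S_i$ into the boxes $Q(\mathcal M(m),L_{i+1})$ indexed by $m\in T_{(n-i)}$, and use the first-hit/Green's-function bound
\[
\mathbb E_y\big[\text{time in } Q(\mathcal M(m),L_{i+1})\big] \le CL_{i+1}^2 \cdot \min\!\Big\{1,\; \big(L_{i+1}/|y-\mathcal M(m)|\big)^{d-2}\Big\}.
\]
The box $m_y$ containing $y$ contributes $O(L_{i+1}^2)$. For any other $m$ whose least common ancestor with $m_y$ in $T_{(n-i)}$ lies at depth $k$, telescoping along the $L_j=L_0\cdot 6^j$ scales (in the spirit of the proof of Lemma~\ref{lem:Lambdadist}) yields $|\mathcal M(m)-\mathcal M(m_y)|\gtrsim L_{n-k}$, hence $|y-\mathcal M(m)|\gtrsim L_{n-k}$, while the number of such $m$ is only $2^{n-i-k-1}$. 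Summing over $k$ produces a geometric series with ratio $2/6^{d-2}$, which is convergent as soon as $d\ge 3$; multiplying by $L_{i+1}^2\lesssim L_i^2$ closes the claim.

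The main obstacle is exactly this last geometric-series computation: it is the single point where one must exploit both the binary branching of the embedding (giving only $2^{n-i-k-1}$ competitors at LCA depth $k$) and the polynomial decay of Green's function, and the hypothesis $d\ge 3$ enters through the strict inequality $6^{d-2}>2$, which is what makes the geometric sum summable. Everything else -- the strong Markov reduction, the single-box Green's function estimate and Khas'minskii's lemma itself -- is standard and uniform in $L_0$.
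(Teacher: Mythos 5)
Your proof is correct, but it follows a genuinely different route from the paper's. The paper first applies a Cauchy--Schwarz/Jensen decoupling,
\[
\mathbb E_z\Big[\exp\big(\varepsilon \sum_{i=0}^I \tfrac{\tau_i}{L_i^2 3^i}\big)\Big] \le \prod_{i=0}^I \mathbb E_z\Big[\exp\big(\varepsilon\tfrac{\tau_i}{L_i^2}\big)\Big]^{1/3^i},
\]
thereby reducing the problem to the single-index bound $\sup_z\sup_i \mathbb E_z[\exp(\varepsilon\tau_i/L_i^2)]<\infty$, which it then proves via an excursion decomposition: splitting the time spent in $\cup_{j\le i}\mathcal S_j$ into excursions up to $\partial\mathcal S_{i+K}$, each of which contributes a bounded exponential moment, and whose count is stochastically geometric. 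You instead invoke Khas'minskii's lemma, which converts the exponential-moment problem directly into the $L^1$ estimate $\sup_z\sum_i \mathbb E_z[\tau_i]/(L_i^2 3^i)<1/\varepsilon$, and then establish the single clean bound $\sup_z\mathbb E_z[\tau_i]\lesssim L_i^2$ via the Green's-function/first-hit estimate summed along the hierarchy of least common ancestors in $T_{(n-i)}$, where the geometric ratio $2/6^{d-2}<1$ (valid for $d\ge 3$) closes the sum. Your argument is arguably more economical: it replaces the decoupling and the exponential-moment-per-excursion step by the off-the-shelf Khas'minskii machinery, and the hierarchical Green's-function count makes the $d\ge 3$ threshold (which the paper asserts in passing) fully transparent. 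The only minor caveat is that your distance lower bound $|\mathcal M(m)-\mathcal M(m')|\gtrsim L_{n-j}$ for elements of $T_{(n-i)}$ with LCA at depth $j$ is an extension of Lemma~\ref{lem:Lambdadist} from leaves to interior levels; as you note it follows by the same telescoping over the scales $L_k=L_0\cdot 6^k$, but it would be worth stating explicitly since the paper's lemma is only formulated for $T_{(n)}$.
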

Assuming these two lemma, one can now give the proof of our main proposition. 
\begin{proof}[Proof of Proposition~\ref{prop.main}]
Let $n\ge 1$, $x\in \mathcal L_n$, and $\mathcal M\in \Lambda_{n,x}^F$ be given.  
Recall that we consider here a tree-indexed random walk starting from some $z\in \mathcal X_{\mathcal M}^F$, and one has to bound some exponential moment of the number of frames entering the definition of $\mathcal X_{\mathcal M}^F$, which are visited by this walk, on the event that it avoids $\mathcal X_{\mathcal M}^F$ in the past. A first observation is that in dimension $d\ge 6$, one could just ignore this last event, since (at least on average) its probability is of order one. However, the case of dimension five requires more care, since in this case its probability is (on average) of order $1/\log (L_0)$, as shown by~\eqref{BCapSLF}. We address this issue by decomposing the range of the walk in two parts, corresponding to its trajectory that happens respectively before and after some time $\sigma_0$ defined below. To be more precise, denote by $\Gamma=(\Gamma(k))_{k\ge 0}$ the trajectory of the walk on the spine of $\mathcal T$ (which by definition has the law of a simple random walk starting from $z$), and let 
$$\sigma_0 = \inf\{k: \Gamma(k) \notin \mathcal S_0\}.$$
Let $\mathcal T^z_-(\sigma_0)$ and $\mathcal T^z_+(\sigma_0)$ be the ranges of the walk on the set of trees in the past, respectively the future, attached to the vertices of the spine $\Gamma(k)$, with $k\le \sigma_0$. Ignoring the event that after time $\sigma_0$, the walk in the past has to avoid $\mathcal X_{\mathcal M}^F$, and using that the range after time $\sigma_0$ is independent of its range before, conditionally on its position at that time, we can bound for any $z\in \mathcal X_{\mathcal M}^F$, 
\begin{equation}\label{eq:Nmz}
\mathbb E\Big[e^{\lambda \cdot \mathcal N_{\mathcal M}(\mathcal T^z)}\cdot \mathbf 1\{ \mathcal T^z_- \cap \mathcal X_{\mathcal M}^F = \emptyset\}\Big] \le \mathbb E\Big[e^{\lambda \cdot \mathcal N_{\mathcal M}(\mathcal T^z_+(\sigma_0))}\cdot \mathbf 1\{ \mathcal T^z_-(\sigma_0) \cap \mathcal X_{\mathcal M}^F = \emptyset\}\Big]\cdot \sup_{y\in \mathbb Z^d} \mathbb E\Big[e^{\lambda \cdot \mathcal N_{\mathcal M}(\mathcal T^y_+)}\Big].
\end{equation}
Now, observe that by Corollary~\ref{cor.hit} and~\eqref{BCapSL0}, one can ensure by taking $L_0$ large enough, that 
$$\sup_{y\notin \mathcal S_0} \mathbb P(\mathcal T^y_-\cap \mathcal X_{\mathcal M}^F\neq \emptyset) \lesssim \sum_{i\ge 0} \frac{2^i\cdot  \textrm{BCap}(S_F(0,L_0-1))}{L_i^{d-2}} \le \frac 12,$$
and as a consequence, that
$$\mathbb P(\mathcal T^z_-(\sigma_0) \cap \mathcal X_{\mathcal M}^F = \emptyset) \le 2 \cdot e_{\mathcal X_{\mathcal M}^F}(z). $$ 
Moreover, letting $N_0$ be the total number of offspring in the future of the vertices on the spine up to time $\sigma_0$, one has for some constant $c>0$, 
$$\mathbb P(N_0\ge L_0^2 \cdot \sqrt{\log L_0})
\le e^{-c\sqrt{\log L_0}},$$
since $N_0$ is a sum of $\sigma_0$ independent random variable with law $\widetilde \mu$, and both $\widetilde \mu$ by hypothesis, and (as is well known) $\sigma_0/L_0^2$ have a finite exponential moment. 
Using these last two facts together with Lemma~\ref{lem:main1} (with $i=0$), and recalling that the trees attached to the spine are adjoint trees (except the one attached to the root, which is a usual $\mu$-Bienaym\'e-Galton-Watson tree), we get that for $L_0$ large enough, 
\begin{align}\label{pastfuture}
\nonumber \mathbb E\Big[e^{\lambda \cdot \mathcal N_{\mathcal M}(\mathcal T^z_+(\sigma_0))}& \cdot \mathbf 1\{ \mathcal T^z_-(\sigma_0) \cap \mathcal X_{\mathcal M}^F = \emptyset\}\Big]
 \le e^\lambda\cdot  \mathbb E\Big[\exp\big(\frac{N_0}{L_0^2\cdot  \sqrt{\log L_0}}\big)\cdot \mathbf 1\{ \mathcal T^z_-(\sigma_0) \cap \mathcal X_{\mathcal M}^F = \emptyset\}\Big]\\
\nonumber & \le 2e^{1+\lambda}\cdot e_{\mathcal X_{\mathcal M}^F}(z) + e^{\lambda} \cdot \mathbb E\Big[\exp\big(\frac{N_0}{L_0^2\cdot  \sqrt{\log L_0}}\big)\cdot \mathbf 1\{ N_0 \ge L_0^2\cdot \sqrt{\log L_0}\}\Big] \\
& \le 2 e^{1+\lambda}\cdot e_{\mathcal X_{\mathcal M}^F}(z) +  e^{-c\cdot \sqrt{\log L_0}},
\end{align}
for some possibly smaller constant $c>0$. Note that by summing these inequalities over $z\in \mathcal X_{\mathcal M}^F$, and dividing by $\textrm{BCap}(\mathcal X_{\mathcal M}^F)$, we get a bounded expression, thanks to~\eqref{BCapSLF}.

Hence it just remains to see that the last term in~\eqref{eq:Nmz} is finite. Fix $y\in \mathbb Z^d$, and denote by $\Gamma^y$ the range of the walk on the spine of $\mathcal T^y$. 
For $A\subset \mathbb Z^d$, and $\delta>0$, let 
$$\mathcal N_{\mathcal M}^{\delta,*}(A) = \sum_{m\in T_{(n)}} \mathbf 1\big\{d(A,\square_m)\le \delta L_0\big\}. $$

One has by definition, for any $\delta\in (0,1)$, 
\begin{equation}\label{ineq:NMdelta}
\mathcal N_{\mathcal M}(\mathcal T^y_+) \le \mathcal N_{\mathcal M}^{\delta,*}(\Gamma^y)+ \sum_{k\ge 0} \mathcal N_{\mathcal M}^\delta(\Gamma(k)),
\end{equation}
where with a slight abuse of notation we assume that, conditionally on $\Gamma$, the different terms of the sum above are independent random variables. Now, we first handle the term $\mathcal N_{\mathcal M}^{\delta,*}(\Gamma^y)$, which counts the number of frames composing  $\mathcal X_{\mathcal M}^F$ to which the random walk $\Gamma$ enters the $(\delta L_0)$-neighborhood. As in~\cite{R}, we will dominate the law of this random variable by a Geometric distribution with sufficiently small expectation by choosing $\delta$ small enough. First note that the $(\delta L_0)$-neighborhood of any frame $\square_m$ can be covered by order $1/\delta$ balls of radius $\delta L_0$, and as such has a Newtonian capacity bounded by a constant times $\delta^{d-3} L_0^{d-2}$. It follows that starting from any point on the boundary of one of these neighborhoods, the probability that the random walk ever hits another one is bounded by 
$$C \sum_{i\ge 0} \frac{2^i \cdot \delta^{d-3} L_0^{d-2}}{L_i^{d-2}},$$
for some constant $C>0$, since for any $i\ge 0$ and any given $m\in T_{(n)}$, the number of vertices $m'\in T_{(n)}$, such that $\mathcal M(m')$ is at distance smaller than $L_i$ from $\mathcal M(m)$ is of order $2^i$ by construction. The latter sum can be bounded by $e^{-4\lambda}$, by choosing $\delta$ small enough (independently of the value of $L_0$). 
For such $\delta$, the random variable $\mathcal N_{\mathcal M}^{\delta,*}(\Gamma^y)$ is stochastically dominated by a Geometric random variable with mean $(1-e^{-4\lambda})^{-1}$, and hence we get 
\begin{equation}\label{NMdelta*}
\sup_{y\in \mathbb Z^d} \mathbb E\big[e^{2\lambda \cdot \mathcal N_{\mathcal M}^{\delta,*}(\Gamma^y)}\big]<\infty.
\end{equation}
We next handle the other terms in~\eqref{ineq:NMdelta}, with $\delta$ now fixed as above. Conditioning first on $\Gamma$ and  applying Lemma~\ref{lem:main1}, together with   Lemma~\ref{lem:main2} afterwards, shows that for $L_0$ large enough, one has 
$$\sup_{y\in \mathbb Z^d} \mathbb E\big[e^{2\lambda \cdot \sum_{k\ge 0} \mathcal N_{\mathcal M}^\delta(\Gamma(k))}\big] <\infty. $$ 
Combining the last two displays and~\eqref{ineq:NMdelta}, shows using Cauchy-Schwarz inequality that 
the last term in~\eqref{eq:Nmz} is bounded, as wanted. Altogether this concludes the proof of Proposition~\ref{prop.main}. 
\end{proof}

We now need to prove the two remaining Lemma~\ref{lem:main1} and~\ref{lem:main2}. First we state and prove an intermediate result, that will be needed for the proof of Lemma~\ref{lem:main1}. 

\begin{lem} \label{lem:SRW} Assume $d\ge 3$. 
For $z\in \mathbb Z^d$, let $H_z$ be the hitting time of a point $z$ by a simple random walk, and for $R>0$, let $H_R$ be the hitting time of $\partial B(0,R)$. Fix $K\ge 2$. There exist positive constants $\varepsilon$ and $C$, such that for any 
$R\ge 1$, and any $z\in \partial B(0,R)$, 
$$\mathbb E\big[\exp\big(\frac{\varepsilon \cdot \tau_z}{R^2}\big)\cdot \mathbf 1\{H_z < H_{KR}\} \big]\le C \cdot R^{2-d}. $$ 
\end{lem}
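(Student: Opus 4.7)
I read $\tau_z$ as the hitting time $H_z$: the $R^2$ normalization and the role of the event $\{H_z<H_{KR}\}$ make no other reading plausible. The approach is to apply the standard tail-integration identity
\[
\mathbb{E}_0\bigl[e^{\varepsilon H_z/R^2}\cdot\mathbf 1\{H_z<H_{KR}\}\bigr] = \mathbb{P}_0(H_z<H_{KR}) + \varepsilon\int_0^\infty e^{\varepsilon t}\,\mathbb{P}_0(H_z>tR^2,\, H_z<H_{KR})\,dt,
\]
reducing the lemma to the uniform tail estimate
\[
\mathbb{P}_0(H_z>tR^2,\, H_z<H_{KR}) \le CR^{2-d}e^{-ct},\qquad t\ge 0. \qquad (\star)
\]
Once $(\star)$ is secured, any $\varepsilon<c$ makes the $t$-integral converge, leaving a bound of the form $CR^{2-d}$.

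\textbf{Key tail estimate.} Write $g_{KR}$ for the Green's function of SRW killed on exiting $B(0,KR)$. Since $d\ge 3$, one has $g_{KR}(z,z)\asymp 1$ uniformly in $R$, and the standard asymptotic $g_{KR}(0,z)\asymp\|z\|^{2-d}$ (valid for $K\ge 2$ since $z\in\partial B(0,R)$ sits in the bulk of $B(0,KR)$) gives $\mathbb{P}_0(H_z<H_{KR}) = g_{KR}(0,z)/g_{KR}(z,z) \asymp R^{2-d}$, proving $(\star)$ trivially for $t\le 1$. For $t\ge 1$, apply the Markov property at time $T=tR^2$: the event $\{H_z>T,\,H_z<H_{KR}\}$ forces $H_{KR}>T$, and denoting by $p_T^{KR}$ the killed transition kernel,
\[
\mathbb{P}_0(H_z>T,\,H_z<H_{KR}) \le \sum_{y} p_T^{KR}(0,y)\,\mathbb{P}_y(H_z<H_{KR}).
\]
Invoke the uniform Dirichlet heat-kernel bound $p_T^{KR}(0,y)\le CR^{-d}e^{-cT/R^2}$ valid for $T\ge R^2$, stemming from the spectral gap $\lambda_1\asymp R^{-2}$ of the killed generator on $B(0,KR)$ together with the $L^2$-normalized principal eigenfunction being of order $R^{-d/2}$. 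In parallel, reversibility of $g_{KR}$ yields
\[
\sum_y \mathbb{P}_y(H_z<H_{KR}) = g_{KR}(z,z)^{-1}\sum_y g_{KR}(z,y) = g_{KR}(z,z)^{-1}\,\mathbb{E}_z[H_{KR}] \asymp R^2,
\]
since $\mathbb{E}_z[H_{KR}]\asymp R^2$ by the standard expected-exit-time estimate for SRW in a ball. Combining, $\mathbb{P}_0(H_z>tR^2,\,H_z<H_{KR})\lesssim R^{-d}e^{-ct}\cdot R^2 = R^{2-d}e^{-ct}$, which is $(\star)$. Plugging back into the tail identity and choosing $\varepsilon<c$ concludes.

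\textbf{Main obstacle.} The principal technical ingredient is the uniform Dirichlet heat-kernel estimate $p_T^{KR}(0,y)\lesssim R^{-d}e^{-cT/R^2}$ for $T\gtrsim R^2$, which packages the mixing and spectral decay of the killed walk in the ball. This is classical (see e.g.~\cite{LL}), but if one wishes to avoid invoking the spectral decomposition, the same conclusion can be extracted by iterating the elementary inequality $\sup_x\mathbb{P}_x(H_{KR}\ge R^2)\le 1-\delta$ to get exponential decay of $\mathbb{P}_0(H_{KR}>tR^2)$, and then combining it with a short-time mixing argument to spread out to density $R^{-d}$ before applying the potential-theoretic input $\sum_y \mathbb{P}_y(H_z<H_{KR})\asymp R^2$. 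Either route yields $(\star)$ with the same $R^{2-d}e^{-ct}$ form, and the remainder of the proof is routine integration.
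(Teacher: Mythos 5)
Your reading of $\tau_z$ as the hitting time $H_z$ matches the paper's, and the argument is correct, but it takes a genuinely different route from the paper's proof. The paper does not go through the tail estimate $(\star)$: it decomposes directly over blocks $H_z\in[kR^2,(k+1)R^2)$, applies the Markov property once at time $\lfloor t/2\rfloor$, and multiplies the confinement bound $\mathbb P(\max_{m\le\lfloor t/2\rfloor}\|S_m\|\le KR)\le e^{-ct/R^2}$ by the on-diagonal LCLT bound $\sup_y\mathbb P_y(S_{\lceil t/2\rceil}=z)\le Ct^{-d/2}$, the $k=0$ block being absorbed into $\sum_t\mathbb P_0(S_t=z)\le g(z)\lesssim R^{2-d}$. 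You instead isolate $(\star)$ and prove it by combining the Dirichlet heat-kernel decay $p_T^{KR}(0,\cdot)\lesssim R^{-d}e^{-cT/R^2}$ with the potential-theoretic identity $\sum_y\mathbb P_y(H_z<H_{KR})=\mathbb E_z[H_{KR}]/g_{KR}(z,z)\asymp R^2$. Both arguments are sound; yours is somewhat more structured and makes the exponential tail of $H_z$ explicit, at the cost of importing the Dirichlet heat-kernel bound (which, as you note, reduces to one LCLT step plus the exit-time decay, so it is not really heavier machinery). A minor remark: you only ever use the upper bound $\mathbb P_0(H_z<H_{KR})\le\mathbb P_0(H_z<\infty)\lesssim R^{2-d}$, so the two-sided claim $g_{KR}(0,z)\asymp R^{2-d}$, whose matching lower bound would need a short extra argument when $K=2$, is not actually required.
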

\begin{proof}
We let $(S_k)_{k\ge 0}$ be a simple random walk and write 
\begin{align*}
\mathbb E\big[\exp\big(\frac{\varepsilon \cdot \tau_z}{R^2}\big)\cdot \mathbf 1\{H_z < H_{KR}\} \big] & \le \sum_{k\ge 0} e^{\varepsilon (k+1)}\cdot  \mathbb P\big(H_z <H_{KR}, kR^2\le H_z < (k+1)R^2\big) \\ 
& \le \sum_{k\ge 0} e^{\varepsilon (k+1)}\cdot \sum_{t=kR^2}^{(k+1)R^2} \mathbb P(\max_{m\le t/2} \|S_m\|\le KR, S_t= z). 
\end{align*}
Then to compute the last probability, we use the Markov property at time $\lfloor t/2\rfloor$, and the facts that on one hand, $$\mathbb P(\max_{m\le \lfloor t/2\rfloor} \|S_m\|\le K R) \le \exp(-ct/R^2),$$
for some $c>0$ (depending on $K$), and on the other hand, uniformly over $y,z\in \mathbb Z^d$, and $t\ge 1$, one has for some constant $C>0$, 
$$\mathbb P_y(S_{\lceil t/2\rceil} =z) \le C\cdot t^{-d/2}.$$  
Altogether, this gives for $\varepsilon<c$,
$$\mathbb E\big[\exp\big(\frac{\varepsilon \cdot \tau_z}{R^2}\big)\cdot \mathbf 1\{H_z < H_{KR}\} \big]\lesssim R^{2-d} \sum_{k\ge 0} e^{\varepsilon (k+1)-ck}\lesssim R^{2-d},$$ 
as wanted. 
\end{proof}

\begin{proof}[Proof of Lemma~\ref{lem:main1}]
First of all, observe that it suffices to prove the result for a $\mu$-Bienaym\'e-Galton-Watson tree in the definition of $\mathcal N_{\mathcal M}^\delta$, instead of an adjoint tree, say with a factor $(\log L_0)^{3/4}$ in the upper bound instead of $\sqrt{\log L_0}$.  Indeed, the result for the latter follows from the former one, just by conditioning on the number of offspring of the root, and using that $\mu$ has some finite exponential moment.  So, with a slight abuse of notation, we assume now that in the definition of  $\mathcal N_{\mathcal M}^\delta$, the range of the random walk indexed by $\widetilde {\mathcal T_c}$ is replaced by the range of a critical branching random walk.

The overall strategy is to use an induction argument. Namely, we prove the result with $\mathcal N_{\mathcal M}^\delta(\cdot)\wedge k$, in place of $\mathcal N_{\mathcal M}^\delta(\cdot)$, by induction on $k\ge 1$ (where we use the notation $a\wedge b = \min(a,b)$). When $k=1$, we have for any $z\in \mathbb Z^d$, 
$$\mathbb E\Big[e^{\lambda \cdot \mathcal N_{\mathcal M}^\delta(z)\wedge 1}\Big] \le 1 + e^{\lambda} \cdot \mathbb P(\mathcal N_{\mathcal M}^\delta(z) \ge 1). 
$$ 
Thanks to Corollary~\ref{cor.hit}  and~\eqref{BCapSL0}, one has for any $i\ge 0$ and $z\in \mathcal S_i$, for some constant $C>0$ (that might depend on $\delta$),  
$$\mathbb P(\mathcal N_{\mathcal M}^\delta(z) \ge 1) \le C \sum_{j\ge i} \frac{2^j\cdot  \textrm{BCap}(S_F(0,L_0-1))}{L_j^{d-2}}\le \frac{C}{L_i^2\cdot 3^i\cdot \log(L_0)}.$$
We then take $L_0$ large enough, so that $(\log L_0)^{1/4}\ge Ce^{\lambda}$, and this gives the desired statement for $k=1$. We next prove the induction step. So given some $k\ge 1$, we assume that the statement holds for $\mathcal N_{\mathcal M}^\delta(\cdot )\wedge k$, and we prove it now for $\mathcal N_{\mathcal M}^\delta(\cdot)\wedge (k+1)$. One has for any $z\in \mathbb Z^d$, 
$$\mathbb E\Big[e^{\lambda \cdot \mathcal N_{\mathcal M}^\delta(z)\wedge (k+1)}\Big] \le 1 + e^{\lambda}\cdot \mathbb E\Big[e^{\lambda \cdot \mathcal N_{\mathcal M}^\delta(z)\wedge k}\cdot \mathbf 1\{\mathcal N_{\mathcal M}^\delta(z)\ge 1\}\Big]. $$ 
We next use that the law of a critical branching random walk, conditioned on hitting $\mathcal X_{\mathcal M}^F$ can be described explicitly in terms of the path of first visit, thanks to Proposition~\ref{Zhu.hit2}, and the remark following it. Given a path $\gamma:\{0,\dots,N\}\to \mathbb Z^d$, we let $\mathcal R(\gamma) = \{\gamma(\ell):0\le \ell \le N\}$ be its range, and $s(\gamma)$ the probability that a simple random walk follows the path $\gamma$. We also let $\mathcal T_-^\gamma$ be the union of the ranges of a sequence of independent random walks indexed by adjoint trees, starting from the points $\gamma(\ell)$, for each $\ell\ge 1$. A similar inequality as~\eqref{ineq:NMdelta} for $\mathcal N_{\mathcal M}^\delta(z)$ yields
\begin{align}\label{sumgamma}
\nonumber  \mathbb E\Big[e^{\lambda \cdot \mathcal N_{\mathcal M}^\delta(z)\wedge (k+1)}\Big] 
&  \le 1+e^\lambda  \sum_{\substack{m\in T_{(n)} : \\ d(z,\square_m)\ge \delta L_0}} \sum_{y\in \square_m} \sum_{\gamma : y\to z} s(\gamma)  \cdot \exp\big(\lambda\cdot  \mathcal N_{\mathcal M}^{\delta,*}(\mathcal R(\gamma))\big)\\
 & \quad \times \mathbb E\Big[\exp\Big(\lambda\sum_{\ell} \mathcal N_{\mathcal M}^\delta(\gamma(\ell))\wedge k\Big)\cdot \mathbf 1\{\mathcal T_-^\gamma\cap \mathcal X_{\mathcal M}^F = \emptyset\}\Big], 
\end{align}
where the third sum on the right hand side runs over paths $\gamma$ that go from $y$ to $z$ and avoid $\mathcal X_{\mathcal M}^F$, except at their starting point.

Set for $m\in T_{(n)}$
$$U_m^\delta = \{u\in \mathbb Z^d : d(u,\square_m)\ge \delta L_0/2\}.$$
For any $m\in T_{(n)}$, one has by using the same argument as in~\eqref{pastfuture} together with the induction hypothesis, for some constant $C>0$, 
$$\sum_{y\in \square_m} 
\sum_{\substack{\gamma : y \to \partial U_m^\delta \\ \gamma \subset (U_m^\delta)^c}} 
s(\gamma)\cdot  \mathbb E\Big[\exp\Big(\lambda\sum_{\ell} \mathcal N_{\mathcal M}^\delta(\gamma(\ell))\wedge k\Big)\cdot \mathbf 1\{\mathcal T_-^\gamma\cap \mathcal X_{\mathcal M}^F = \emptyset\}\Big]\le C\cdot \textrm{BCap}(\square_m), $$ 
where the second sum runs over paths $\gamma$ that remain in $(U_m^\delta)^c$ except at their last step where they reach $\partial U_m^\delta$. Using this in~\eqref{sumgamma} for the part of each path $\gamma$ until its first hitting time of $U_m^\delta$, forgetting about 
the indicator function appearing there for the remaining part of $\gamma$, and using the induction hypothesis, gives
\begin{align}\label{sumgamma2}
\nonumber  \mathbb E\Big[e^{\lambda \cdot \mathcal N_{\mathcal M}^\delta(z)\wedge (k+1)}\Big] 
 & \le 1+ Ce^\lambda   \sum_{\substack{m\in T_{(n)} : \\ d(z,\square_m)\ge \delta L_0}} \textrm{BCap}(\square_m)\cdot\sup_{w\in \partial U_m^\delta} \sum_{\gamma : w\to z} s(\gamma)  \cdot \exp\big(\lambda\cdot  \mathcal N_{\mathcal M}^{\delta,*}(\mathcal R(\gamma))\big)\\
 & \qquad \times 
 \exp\Big(\sum_{j\ge 0} \frac{\tau_j(\gamma)}{L_j^2 \cdot 3^j \cdot (\log L_0)^{3/4}} \Big), 
\end{align}
where for $j\ge 0$, we let $\tau_j(\gamma) = \sum_\ell \mathbf 1\{\gamma(\ell)\in\mathcal S_j\}$ be the time spent in $\mathcal S_j$ by $\gamma$.

Assume now that $z\in \mathcal S_i$, for some $i\ge 0$. We first treat the sum above over points $m\in T_{(n)}$, such that $d(z,\square_m)\le L_{i+2}$. Given such $m\in T_{(n)}$, $w\in \partial U_m^\delta$, and $\gamma$ some path from $w$ to $z$, we again cut it into two parts, one before its hitting time of $z$, and the other one after that time. Concerning the second part, note that 
\begin{align*}
& \sum_{\gamma : z \to z} s(\gamma)  \cdot \exp\Big(\lambda\cdot  \mathcal N_{\mathcal M}^{\delta,*}(\mathcal R(\gamma)) 
+\sum_{j\ge 0} \frac{\tau_j(\gamma)}{L_j^2 \cdot 3^j \cdot (\log L_0)^{3/4}} \Big) \\
& \le \mathbb E_z\Big[
 \exp\Big(\lambda\cdot  \mathcal N_{\mathcal M}^{\delta,*}(\mathcal R_\infty)
+\sum_{j\ge 0} \frac{\tau_j}{L_j^2 \cdot 3^j \cdot (\log L_0)^{3/4}}\Big)\cdot \mathcal L_\infty(z)\Big],
 \end{align*}
 where $\mathcal L_\infty(z)$ denotes the total time spent in $z$ by the simple random walk, $\mathcal R_\infty$ its range, and $\tau_j$ its time spent in $\mathcal S_j$. Hence using Cauchy-Schwarz inequality, together with Lemma~\ref{lem:main2} and~\eqref{NMdelta*}, we can see that this sum above is finite, bounded by a constant independent of $z$, provided $L_0$ is large enough. It remains now to handle the first part of paths $\gamma$ in~\eqref{sumgamma2}, up to their hitting time of $z$. Given some $\gamma$, let $\sigma_i(\gamma)$ be the last time $\gamma$ is in $\mathcal S_{i-3}\cup \mathcal S_{i+3}$ before hitting $z$, with the convention that $\mathcal S_j=\emptyset$, if $j<0$. Let also $\sigma'_i(\gamma)$ be either $0$, if $\sigma_i(\gamma) = 0$ (which can only happen if $i\le 2$ and $\gamma$ never enters $\mathcal S_{i+3}$ before hitting $z$), or the first time it enters $\mathcal S_{i-1}\cup \mathcal S_{i+1}$, if $\sigma_i(\gamma)>0$. The part of $\gamma$ after $\sigma'_i(\gamma)$ is handled using Lemma~\ref{lem:SRW}, which shows that if  $L_0$ is large enough, then in case when  $\sigma'_i(\gamma)>0$, for some constant $C>0$, 
 $$\sup_{u\in \partial_-S_{i-1}\cup \partial_+\mathcal S_{i+1}} \sum_{\substack{\gamma: u\to z\\ \gamma \subset \bigcup_{i-2\le j\le i+2} \mathcal S_j}} s(\gamma)\cdot \exp\Big(\sum_{j=i-2}^{i+2} \frac{\tau_j(\gamma)}{L_j^2 \cdot 3^j \cdot (\log L_0)^{3/4}} \Big) \le C L_i^{2-d}, $$
where the sum runs over paths that hit $z$ at their last step and remain in $\bigcup_{i-2\le j\le i+2} \mathcal S_j$, and where $\partial_-\mathcal S_{i-1}$ denotes the boundary of $\mathcal S_{i-1}$ that lies at the interface with $\mathcal S_{i-2}$ and similarly $\partial_+ \mathcal S_{i+1}$ is the boundary of $\mathcal S_{i+1}$ at the interface with $\mathcal S_{i+2}$. The case when $\sigma'_i(\gamma)=0$ is handled as well using Lemma~\ref{lem:SRW} (note furthermore that in this case $\mathcal N_{\mathcal M}^{\delta,*}(\mathcal R(\gamma))$ is bounded by a deterministic constant).

Concerning the part of $\gamma$ before time $\sigma'_i(\gamma)$ (in case it is positive), consider first the case when $\gamma(\sigma'_i(\gamma))\in \partial \mathcal S_{i-1}$. Then by concatenating $\gamma$ with an infinite nearest neighbor path that never hit $\mathcal S_{i-3}$ and diverges to infinity, we obtain a new infinite random walk trajectory, for which $\sigma'_i(\gamma)$ is the first hitting time of $\mathcal S_{i-1}$ after the last visit to $\mathcal S_{i-3}$. It follows that the corresponding sum over such paths is bounded by 
$$\frac 1{\inf_{u\in \partial_- S_{i-1}} \mathbb P_u(H_{\mathcal S_{i-3}}=\infty)} \mathbb  E_w\Big[
 \exp\Big(\lambda\cdot  \mathcal N_{\mathcal M}^{\delta,*}(\mathcal R_\infty)
+\sum_{j\ge 0} \frac{\tau_j}{L_j^2 \cdot 3^j \cdot (\log L_0)^{3/4}}\Big)\Big], 
$$
which we already saw is bounded by a constant, for $L_0$ sufficiently large. Consider next the case when $\gamma(\sigma'_i(\gamma))\in \partial \mathcal S_{i+1}$. By concatenating $\gamma$ with an infinite nearest neighbor path that diverges to infinity and never hit $\mathcal S_i$, we end up with a path for which $\sigma'_i(\gamma)$ becomes the first hitting time of $\mathcal S_{i+1}$, after the first hitting time of $\mathcal S_{i+3}$ after the last visit to $\mathcal S_i$. Hence, similarly as in the previous case, the corresponding sum is bounded by  
$$\frac 1{\inf_{u\in \partial_+ S_{i+1}} \mathbb P_u(H_{\mathcal S_i}=\infty)} \mathbb  E_w\Big[
 \exp\Big(\lambda\cdot  \mathcal N_{\mathcal M}^{\delta,*}(\mathcal R_\infty)
+\sum_{j\ge 0} \frac{\tau_j}{L_j^2 \cdot 3^j \cdot (\log L_0)^{3/4}}\Big)\Big], 
$$
which is bounded as well for $L_0$ large enough. Altogether, this shows that if $L_0$ is large enough, then for some $C>0$,  
\begin{equation}\label{borneLi}
  \sum_{\substack{m\in T_{(n)} : \\ \delta L_0 \le d(z,\square_m)\le L_{i+3}}} \sup_{w\in \partial U_m^\delta} \sum_{\gamma : w\to z} s(\gamma)  \cdot \exp\Big(\lambda\cdot  \mathcal N_{\mathcal M}^{\delta,*}(\mathcal R(\gamma))
+\sum_{j\ge 0} \frac{\tau_j(\gamma)}{L_j^2 \cdot 3^j \cdot (\log L_0)^{3/4}} \Big)\le C\cdot \frac{2^i}{L_i^{d-2}}.  
\end{equation} 
Finally, given some $\ell\ge i+2$, we treat the sum over elements $m\in T_{(n)}$, such that $L_\ell\le d(z,\square_m)\le L_{\ell+1}$. For such $m$, some $w\in \partial U_m^\delta$, and some path $\gamma$ from $w$ to $z$, we consider  $t_\ell(\gamma)$, the last time $\gamma$ visits $\mathcal S_{\ell-1}$. Looking at the time inverse $\stackrel{\leftarrow}{\gamma}$ of $\gamma$ (meaning the same path but traveling backward from $z$ to $w$), $t_\ell(\gamma)$ becomes the first time $\stackrel{\leftarrow}{\gamma}$ hits $\mathcal S_{\ell - 1}$. Then we have to control the sum over paths from $w$ up to some
fixed point $z'\in \partial S_{\ell -1}$, which can be done exactly using the previous argument, and paths from $z$ up to their hitting time of $\mathcal S_{\ell-1}$, which  is bounded by 
$$\mathbb  E_z\Big[
 \exp\Big(\lambda\cdot  \mathcal N_{\mathcal M}^{\delta,*}(\mathcal R_\infty)
+\sum_{j\ge 0} \frac{\tau_j}{L_j^2 \cdot 3^j \cdot (\log L_0)^{3/4}}\Big)\Big].$$
Altogether this shows that for any $\ell\ge i+2$, 
\begin{equation}\label{borneLj}
  \sum_{\substack{m\in T_{(n)} : \\ L_\ell \le d(z,\square_m)\le L_{\ell+1}}} \sup_{w\in \partial U_m^\delta} \sum_{\gamma : w\to z} s(\gamma)  \cdot \exp\Big(\lambda\cdot  \mathcal N_{\mathcal M}^{\delta,*}(\mathcal R(\gamma))
+\sum_{j\ge 0} \frac{\tau_j(\gamma)}{L_j^2 \cdot 3^j \cdot (\log L_0)^{3/4}} \Big)\le C\cdot \frac{2^\ell}{L_\ell^{d-2}}.  
\end{equation}
Combining~\eqref{BCapSL0},~\eqref{sumgamma2},~\eqref{borneLi},  and~\eqref{borneLj}, and taking again larger $L_0$ if necessary, concludes the proof of the induction step and of the lemma.  
\end{proof}

\begin{proof}[Proof of Lemma~\ref{lem:main2}]
We show that for $\varepsilon>0$ small enough,
$$\sup_{z\in \mathbb Z^d} \sup_{I\ge 1} \mathbb E_z\Big[\exp\big(\varepsilon
\sum_{i= 0}^I \frac{\tau_i}{L_i^2\cdot 3^i}\big)\Big] <\infty,$$
which will prove the result by monotone convergence. Using successively Cauchy-Schwarz inequality, and then Jensen's inequality gives 
$$\mathbb E_z\Big[\exp\big(\varepsilon
\sum_{i= 0}^I \frac{\tau_i}{L_i^2\cdot 3^i}\big)\Big] \le \prod_{i=0}^I \mathbb E_z\Big[\exp(\varepsilon\cdot \frac{\tau_i \cdot 2^{i+1}}{L_i^2 \cdot 3^i}\big)\Big]^{1/2^{i+1}}\le \prod_{i=0}^I \mathbb E_z\Big[\exp(\varepsilon\cdot \frac{\tau_i}{L_i^2}\big)\Big]^{1/3^i}. $$ 
Hence it suffices to show that for $\varepsilon>0$ small enough, 
$$\sup_{z\in \mathbb Z^d} \sup_{i\ge 0} \mathbb E_z\Big[\exp(\varepsilon\cdot \frac{\tau_i}{L_i^2}\big)\Big]<\infty. $$
In fact the result holds even with $\widetilde \tau_i = \tau_0+\dots+\tau_i$ instead of $\tau_i$ in the exponential. Indeed, note that for any $i\ge 0$, $\widetilde \tau_i$ is a sum of the times spent in $\cup_{j\le i}\mathcal S_j$, during excursions from the boundary of $\cup_{j\le i} \mathcal S_j$, up to the boundary of $\mathcal S_{i+K}$, with $K$ some fixed constant taken large enough later. During each of these excursions, it is well known that the time spent in $\cup_{j\le i} \mathcal S_j$ divided by $L_i^2$ has some finite exponential moment, uniformly in the starting point and in $i\ge 0$ (but with $K$ fixed). So in fact we just need to show that the number of excursions has a finite exponential moment. However, a union bound over all boxes forming $\cup_{j\le i} \mathcal S_j$, shows that starting from any point on $\partial \mathcal S_{i+K}$, the probability to ever hit one of those is uniformly bounded by a constant smaller than one, at least if $K$ is taken large enough. Hence the number of excursions is stochastically bounded by a Geometric random variable, and as such it has indeed some finite exponential moment. This concludes the proof of the lemmma. 
\end{proof}
\section{Proof of Theorem~\ref{thm.cover}}
\label{sec:thm.cover}
We follow carefully the proof of Belius~\cite{Bel12}. For $x\in \mathbb Z^d$, we let $U_x =\inf\{u\ge 0 : x\in \mathcal I^u\}$, so that for $K\subset \mathbb Z^d$, $M(K) = \max \{U_x : x\in K\}$. 
The first step is the following lemma. 

\begin{lem}\label{lem.Gumbel}
Let $\lambda>0$. There exists $c>0$, such that the following holds. Let $K\subset \mathbb Z^d$, finite and nonempty, for which $\|x-y\|\ge |K|^{(2+\lambda)/(d-4)}$, for all distinct $x,y\in K$. Then for any $u\ge 0$, 
$$|\mathbb P(M(K)\le u) - \mathbb P(U_0\le u)^{|K|}| \le c u |K|^{-\lambda}. $$  
\end{lem}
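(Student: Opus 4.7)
The plan is to reduce the problem to showing that the events $\{x\in\mathcal I^u\}$, for $x\in K$, are approximately independent when the points of $K$ are well separated, and then to exploit Theorem~\ref{thm.cov} with one small and one large set to control the correlations. First I would rewrite both sides. Since $M(K)\le u$ iff $U_x\le u$ for every $x\in K$, we have $\mathbb P(M(K)\le u)=\mathbb P(\bigcap_{x\in K}\{x\in\mathcal I^u\})$. By translation invariance (Proposition~\ref{cor.ergodic}) together with~\eqref{def.Iu1}, $\mathbb P(U_x\le u)=\mathbb P(x\in\mathcal I^u)=1-e^{-u\,\rm{BCap}(\{0\})}=\mathbb P(U_0\le u)$ for every $x$, so $\mathbb P(U_0\le u)^{|K|}=\prod_{x\in K}\mathbb P(x\in\mathcal I^u)$.

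Enumerate $K=\{x_1,\dots,x_N\}$ with $N=|K|$, and write $E_k=\{x_k\in\mathcal I^u\}$. I would then telescope
\[
\mathbb P\Big(\bigcap_{j=1}^N E_j\Big)-\prod_{j=1}^N\mathbb P(E_j)=\sum_{k=1}^N\Big(\prod_{j<k}\mathbb P(E_j)\Big)\Big[\mathbb P\Big(\bigcap_{j\ge k}E_j\Big)-\mathbb P(E_k)\,\mathbb P\Big(\bigcap_{j>k}E_j\Big)\Big],
\]
so the difference is controlled by $\sum_{k=1}^N|\mathrm{Cov}(\mathbf 1_{E_k},\mathbf 1_{\bigcap_{j>k}E_j})|$. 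For each such term I would apply Theorem~\ref{thm.cov} with $K_1=\{x_k\}$ and $K_2=\{x_{k+1},\dots,x_N\}$. Since $\mathrm{diam}(K_1)=1$ and $\mathrm{BCap}(\{x_k\})=\mathrm{BCap}(\{0\})$ is a constant, while by subadditivity $\mathrm{BCap}(K_2)\le (N-k)\,\mathrm{BCap}(\{0\})\lesssim N$, and the separation hypothesis gives $\mathrm{dist}(K_1,K_2)\ge D:=|K|^{(2+\lambda)/(d-4)}$, the covariance bound yields
\[
|\mathrm{Cov}(\mathbf 1_{E_k},\mathbf 1_{\bigcap_{j>k}E_j})|\le Cu\Big\{\frac{N}{D^{d-4}}+N\,e^{-cD}\Big\}=Cu\Big\{\frac{1}{N^{1+\lambda}}+N\,e^{-cD}\Big\}.
\]

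Summing over $k\in\{1,\dots,N\}$ gives $Cu\{N^{-\lambda}+N^2e^{-cD}\}$; because $D$ is polynomial in $N$, the second term is $o(N^{-\lambda})$ for any $\lambda>0$ and is absorbed into the first, proving the claim. The key technical point I expect to be most delicate is just the bookkeeping that lets us apply Theorem~\ref{thm.cov} with $K_1$ a singleton, which makes the asymmetric error term $\mathrm{BCap}(K_2)\,e^{-c\,\mathrm{dist}/\mathrm{diam}}$ negligible thanks to $\mathrm{diam}(K_1)=1$; this is exactly the regime in which the asymmetry in the statement of Theorem~\ref{thm.cov} pays off, and no further surgery is required.
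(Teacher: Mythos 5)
Your proof is correct and follows essentially the same route as the paper: peel off one point at a time (by telescoping, which is the same as the paper's induction), bound each resulting covariance by applying Theorem~\ref{thm.cov} with the singleton $\{x_k\}$ as $K_1$ so that the exponential term is negligible, and sum the $\lesssim u|K|^{-1-\lambda}$ contributions over the $|K|$ steps. The paper states this more tersely as an induction on $|K|$, but the underlying estimate and the choice $K_1=\{x\}$, $K_2=K\setminus\{x\}$ are the same.
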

\begin{proof}
Using theorem~\ref{thm.cov}, we get that for any $x\in K$, 
$$|\mathbb P(M(K)\le u) - \mathbb P(M(K\setminus \{x\}) \le u)\cdot \mathbb P(U_x\le u)| \lesssim  u |K|^{-1-\lambda}. $$ 
The proof follows by using an induction on the cardinality of $K$ (and the fact that $U_x$ and $U_0$ have the same law, for all $x$). 
\end{proof}

Now for $K\subset \mathbb Z^d$ finite and nonempty, and $\varepsilon>0$, let 
$$K_\varepsilon = \Big\{x\in K : U_x\ge \frac{(1-\varepsilon)}{\rm{BCap}(\{0\})}\log |K|\Big\}.$$
\begin{lem}\label{lem.sumKeps}
There exists $\varepsilon_0>0$, such that for any $\varepsilon\in (0,\varepsilon_0)$ the following holds. There exists $c>0$, such that for any $K\subset \mathbb Z^d$ finite and nonempty, 
\begin{equation}\label{Keps1}
\sum_{x\neq y} \mathbb P(x,y\in K_\varepsilon) \le 
|K|^{2\varepsilon} + c|K|^{-\varepsilon},
\end{equation}
and for any $\eta\in (3\varepsilon,1)$, 
\begin{equation}\label{Keps2}
\sum_{0<\|x-y\|<|K|^{(1-\eta)/d}} \mathbb P(x,y\in K_\varepsilon) \le c\,  |K|^{- \varepsilon}. 
\end{equation}
\end{lem}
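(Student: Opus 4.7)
Writing $u_\varepsilon := (1-\varepsilon)\log|K|/\textrm{BCap}(\{0\})$, the definition of $K_\varepsilon$ together with~\eqref{def.Iu1} yields
\[
\mathbb P(x,y\in K_\varepsilon)=\mathbb P\bigl(\{x,y\}\cap\mathcal I^{u_\varepsilon}=\emptyset\bigr)=\exp\bigl(-u_\varepsilon\,\textrm{BCap}(\{x,y\})\bigr).
\]
The plan is to combine the two lower bounds on $\textrm{BCap}(\{x,y\})$ already in hand. Lemma~\ref{strict.bcap} gives the uniform bound $\textrm{BCap}(\{x,y\})\ge(1+\delta)\textrm{BCap}(\{0\})$ for all $x\ne y$, hence $\mathbb P(x,y\in K_\varepsilon)\le|K|^{-(1-\varepsilon)(1+\delta)}$; while the asymptotic in Lemma~\ref{lem.Bcap0x} provides a constant $C>0$ and a radius $R^*$ such that $2\textrm{BCap}(\{0\})-\textrm{BCap}(\{x,y\})\le C/\|x-y\|^{d-4}$ for $\|x-y\|\ge R^*$, yielding $\mathbb P(x,y\in K_\varepsilon)\le|K|^{-2(1-\varepsilon)}\exp(Cu_\varepsilon/\|x-y\|^{d-4})$ in that regime.

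I would introduce the threshold $R_0:=(Cu_\varepsilon|K|^{3\varepsilon})^{1/(d-4)}\asymp(\log|K|)^{1/(d-4)}\cdot|K|^{3\varepsilon/(d-4)}$, which is $\ge R^*$ for $|K|$ large enough (the bound being trivial otherwise), and split every sum at $R_0$. For far pairs ($\|x-y\|\ge R_0$), $Cu_\varepsilon/\|x-y\|^{d-4}\le|K|^{-3\varepsilon}\le 1$, so $e^a\le 1+2a$ gives the exponential factor $1+O(|K|^{-3\varepsilon})$; summing over at most $|K|^2$ ordered pairs produces
\[
\sum_{\|x-y\|\ge R_0}\mathbb P(x,y\in K_\varepsilon)\le|K|^{2\varepsilon}\bigl(1+O(|K|^{-3\varepsilon})\bigr)=|K|^{2\varepsilon}+O(|K|^{-\varepsilon}).
\]
For near pairs ($0<\|x-y\|<R_0$), the volume count $|K|\cdot R_0^d\lesssim|K|^{1+3\varepsilon d/(d-4)}\cdot(\log|K|)^{d/(d-4)}$ combined with the uniform probability bound contributes at most
\[
|K|^{-\delta+\varepsilon(1+\delta+3d/(d-4))}\cdot(\log|K|)^{d/(d-4)}.
\]
Taking $\varepsilon_0<\delta/(2+\delta+3d/(d-4))$ forces the exponent to be strictly less than $-\varepsilon$, which absorbs the polylogarithmic factor into a term bounded by $c|K|^{-\varepsilon}$. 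Adding the two contributions proves~\eqref{Keps1}.

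For~\eqref{Keps2} the same split applies, with the outer restriction $\|x-y\|<|K|^{(1-\eta)/d}$ now in force. The near contribution ($\|x-y\|<R_0$) is handled as above. For intermediate pairs ($R_0\le\|x-y\|<|K|^{(1-\eta)/d}$) the exponential factor is at most $e$, so $\mathbb P(x,y\in K_\varepsilon)\le e|K|^{-2(1-\varepsilon)}$; the number of such pairs being at most $|K|\cdot|K|^{1-\eta}=|K|^{2-\eta}$, their contribution is $\lesssim|K|^{2\varepsilon-\eta}\le|K|^{-\varepsilon}$ precisely under the hypothesis $\eta\ge3\varepsilon$. (If $|K|^{(1-\eta)/d}\le R_0$ the intermediate range is empty and only the near contribution survives, still bounded by $c|K|^{-\varepsilon}$.)

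The main delicate step is the balance fixing $R_0$: it must be large enough that the exponential correction on far pairs is of the form $1+O(|K|^{-3\varepsilon})$ (forcing $R_0^{d-4}$ to be at least of order $u_\varepsilon|K|^{3\varepsilon}$), and small enough that the near-pair volume $R_0^d$ is beaten by the strict improvement $(1+\delta)\textrm{BCap}(\{0\})$ of Lemma~\ref{strict.bcap} (which is what forces $\varepsilon_0$ small in terms of $\delta$ and $d$). The resulting power $3\varepsilon/(d-4)$ appearing in $R_0$ is precisely what dictates the hypothesis $\eta>3\varepsilon$ in~\eqref{Keps2}.
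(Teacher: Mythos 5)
Your proof is correct and relies on the same two ingredients as the paper's: the strict inequality of Lemma~\ref{strict.bcap} for near pairs and the asymptotic of Lemma~\ref{lem.Bcap0x} for far pairs, with a split of the sum at a threshold. The only genuine difference is the choice of threshold. The paper splits at $(\log|K|)^2$ and then, for the far pairs, must sum the linearization error $c\log|K|/\|x-y\|^{d-4}$ over $y\in K$, which requires the bound $\sum_{y\in K}\|x-y\|^{-(d-4)}\lesssim|K|^{4/d}$ (a ball-rearrangement estimate). You instead take $R_0\asymp(\log|K|\cdot|K|^{3\varepsilon})^{1/(d-4)}$, which makes the linearization error uniformly $\le|K|^{-3\varepsilon}$; you can then crudely multiply by $|K|^2$ ordered pairs, avoiding the rearrangement argument entirely. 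The price is a larger near-pair volume $|K|R_0^d$, but the margin $\delta$ from Lemma~\ref{strict.bcap} still absorbs it provided $\varepsilon_0<\delta/(2+\delta+3d/(d-4))$ --- a slightly more restrictive choice of $\varepsilon_0$ than what the paper needs, but immaterial for the statement. The paper also packages both claims via a single intermediate estimate (its~\eqref{Keps3}) whereas you treat them separately; the content is the same, and your explicit treatment makes the role of the hypothesis $\eta>3\varepsilon$ transparent.
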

\begin{proof}
It suffices to prove that for any $a>0$, 
\begin{equation}\label{Keps3}
\sum_{0<\|x-y\|<a }\mathbb P(x,y\in K_\varepsilon) \le \min\{|K|^{2\varepsilon}, c|K|^{2\varepsilon-1}a^d\} + c|K|^{-\varepsilon}.
\end{equation}
Indeed, once this is proved, then~\eqref{Keps2} and~\eqref{Keps1}  follow respectively by taking $a=|K|^{(1-\eta)/d}$, and letting $a\to \infty$. We split the sum in~\eqref{Keps3} in two parts: 
$$ I_1 = \sum_{0<\|x-y\|<(\log |K|)^2}\mathbb P(x,y\in K_\varepsilon)\qquad {\rm and}\qquad I_2 = \sum_{(\log |K|)^2\le \|x-y\|<a }\mathbb P(x,y\in K_\varepsilon). $$ 
Using Lemma~\ref{strict.bcap}, we get with $\delta$ from therein and $\varepsilon$ small enough,  
$$I_1 \lesssim (\log |K|)^{2d}\cdot |K|^{1 - (1-\varepsilon) \frac{{\rm BCap}(\{x,y\})}{{\rm BCap}(\{0\})}} \lesssim (\log |K|)^{2d}\cdot |K|^{1 - (1-\varepsilon)(1+\delta)}\lesssim |K|^{-\varepsilon}.  $$
Using now Lemma~\ref{lem.Bcap0x}, one can see that for $\varepsilon$ small enough, and some $c>0$,  
\begin{align*}
I_2 & \le |K|^{-2(1-\varepsilon)} \sum_{x,y\in K:(\log |K|)^2\le \|x-y\|<a} (1+ c\frac{\log |K|}{\|x-y\|^{d-4}}) \\
& \le \min\{|K|^{2\varepsilon}, c|K|^{2\varepsilon-1}a^d\} + c(\log |K|)\cdot  |K|^{2\varepsilon-1+4/d}\\
& \le \min\{|K|^{2\varepsilon}, c|K|^{2\varepsilon-1}a^d\} + c|K|^{-\varepsilon},
\end{align*}
concluding the proof of~\eqref{Keps3}. 
\end{proof}
Now for $K\subset \mathbb Z^d$, finite and nonempty and $\varepsilon>0$, we define a set of good subsets of $K$ as  
$$G_{K,\varepsilon} = \Big\{U\subset K : \big||U| - |K|^\varepsilon\big| \le |K|^{2\varepsilon/3}, U\neq \emptyset, \text{ and } \|x-y\|>|K|^{\frac{4\varepsilon}{d-4}} \ \forall x\neq y\in U\Big\}. $$ 
\begin{lem}\label{lem.goodKeps}
There exists $\varepsilon_0>0$, such that for any $K\subset \mathbb Z^d$, finite and nonempty, and any $\varepsilon \in (0,\varepsilon_0)$, one has for some $c>0$, 
$$\mathbb P(K_\varepsilon \notin G_{K,\varepsilon}) \le c|K|^{-\varepsilon/3}. $$
\end{lem}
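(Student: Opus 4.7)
The plan is a standard first- and second-moment argument on $|K_\varepsilon|$ combined with a union bound for the minimum-separation condition, using Lemma~\ref{lem.sumKeps} as the main input.

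First I compute the one-point probability: since $\mathbb P(U_x>u)=\exp(-u\cdot\textrm{BCap}(\{0\}))$ by~\eqref{def.Iu1}, one gets $\mathbb P(x\in K_\varepsilon)=|K|^{\varepsilon-1}$ for every $x$, hence $\mathbb E[|K_\varepsilon|]=|K|^\varepsilon$. For the variance, I write
\[
\textrm{Var}(|K_\varepsilon|)=\sum_{x\in K}\mathbb P(x\in K_\varepsilon)(1-\mathbb P(x\in K_\varepsilon))+\sum_{x\neq y}\bigl(\mathbb P(x,y\in K_\varepsilon)-\mathbb P(x\in K_\varepsilon)\mathbb P(y\in K_\varepsilon)\bigr).
\]
The first sum is bounded by $|K|\cdot|K|^{\varepsilon-1}=|K|^\varepsilon$. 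For the second, I invoke~\eqref{Keps1}: since $\sum_{x\neq y}\mathbb P(x,y\in K_\varepsilon)\le|K|^{2\varepsilon}+c|K|^{-\varepsilon}$ and $\sum_{x\neq y}\mathbb P(x\in K_\varepsilon)\mathbb P(y\in K_\varepsilon)\le|K|^{2\varepsilon}$, the covariance sum is at most $c|K|^{-\varepsilon}$. Thus $\textrm{Var}(|K_\varepsilon|)\le |K|^\varepsilon+c|K|^{-\varepsilon}$, and Chebyshev's inequality yields
\[
\mathbb P\bigl(\bigl||K_\varepsilon|-|K|^\varepsilon\bigr|>|K|^{2\varepsilon/3}\bigr)\le\frac{|K|^\varepsilon+c|K|^{-\varepsilon}}{|K|^{4\varepsilon/3}}\lesssim |K|^{-\varepsilon/3}.
\]
On the complementary event the cardinality condition in the definition of $G_{K,\varepsilon}$ is satisfied, and in particular $K_\varepsilon\neq\emptyset$ provided $|K|$ is larger than some absolute constant (which can always be ensured by enlarging $c$).

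It remains to control the minimum-separation condition. Choose $\eta\in(3\varepsilon,1)$ small enough (which forces $\varepsilon$ small, since one needs $\frac{4\varepsilon}{d-4}<\frac{1-\eta}{d}$, compatible as $\varepsilon\to 0$ because $d\ge 5$) so that $|K|^{4\varepsilon/(d-4)}\le|K|^{(1-\eta)/d}$. Then by Markov's inequality and~\eqref{Keps2}
\[
\mathbb P\bigl(\exists\,x\neq y\in K_\varepsilon:\|x-y\|\le|K|^{4\varepsilon/(d-4)}\bigr)\le\sum_{0<\|x-y\|\le|K|^{4\varepsilon/(d-4)}}\mathbb P(x,y\in K_\varepsilon)\lesssim |K|^{-\varepsilon}.
\]

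A union bound over these two bad events gives $\mathbb P(K_\varepsilon\notin G_{K,\varepsilon})\lesssim |K|^{-\varepsilon/3}$, as claimed. The only delicate point in the plan is the choice of $\varepsilon_0$: it must be small enough both to apply~\eqref{Keps1} and~\eqref{Keps2} (via Lemma~\ref{lem.sumKeps}), and to ensure compatibility of the exponents $\tfrac{4\varepsilon}{d-4}$ and $\tfrac{1-\eta}{d}$; but these are all open conditions satisfied for all sufficiently small $\varepsilon>0$, uniformly in $d\ge 5$.
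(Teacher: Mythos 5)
Your argument follows the paper's proof exactly: compute $\mathbb E[|K_\varepsilon|]=|K|^\varepsilon$, apply Chebyshev together with~\eqref{Keps1} for the cardinality condition, and a union bound with~\eqref{Keps2} for the separation condition (after checking the exponent compatibility $\tfrac{4\varepsilon}{d-4}\le\tfrac{1-\eta}{d}$). One small imprecision worth fixing: to bound $\sum_{x\neq y}\bigl(\mathbb P(x,y\in K_\varepsilon)-\mathbb P(x\in K_\varepsilon)\mathbb P(y\in K_\varepsilon)\bigr)$ from above you cannot simply upper-bound both sums, since a difference requires a \emph{lower} bound on the subtrahend; here $\sum_{x\neq y}\mathbb P(x\in K_\varepsilon)\mathbb P(y\in K_\varepsilon)=|K|^{2\varepsilon}-|K|^{2\varepsilon-1}$, so the covariance sum is $\le c|K|^{-\varepsilon}+|K|^{2\varepsilon-1}\lesssim |K|^{-\varepsilon}$ once $\varepsilon\le 1/3$, and the rest of your argument is unaffected.
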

\begin{proof}
Note that 
$$\mathbb E[|K_\varepsilon|] = |K|\cdot \mathbb P\Big(U_0\ge \frac{(1-\varepsilon)}{{\rm BCap}(\{0\})} \log |K|\Big) =|K|^\varepsilon.$$ 
Thus, it follows from Chebyshev's inequality and~\eqref{Keps1} that 
$$\mathbb P\big(\big||K_\varepsilon| - |K|^\varepsilon\big|>|K|^{2\varepsilon/3}\big) \lesssim |K|^{-\varepsilon/3}. $$ 
Furthermore, a union bound and~\eqref{Keps2} yield
$$\mathbb P\big(\exists x\neq y\in K_\varepsilon: \|x-y\| \le |K|^{\frac{4\varepsilon}{d-4}}
\big) \lesssim |K|^{-\varepsilon},$$
provided $\varepsilon$ is small enough. 
\end{proof}
One can now conclude the proof. 
\begin{proof}[Proof of Theorem~\ref{thm.cover}]
Let $u_K(z) =  \frac{z+ \log |K|}{{\rm BCap}(\{0\})}$. 
If $z\le - (\varepsilon/2) \log |K|$, then by Lemma~\ref{lem.goodKeps}, 
$$\mathbb P(M(K) \le u_K(z) ) \le \mathbb P(K_{\varepsilon/2} = \emptyset) \le \mathbb P(K_{\varepsilon/2}\notin G_{K,\varepsilon/2}) \lesssim K^{-\varepsilon/2},$$
and $$\exp(-e^{-z}) \le \exp(- |K|^{-\varepsilon/2})\lesssim |K|^{-\varepsilon}.$$
Likewise, if $z\ge \log |K|$, then 
$$\mathbb P(M(K)> u_K(z)) \lesssim |K|^{-1},$$
and 
$$\exp(-e^{-z})\ge \exp (-|K|^{-1}) \ge 1 - |K|^{-1}. $$  
Hence it just remains to consider the case $z\in [-(\varepsilon/2)\log |K|,\log |K|]$. By Lemma~\ref{lem.goodKeps}, one may assume that $K_\varepsilon \in G_{K,\varepsilon}$, and condition on the event $\{K_\varepsilon = U\}$, for some $U\in G_{K,\varepsilon}$. Note also that the Branching Interlacements point process between levels $u_1:=\tfrac{(1-\varepsilon)}{{\rm BCap}(\{0\})} \log |K|$ and $u_K(z)$ is independent of $\mathcal I^{u_1}$ and distributed as $\mathcal I^{u_2}$, where $u_2 = u_K(z) - u_1 = \tfrac{z+\varepsilon \log |K|}{{\rm BCap}(\{0\})}$. Therefore, it just amounts to show that for any fixed $U\in G_{K,\varepsilon}$,
\begin{equation*}\label{goal.thmcover}
|\mathbb P(M(U) \le u_2)- \exp(-e^{-z})|\lesssim |K|^{-\varepsilon},
\end{equation*}
which follows directly from Lemma~\ref{lem.Gumbel}, and the fact that  $|(1-\tfrac{v}{n})^n - e^{-v}|\lesssim 1/n$, uniformly in $v\in [0,1]$. 
\end{proof}

\textbf{Acknowledgments:} Many thanks to Amine Asselah for our numerous discussions around this project, which in particular led to the proofs of Theorems~\ref{thm.cov} and~\ref{thm.cover}. I also thank Bal\'azs R\'ath for comments on a first version of this paper.

\end{document}